\theoremstyle{plain}
\newtheorem{theorem}{Theorem}[section]
\newtheorem{lemma}[theorem]{Lemma}
\newtheorem{prop}[theorem]{Proposition}
\theoremstyle{definition}
\newtheorem{rem}[theorem]{Remark}
\newtheorem{ipos}[theorem]{Hypotheses}
\newtheorem{definition}[theorem]{Definition}
\newcommand{\no}{\nonumber}
\newcommand{\hs}[1]{\hskip -#1pt}
\def\po{{p_0}}
\def\timeT{{T_0}}
\def\konst{{\mathbf{a}}}
\newcommand{\Tr}{\mathop{\mathrm{Tr}}}
\newcommand{\II}{\mathscr{I}}
\newcommand{\qq}{\qquad}
\newcommand{\q}{\quad}
\newcommand\ov{\overline}
\newcommand\va{\varphi}
\newcommand\pn{\par\noindent}
\newcommand\med{\medskip}
\newcommand\ds{\displaystyle}
\newcommand{\ve}{\varepsilon}
\newcommand{\Om}{\Omega}
\newcommand\R{{\mathbb R}}
\newcommand\N{{\mathbb N}}
\begin{document}
\numberwithin{equation}{section}
\title[On a class of hypoelliptic operators]{On a class of  hypoelliptic operators with unbounded coefficients in $\R^N$}

\author{Balint Farkas}
\address{Technische Universit\"at Darmstadt\newline Fachbereich Mathematik\newline Schlo\ss{}gartenstra\ss{}e 7, D-64289 Darmstadt, Germany}
\email{farkas@mathematik.tu-darmstadt.de}
\urladdr{www.mathematik.tu-darmstadt.de/\~{}farkas/}
\author{Luca Lorenzi}
\address{Dipartimento di Matematica\newline Universit\`a degli Studi di Parma\newline Viale G.P. Usberti 53/A, I-43100 Parma, Italy} \email{luca.lorenzi@unipr.it}
\urladdr{www.unipr.it/\~{}lorluc99/index.html}
\thanks{Work supported by the M.I.U.R. research projects
Prin 2004 and 2006 ``Kolmogorov equations''.}

\subjclass[2000]{35K65, 35J70, 35B65, 35K15} \keywords{degenerate
elliptic operators with unbounded coefficients in $\R^N$, uniform
estimates, distributional solutions to elliptic and parabolic
problems, Schauder estimates}
\thanks{Work partially supported by the
research  project ``Kolmogorov equations'' of the Ministero
dell'Istruzione, dell'Universit\`a e della Ricerca (M.I.U.R.) and by
the European Community's Human Potential Programme under contract
HPRN-CT-2002-00281 ``Evolution Equations".}
\thanks{The second author wishes to thank the Department of Mathematics at the Technische Universit\"at of
Darmstadt for the warm hospitality during his visit.}
\date{February 28, 2008}
\begin{abstract}
We consider a class of non-trivial perturbations ${\mathscr A}$ of
the degenerate Ornstein-Uhlenbeck operator in $\R^N$. In fact we
perturb both the diffusion  and the drift part of the operator (say
$Q$ and $B$) allowing the diffusion part to be unbounded in $\R^N$.
Assuming that the kernel of the matrix $Q(x)$ is invariant with
respect to $x\in\R^N$ and the Kalman rank condition is satisfied at
any $x\in\R^N$ by the same $m<N$, and developing a revised version
of Bernstein's method we prove that we can associate a semigroup
$\{T(t)\}$ of bounded operators (in the space of bounded and
continuous functions) with the operator ${\mathscr A}$. Moreover, we
provide several uniform estimates for the spatial derivatives of the
semigroup $\{T(t)\}$ both in isotropic and anisotropic spaces of
(H\"older-) continuous functions. Finally, we prove Schauder
estimates for some elliptic and parabolic problems associated with
the operator ${\mathscr A}$.
\end{abstract}
\maketitle


\tableofcontents
\section{Introduction}
In the last decades the interest towards elliptic and parabolic
operators with unbounded coefficients in unbounded domains has grown
considerably due to their applications to stochastic analysis and
mathematical finance.

The literature on uniformly elliptic operators with unbounded
coefficients in $\R^N$ is nowadays rather complete (we refer the
interested reader, e.g., to \cite{bertoldi-lorenzi}). The picture
changes drastically when one considers degenerate elliptic operators
with unbounded coefficients. The prototype of such operators is the
degenerate Ornstein-Uhlenbeck operator defined on smooth functions
by
\begin{equation}
{\mathscr
A}\va(x)=\sum_{i,j=1}^Nq_{ij}D_{ij}\va(x)+\sum_{i,j=1}^Nb_{ij}x_jD_i\va(x),\qq\;\,x\in\R^N,
\label{O-U-deg}
\end{equation}
\pn where $Q=(q_{ij})$ and $B=(b_{ij})$ are suitable square matrices
such that $Q$ is singular and the condition ${\rm det}\,Q_t>0$ is nevertheless
satisfied for any $t>0$. Here,
\begin{equation*}
Q_t=\int_0^te^{sB}Qe^{sB^*}ds,\qq\;\,t>0.
\end{equation*}
\pn The condition ${\rm det}\,Q_t>0$ is equivalent to the well-known
\emph{Kalman rank condition} which requires that
\begin{equation}
{\rm
rank}[Q^{\frac{1}{2}},BQ^{\frac{1}{2}},\ldots,B^{m}Q^{\frac{1}{2}}]=N,
\label{kalman}
\end{equation}
\pn for some $m<N$. In particular, ${\mathscr A}$ is hypoelliptic in
H\"ormander's sense.

A suitable change of the orthonormal basis of $\R^N$ (see Remark
\ref{rem:2}) allows to rewrite the operator ${\mathscr A}$ on smooth
functions $\varphi$ as
\begin{equation}
{\mathscr A}\va(x)=\sum_{i,j=1}^\po\hat
q_{ij}D_{ij}\va(x)+\sum_{i,j=1}^N\hat
b_{ij}x_jD_i\va(x),\qq\;\,x\in\R^N, \label{O-U-deg-1}
\end{equation}
\pn for some positive definite and not singular $\po\times \po$ matrix
$\hat Q=(\hat q_{ij})$ and some $\po\in\{1,\ldots,N-1\}$.

In \cite{L} Lunardi  proves that one can associate a semigroup of
bounded operators $\{T(t)\}$ in $C_b(\R^N)$ (the space of all
bounded and continuous functions) with the operator ${\mathscr A}$
in a natural way, i.e., for any $f\in C_b(\R^N)$, $T(t)f$ is the
value at $t>0$ of the (unique) classical solution to the homogeneous
Cauchy problem
\begin{equation}
\left\{
\begin{array}{lll}
D_tu(t,x)={\mathscr A}u(t,x), & t\in ]0,+\infty[, &x\in\R^N,\\[2mm]
u(0,x)=f(x), && x\in\R^N,
\end{array}
\right. \label{pbhom-intro}
\end{equation}
\pn where by classical solution we mean a function $u$ which  (i) is
once continuously differentiable with respect to the time variables
and twice continuously differentiable with respect to the spatial
variable in $]0,+\infty[\times\R^N$, (ii) is continuous in
$[0,+\infty[\times\R^N$ and bounded in $[0,\timeT]\times\R^N$ for
any $\timeT>0$ and (iii) solves \eqref{pbhom-intro}.

One of the main peculiarities of the Ornstein-Uhlenbeck operator is
that an explicit representation formula for the associated semigroup
is available. This fact allows the author of \cite{L} to prove
uniform estimates for the spatial derivatives of the function
$T(t)f$ when $t$ approaches $0$ and $f$ belongs to various spaces of
(H\"older-) continuous functions. In fact, the behavior of the
spatial derivatives of $T(t)f$ depends on the variable along which
one differentiates. As a byproduct, this shows that the right
(H\"older-) spaces where to study the semigroup $\{T(t)\}$ are not
the usual ones but rather anisotropic spaces modelled on the
degeneracy of the operator ${\mathscr A}$. Denoting, roughly
speaking, by ${\mathscr C}^{\theta}(\R^N)$ these anisotropic spaces,
Lunardi shows that
\begin{equation}
\|T(t)f\|_{\mathscr C^{\theta}(\R^N)}\le
Ct^{-\frac{\theta-\alpha}{2}}\|f\|_{\mathscr C^{\alpha}(\R^N)},
\qq\;\,t\in ]0,1], \label{stima-anis}
\end{equation}
\pn for any $0<\alpha\le\theta$ and some positive constant $C$,
independent of $t$, i.e., what one can expect in the non-degenerate
case when ${\mathscr C}^{\alpha}$ and ${\mathscr C}^{\theta}$ are the usual H\"older spaces,
even for unbounded coefficients; see e.g.,
\cite{bertoldi-lorenzi-0,lunardi-studia}. Estimate
\eqref{stima-anis} represents the key stone to apply an abstract
interpolation argument from \cite{Lu-sem} to prove optimal Schauder
estimates for the solution both to the elliptic equation
\begin{equation}
\lambda u(x)-{\mathscr A}u(x)=h(x),\qq\;\,x\in\R^N,\qq\;\,\lambda>0,
\label{ell-intro}
\end{equation}
\pn and to the non-homogeneous Cauchy problem
\begin{equation}
\left\{
\begin{array}{lll}
D_tu(t,x)={\mathscr A}u(t,x)+g(t,x), & t\in ]0,\timeT[, &x\in\R^N,\\[2mm]
u(0,x)=f(x), && x\in\R^N,
\end{array}
\right. \label{pbnonhom-intro}
\end{equation}
\pn when $f,g,h$ are suitable continuous functions such that
$g(t,\cdot)$, $f$, $h$ have some additional degrees of smoothness.

Recently, the second author, in \cite{Lo-0,Lo}, has extended these
results to some non-trivial perturbations of the Ornstein-Uhlenbeck
operator in \eqref{O-U-deg}. More precisely, in \cite{Lo-0,Lo} the
operator \eqref{O-U-deg-1} has been studied under the assumption
that
\begin{equation*}
\po\ge N/2,\qq\;\,\hat B=\left (
\begin{array}{cc}
\hat B_1 & \hat B_2\\[2mm]
\hat B_3 & \hat B_4
\end{array}
\right ),
\end{equation*}
\pn the $(N-\po)\times \po$ matrix $\hat B_3$ has full rank, and
assuming that the matrix $\hat Q$ depends on $x\in\R^N$ and its
entries are possibly unbounded functions at infinity. These
assumptions imply that the Kalman rank condition \eqref{kalman} is
satisfied at any $x\in\R^N$, with $m=1$.

To prove the crucial estimates \eqref{stima-anis} a different
technique than that in \cite{L} has been applied since in this new situation no explicit
representation formulas for the associated semigroup is available.
More precisely, such estimates have been obtained by developing a
variant of the classical Bernstein method in \cite{B0}.

Recently, the results in \cite{L} have been generalized, both with
analytic and probabilistic methods, in \cite{priola,priola-1,nic} to
non-trivial perturbations of the operator ${\mathscr A}$ in
\eqref{O-U-deg-1} in which an additional unbounded drift term is
added. More specifically, Saintier in \cite{nic} considers the case
when the differential operator is of type $\hat {\mathscr
A}={\mathscr A}+\sum_{j=1}^\po F_jD_j$, with ${\mathscr A}$ being given
by \eqref{O-U-deg-1}, with an even $N$ and $\po =N/2$ and $Q=B=I$.
Here, $F$ is any smooth function with bounded derivatives up to the
third-order. This operator arises e.g., in the study of the motion
of a particle $y$ of mass one subject to a force field depending on
$y$ and its first-order derivative, perturbed by a noise. We refer
the interested reader to \cite{freidlin} for further details.
Applying the same techniques as those in \cite{Lo-0,Lo}, Saintier proves
optimal Schauder estimates for both the solutions to
\eqref{ell-intro} and \eqref{pbnonhom-intro}. Note that in this
situation, the operator ${\mathscr A}$ satisfies the Kalman rank
condition with $m=1$. The same problem is investigated with
a stochastic approach in \cite{priola}.

Very recently the results in \cite{priola,nic} have been generalized
in \cite{priola-1} with both analytic and stochastic methods to the
case when $\hat {\mathscr A}={\mathscr A}+\sum_{j=1}^\po F_jD_j$ with
some $\po<N$, ${\mathscr A}$ still being given by \eqref{O-U-deg-1}.

In this paper we extend a part of the results in
\cite{Lo-0,Lo,priola,priola-1,nic} considering a class of elliptic
operators that, up to a change of the coordinates, may be written in
the following form
\begin{equation}
{\mathscr
A}\va(x)=\sum_{i,j=1}^{p_0}q_{ij}(x)D_{ij}\va(x)+\sum_{i,j=1}^Nb_{ij}x_jD_i\va(x)+\sum_{j=1}^{p_0}F_j(x)D_j\va(x),
\qq\;\,x\in\R^N, \label{oper-1}
\end{equation}
for some $p_0<N$, where the matrices $Q_0(x)=(\hat q_{ij}(x))$,
defined by $\hat q_{ij}\equiv q_{ij}$ if $i,j\le p_0$ and $\hat
q_{ij}\equiv 0$ otherwise, and $B$ satisfy the Kalman rank condition
\eqref{kalman} for some $m$ independent of $x$. We assume that $F:\R^N\to\R^N$ is a smooth function
with derivatives whose growth at infinity is comparable with the
growth of the minimum eigenvalue of the matrix $Q^{\frac{1}{2}}(x)$.
In the particular case when $F\equiv 0$, our results apply to any
elliptic operator of the type
\begin{equation}
{\mathscr
A}\va(x)=\sum_{i,j=1}^Nq_{ij}(x)D_{ij}\va(x)+\sum_{i,j=1}^Nb_{ij}x_jD_i\va(x),\qq\;\,x\in\R^N,
\label{oper-2}
\end{equation}
when the Kalman rank condition is satisfied, by any fixed
$x\in\R^N$, for some $m< N$, independent of $x$.
\par
The paper is organized as follows. First, in Section
\ref{sect:prelim} we introduce the function spaces we deal with, as well as some notation. Moreover, we introduce the
Hypotheses that will be assumed in the whole of the paper and we
recall some preliminary results mainly from \cite{Lo-0}. Next, in
Section \ref{main-sec}, the main part of this paper, we prove
uniform estimates of the spatial derivatives for the semigroups associated with the family of
non-degenerate elliptic operators ${\mathscr A}_{\ve}:={\mathscr A}+\ve \Delta_\star$
with $\Delta_\star$ being the Laplacian containing the missing second order derivatives, i.e.,
$\Delta_\star=D_{(p_0+1)(p_0+1)}+\cdots + D_{NN}^2$.
More precisely, we show that the constants appearing in the estimates can be chosen to be independent of
$\ve\in ]0,1[$. Then, in Section \ref{sec-sem}, using these
estimates, we prove that we can associate a semigroup $\{T(t)\}$ of
bounded operators in $C_b(\R^N)$ with the operators ${\mathscr A}$
in \eqref{oper-1} and \eqref{oper-2} and that the uniform estimates
of the preceding section may be extended to $\{T(t)\}$. We also state
some remarkable continuity properties of the semigroup $\{T(t)\}$.
Further, we show that we can associate a ``weak'' generator with the
semigroup $\{T(t)\}$, a generalization of the classical concept of
infinitesimal generator of a strongly continuous semigroup, and we
give a characterization of its domain. In Section \ref{sec-optimal},
we prove Schauder estimates for the distributional solutions to the
elliptic equation \eqref{ell-intro} and the non-homogeneous Cauchy
problem \eqref{pbnonhom-intro}. Finally, in Appendix \ref{sec:tec} we
 prove some technical lemmas that are used in the proof of
the uniform estimates.

\section{Main assumptions and preliminaries}\label{sect:prelim}

In this section we introduce the main assumptions on the operators
we consider. We also fix the notation and the define the function
spaces we use in this paper.

\subsection{Hypotheses}

The assumptions on the coefficients of the operator ${\mathscr A}$
in \eqref{oper-1} and \eqref{oper-2}, we always put throughout
this paper are the following. We begin by considering the case when
${\mathscr A}$ is given by \eqref{oper-1}.
\begin{ipos}
\label{ipos-1} \pn
\begin{enumerate}[\rm(i)]
\item
$Q(x)=(q_{ij}(x))$ is a $p_0\times p_0$ symmetric matrix, with
entries which belong to $C^{\kappa}(\R^N)$ for some $\kappa\in\N$,
$\kappa\ge 3$, such that
\begin{equation}
\sum_{i,j=1}^{p_0}q_{ij}(x)\xi_i\xi_j\ge\nu(x)|\xi|^2,\qq\;\,x\in\R^N,\;\,\xi\in\R^r,
\label{cond-ellipt-1}
\end{equation}
\pn for some positive function $\nu$ such that
$\inf_{\R^N}\nu(x)=\nu_0>0$. Further,
\begin{equation}
|D^{\alpha}q_{ij}(x)|\le
C_{\|\alpha\|}|x|^{(1-|\alpha|)^+}\sqrt{\nu(x)},\qq\;\,x\in\R^N,\q\;\,i,j=1,\ldots,p_0,
\q\;\,\|\alpha\|\le\kappa, \label{cond:perlimit-1}
\end{equation}
for some positive constant $C_{\|\alpha\|}$.
\item
There exist integers $p_1,\ldots,p_r$ with $p_0\ge p_1>\ldots\ge
p_r$ such that the matrix $B$ can be split into blocks as follows:
\begin{equation}
B=\left (
\begin{array}{ccccc}
\star & \star & \ldots & \ldots & \star\\[1mm]
B_1 &  \star  & \ldots & \ldots & \star\\[1mm]
0 & B_2 & \star & \ldots & \star\\[1mm]
\vdots & \vdots & \ddots & \ddots & \vdots\\[1mm]
0 & 0 & 0 & B_r & \star
\end{array}
\right), \label{matrix-B}
\end{equation}
where $B_h$ is a $p_{h}\times p_{h-1}$ matrix with full rank, i.e.,
${\rm rank}(B_h)=p_h$ $(h=1,\ldots,r)$.
\item $F\in C^{\kappa}(\R^N,\R^{p_0})$ and
\begin{equation*}
|D^{\alpha}F(x)|\le
C\sqrt{\nu(x)},\qq\;\,x\in\R^N,\;\,\|\alpha\|\le\kappa.
\end{equation*}
\end{enumerate}
\end{ipos}

\begin{rem}
\label{rem:2.9} \pn
\begin{enumerate}[\rm(i)]
\item
Since the coefficients $q_{ij}$ ($i,j=1,\ldots, p_0$) need to
satisfy both \eqref{cond-ellipt-1} and \eqref{cond:perlimit-1}, the
$q_{ij}$'s ($i,j=1,\ldots,p_0$) and $\nu$  may grow {\it at most} as
$|x|^2$ as $|x|\to +\infty$;
\item
Hypotheses \ref{ipos-1} guarantee that the operator ${\mathscr A}$
is hypoelliptic in the sense of H\"ormander, at any $x\in\R^N$.
\end{enumerate}
\end{rem}
\pn The hypotheses on the coefficients of the operator ${\mathscr
A}$ in \eqref{oper-2} are the following.
\begin{ipos}\label{ipos} \pn
\begin{enumerate}[\rm(i)]
\item $Q=(q_{ij})$ is a $N\times N$ symmetric matrix with $q_{ij}\in
C^{\kappa}(\R^N)$ $(i,j=1,\ldots,N)$ for some $\kappa\in\N$,
$\kappa\ge 3$, and there exists a function $\nu:\R^N\to\, ]0,+\infty[$ such that
$\nu_0:=\inf_{x\in\R^N}\nu(x)>0$ and
\begin{equation*}
\sum_{i,j=1}^Nq_{ij}(x)\xi_i\xi_j\ge \nu(x)|\xi|^2,\qq\;\,\xi\in
(\ker(Q(0)))^{\perp},\q\;\,x\in\R^N.
\end{equation*}
\item
For any $\alpha\in\N_0^N$ with length at
most $\kappa$, there exists a positive constant $C=C_{\|\alpha\|}$
such that
\begin{equation}
|D^{\alpha}q_{ij}(x)|\le
C|x|^{(1-|\alpha|)^+}\sqrt{\nu(x)},\qq\;\,x\in\R^N,\q\;\,i,j=1,\ldots,N,
\q\;\,\|\alpha\|\le\kappa.\label{cond:perlimit}
\end{equation}
\item
The kernel of the matrix $Q(x)$ is independent of $x\in\R^N$ and it
is a proper subspace of $\R^N$. Moreover, $\ker(Q(0))$ does not
contain non-trivial subspaces which are invariant for $B^*$.
\end{enumerate}
\end{ipos}

\begin{rem}
\label{rem:2.5} Note that Hypothesis \ref{ipos}(iii) can be rewritten
in one of the following equivalent forms:
\begin{enumerate}[(a)]
\item
the matrix $Q_t(x)=\int_0^te^{sB}Q(x)e^{sB^*}ds$ is positive
definite for any $t>0$ and any $x\in\R^N$;
\item
there exists $r<N$ such that the rank of the block matrix
\begin{equation*}[Q(x),BQ(x),B^2Q(x),\ldots,B^rQ(x)]
\end{equation*}
 is $N$ for any
$x\in\R^N$.
\end{enumerate}
To prove this claim, it suffices to adapt to our situation the proof
of \cite[Proposition A.1]{LP}. For the reader's convenience we give
a detailed proof in the appendix (see Lemma \ref{lemma:linear-algebra}).
\end{rem}
\begin{rem}
\label{rem:2} If the coefficients of the operator ${\mathscr A}$ in
\eqref{oper-2} satisfy Hypotheses \ref{ipos-1}, then one can find a
suitable change of variables which transforms ${\mathscr A}$ in an
operator of the type \eqref{oper-1} (with $F\equiv 0$). To check
this fact, let us denote by $\{V_k: k\in\N\}$ the sequence of nested
vector spaces defined by
\begin{equation*}
V_k=\left (\ker(Q(0))\cap \ker(Q(0)B^*)\cap\ldots\cap
\ker(Q(0)(B^*)^k)\right )^{\perp},
\end{equation*}
for any $k\in\N$. In view of Lemma \ref{lemma:linear-algebra} and
Hypothesis \ref{ipos}(iii), there exists a positive integer $p_0<N$
such that $V_{p_0}=\R^N$ and $V_k$ is properly contained in
$V_{k+1}$ if $k<p_0$.

Let now $W_0=V_0$ and $W^k$ be the orthogonal of $V_{k-1}$ in $V_k$,
for any $k=1,\ldots,p_0$. Let $p_k={\rm dim}(W_k)$ for any $k\le
p_0$. Of course, $\R^N=\bigoplus_{k=0}^rW_k$. Fix an orthonormal
basis $\{e_1',\ldots,e_N'\}$ of $\R^N$ consisting of vectors of the
spaces $W_k$ ($k=0,\ldots,r$). Adapting the proof of
\cite[Proposition 2.1]{LP} to our situation, we can show that in the
basis $\{e_1',\ldots,e_N'\}$ the operator ${\mathscr A}$ may be
written as in \eqref{oper-1} with the coefficients satisfying
Hypotheses \ref{ipos-1}.
\end{rem}

In view of Remark \ref{rem:2}, without loss of generality,
throughout the paper, we can limit ourselves to dealing with the
case when ${\mathscr A}$ is given by \eqref{oper-1} and its
coefficients satisfy Hypotheses \ref{ipos-1}.

\subsection{General notation}
\label{notation}
\subsubsection*{Functions}
For any real-valued function $u$ defined on a domain of
$\R\times\R^N$, we indiscriminately write $u(t,\cdot)$ and $u(t)$ when
we want to stress the dependence of $u$ on the time variable $t$.
Moreover, for any smooth real-valued function $v$ defined on a
domain of $\R^N$, we denote by $Dv$ its gradient and by $|Dv(x)|$
the Euclidean norm of $Dv(x)$ at $x$. Similarly, by $D^kv$
($k\in\N$) we denote the vector consisting of all the $k^{\text{th}}$ order
derivatives of $v$ with no repetitions. This means that we identify
$k^\text{th}$ order derivatives of type $\frac{\partial^k v}{\partial
x_{i_1}\ldots\partial x_{i_k}}$ and $\frac{\partial^k v}{\partial
x_{j_1}\ldots\partial x_{j_k}}$ when $(j_1,\ldots,j_k)$ is a
permutation of $(i_1,\ldots,i_k)$. We agree that the vector $D^kv$
contains only derivatives $\frac{\partial^k u}{\partial
x_{i_1}\ldots\partial x_{i_k}}$ with $i_1\le i_2\le\ldots i_k$. We
denote by $|D^kv(x)|$ the Euclidean norm of the vector $D^kv(x)$.

\subsubsection*{Asymptotics}
Given any real-valued function $u$ defined in some neighborhood of
$+\infty$ and $m\in\N$, we use the usual notation $u=o(s^m)$ when
$\lim_{s\to +\infty}s^{-m}u(s)=0$.  If
$\{u_{\konst}\}_{\konst\in{\mathscr F}}$ is a family of functions
which are defined in a right-neighborhood of $0$ (independent of
$\konst$), we write $u_{\konst}=o(t^m)$ (for some $m\in\N$) when
$\lim_{t\to 0^+}t^{-m}u_{\konst}(t)=0$ for any of such parameters.

\subsubsection*{Matrices}
We denote the  $k\times k$ identity matrix by $I_k$ and the
transposed of a matrix $A$ by  $A^*$. For any matrix $A$ we denote
by $\|A\|$ its Euclidean norm. If $A$ is symmetric,
 $\lambda_{\min}(A)$ is the
minimum eigenvalue of $A$. Finally, we use the notation ``$\star$''
to denote matrices when we are not interested in their entries.

\subsubsection*{Miscellanea}
We agree that $\N_0=\N\cup\{0\}$. Given a multi-index
$\alpha=(\alpha_1,\ldots,\alpha_m)\in\N_0^m$, we denote by
$\|\alpha\|:=\sum_{i=1}^m\alpha_i$ its length. Moreover, by $a^+$ we
denote the maximum between $a\in\R$ and $0$. For any $R>0$, we
denote by $B(R)$ the open ball in $\R^N$ centered at $x=0$ and with
radius $R$. $\ov{B(R)}$ is its closure.

\subsection{Ordering the derivatives of smooth functions}
\label{sub-notation} Here, we introduce a splitting of the vector of all the
derivatives of a function $u:\R^N\to\R$ of a given order into
sub-blocks. This splitting will be extensively used in Section
\ref{main-sec}.

Given $k,q\in\N$, we introduce a (total) ordering ``$\preceq_q$'' in
the set ${\mathscr I}_{k,q}$ of all the multi-indices in
$\N_0^{q+1}$ with length $k$. We say that
$(m_0,\ldots,m_q)\preceq_q(m_0',\ldots,m_q')$ if there exists
$h=0,\ldots,q$ such that $m_j=m_j'$ for any $j=0,\ldots,h-1$ and
$m_{h}>m_{h}'$. We thus may order the elements of ${\mathscr
I}_{k,q}$ in a sequence $i_{1}^{(k,q)}\preceq_q\cdots\preceq_q
i_{c_{k,q}}^{(k,q)}$. Here, $c_{k,q}:=\left ({q+k}\atop{q}\right )$.

Now, to order the entries of the vector $D^ku$ ($k\in\N$) we proceed
as follows. Let $\{p_0,\ldots,p_r\}$ be a given  set of
non-increasing integers such that $p_0+\cdots+p_r=N$, throughout the paper these will be fixed as in Hypotheses \ref{ipos-1} (ii). We set
$p_{-1}:=0$ and introduce the sets $\II_j=\{i\in\N: r_j<i\le
r_{j+1}\}$, ($j=0,\ldots,r$), where $r_l=\sum_{k=0}^lp_{k-1}$ for
any $l=0,\ldots,r+1$. Moreover, we split $\R^N$ into the direct sum
$\R^N=\bigotimes_{j=0}^r\R^{p_j}$. Hence, any multi-index
$\alpha\in\N^N_0$ can be split as
$\alpha=(\alpha_0,\ldots,\alpha_r)$ with $\alpha_j\in\N^{p_j}_0$
($j=0,\ldots,r$) and we can write $|\alpha|:=(\|\alpha_0\|,\dots,\|\alpha_r\|)$.
 We can now split the vector $D^ku$ as follows:
\begin{enumerate}[\rm(i)]
\item
we split $D^ku$ into blocks according to the rule:
$D^ku=(D^k_1u,\ldots,D^k_{c_{k,r}}u)$, where $D^k_ju$ ($j=1,\ldots,c_{k,r}$)
contains all the derivatives $D^{\alpha}\va$ of order $k$ such that
$|\alpha|=i_j^{(k,r)}$, where

\item
we order the entries of the vectors $D^k_{j}u$
($j=1,\ldots,c_{k,r}$) according to the following rule: if
$D^{\alpha}u$ and $D^{\beta}u$ belong to the block $D^k_{j}u$, we
say that $D^{\alpha}u$ precedes $D^{\beta}u$ if
$\beta\preceq_{N-1}\alpha$.
\end{enumerate}

\subsection{H\"older spaces}
Here, we introduce most of the isotropic function spaces we deal
with in this paper.

\begin{definition}\label{def:2.1} For any $k\ge 0$, $C_b^k(\R^N)$ denotes the subset
of $C^k(\R^N)$ of functions which are bounded together with their
derivatives up to the $[k]^{\text{th}}$ order. We endow it with the norm
$$
\|u\|_{C^k_b(\R^N)}=\sum_{|\alpha|\le [k]}
\|D^{\alpha}f\|_{\infty}+\sum_{|\alpha|=[k]}[D^{\alpha}f]_{C^{k-[k]}_b(\R^N)},
$$
where $\|D^{\alpha}f\|_{\infty}$ denotes the sup-norm of
$D^{\alpha}f$ and $[D^{\alpha}f]_{C^{k-[k]}_b(\R^N)}$ is the
$(k-[k])$-H\"older seminorm of $f$. We say that $u\in
C^{\infty}_b(\R^N)$ if it belongs to $C^k_b(\R^N)$ for any $k\ge 0$.
Finally, given an open set $\Omega$ $($eventually, $\Omega=\R^N)$,
by $C^{\infty}_c(\Om)$ we denote the set of all infinitely many
times differentiable functions with compact support.
\end{definition}

We now define the anisotropic spaces ${\mathscr C}^{\theta}(\R^N)$
($\theta\in\R_+$). Let $p_0,\ldots,p_r$ be as in Hypothesis
\ref{ipos-1}(ii). To simplify the notation, we split any $x\in\R^N$
as $x=(x_0,\ldots,x_r)$ with $x_j\in\R^{p_j}$ ($j=0,\ldots,r$).

\begin{definition}\label{spazi:holder} For any $\theta>0$,
${\mathscr C}^{\theta}(\R^N)$ consists of all bounded functions
$f:\R^N\to\R$ such that
$f(x_0,\ldots,x_{j-1},\cdot,x_{j+1},\ldots,x_r)$ belongs to the
H\"older space $C_b^{\theta/(2j+1)}(\R^{p_j})$ for any $\hat
x_j:=(x_0,\ldots,x_{j-1},x_{j+1},\ldots,x_r)$ in $\R^{N-p_j}$, and
\begin{equation}
\|f\|_{j,\theta}:=\sup_{\hat
x_j\in\R^{N-p_j}}\|f(x_1,\ldots,x_{j-1},\cdot,x_{j+1},\ldots,x_r)\|_{
C^{\theta/(2j+1)}_b(\R^{p_j})}<+\infty. \label{fjtheta}
\end{equation}
\pn We norm it by $\|f\|_{{\mathscr
C}^{\theta}(\R^N)}=\sum_{j=0}^r\|f\|_{j,\theta}$ for any $f\in
{\mathscr C}^{\theta}(\R^N)$. When $\theta$ is such that
$\theta/(2j+1)\in\N$ for some $j=0\,\dots,r$, we assume that all the
existing derivatives of $f\in {\mathscr C}^{\theta}(\R^N)$ are
continuous in $\R^N$.
\end{definition}


\section{Uniform estimates for the approximating semigroups}
\label{main-sec}

\noindent To investigate the elliptic and parabolic problems associated with
${\mathscr A}$ we approximate this operator by the uniformly elliptic
operator ${\mathscr A}_{\ve}$ defined on smooth function $\va$ by
\begin{equation*}
{\mathscr A}_{\ve}\va(x):={\mathscr
A}\va(x)+\ve\sum_{i=p_0+1}^ND_{ii}\va(x),\qq\;\,x\in\R^N,
\end{equation*}
\pn
for any $\ve>0$. It is known that one can associate a
semigroup of bounded linear operators $\{T_{\ve}(t)\}$ on
$C_b(\R^N)$ with each operator ${\mathscr A}_{\ve}$. For any $f\in
C_b(\R^N)$ and any $t>0$, $T_{\ve}(t)f$ is the value at $t$ of the
unique classical solution to the Cauchy problem
\begin{equation}
\left\{
\begin{array}{lll}
D_tu(t,x)={\mathscr A}_{\ve}u(t,x),\q &t\in ]0,+\infty[, &x\in\R^N,\\[2mm]
u(0,x)=f(x), &&x\in\R^N.
\end{array}
\right. \label{pb-approx}
\end{equation}
\pn The uniqueness of the classical solution to problem
\eqref{pb-approx} follows from a corresponding maximum principle
(see, e.g., Proposition \ref{prop:maxprinc:0}(ii)). The existence of
a solution to problem \eqref{pb-approx} can be proved approximating
such a problem with Dirichlet Cauchy problems in balls centered at
$0$ and radius $n$ and using classical Schauder estimates and a
compactness argument to show that the sequence of solutions
$\{u_n\}$ to such  Dirichlet Cauchy problems converges, as $n\to +\infty$, to a
function $u_\ve$ which turns out to solve problem \eqref{pb-approx}.
We refer the reader, for example, to \cite[Chapter
1]{bertoldi-lorenzi} and \cite[Section 4]{MPW} for more details.

By letting $\ve$ go to $0$ and applying a compactness
argument we will show the existence of a semigroup ``generated by''
${\mathscr A}$. For this purpose we need  estimates for the spatial derivatives of $\{T_\ve(t)\}$ uniformly for $\ve\in ]0,1]$. This section is devoted
to the proof of such estimates.

We start with a maximum principle for (degenerate) elliptic and
parabolic equation, which leads  to uniqueness of the distributional
solutions to the problems \eqref{ell-intro} and
\eqref{pbnonhom-intro}, but which will be also crucial in the proof
of the estimates for the spatial derivatives in Theorem \ref{thm:3.1} and Theorem
\ref{thm:3.2}. We postpone the, more or less standard, proof to
Appendix \ref{sec:tec}.

\begin{prop}\label{prop:maxprinc:0}
Let ${\mathscr L}$ be any, degenerate or non-degenerate, elliptic operator defined on smooth functions
$\psi$ by
\begin{equation*}
{\mathscr
L}\psi(x)=\sum_{i,j=1}^mq_{ij}(x)D_{ij}\psi(x)+\sum_{i,j=1}^Nb_{ij}x_jD_i\psi(x)
+\sum_{j=1}^mF_j(x)D_j\psi(x),\qq\;\,x\in\R^N,
\end{equation*}
\pn with the coefficients $q_{ij}$ and $F_j$ $(i,j=1,\ldots,m)$
being $($possibly$)$ unbounded functions in $\R^N$ which may grow,
respectively, at most quadratically and linearly at infinity. Then
the following assertions hold true.
\begin{enumerate}[\rm(i)]
\item
Let $u\in C_b(\R^N)$ be a distributional solution to the equation
$\lambda u-{\mathscr L}u=f$, corresponding to some $f\in C_b(\R^N)$
and $\lambda>0$, Further, suppose that $D_iu$ and $D_{ij}u$ exist in
the classical sense for any $i,j=1,\ldots,m$. Then,
\begin{equation*}
\lambda\|u\|_{C_b(\R^N)}\le \|f\|_{C_b(\R^N)}. 
\end{equation*}
\item
Let $u:[0,\timeT]\times\R^N\to\R$ $(\timeT>0)$ be a distributional
solution of the Cauchy problem
\begin{equation*}
\left\{
\begin{array}{lll}
D_tu(t,x)={\mathscr L}u(t,x)+g(t,x), & t\in ]0,\timeT[, &x\in\R^N,\\[2mm]
u(0,x)=f(x), && x\in\R^N,
\end{array}
\right.
\end{equation*}
\pn corresponding to some $f\in C_b(\R^N)$ and $g\in
C(]0,\timeT]\times\R^N)$. Further, assume that $D_iu$, $D_{ij}u$
$(i,j=1,\ldots,m)$ exist in the classical sense. If $g\le 0$ in
$]0,\timeT]\times\R^N$, then $\sup_{[0,\timeT]\times
\R^N}u\le\sup_{\R^N}f$. Similarly, if $g\ge 0$ in
$]0,\timeT]\times\R^N$, then
$\inf_{[0,\timeT]\times\R^N}u\ge\inf_{\R^N}f$. In particular, if
$g\equiv 0$, then
\begin{equation}
\|u(t,\cdot)\|_{\infty}\le\|f\|_{\infty},\qq\;\,t\in [0,\timeT].
\label{stimaapriori:1}
\end{equation}
\end{enumerate}
\end{prop}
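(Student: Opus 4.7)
The plan is to prove both assertions by the classical \emph{Lyapunov function plus contradiction} strategy: since the coefficients are unbounded, a bounded (sub)solution need not attain its supremum on $\R^N$, so one forces attainment by subtracting a perturbation that blows up at infinity, applies the second-order maximum test to the perturbed function, and lets the perturbation parameter tend to zero. The key auxiliary object is a Lyapunov function: I would take $\varphi(x):=\log(1+|x|^2)$. Using that the $q_{ij}$ and $F_j$ grow at most like $|x|^2$ and $|x|$ respectively, and that $\sum_{i,j=1}^Nb_{ij}x_jD_i\varphi(x)$ is bounded because of the division by $1+|x|^2$, a direct differentiation yields a constant $C_0>0$ such that $|\mathscr L\varphi(x)|\le C_0$ for all $x\in\R^N$, while $\varphi(x)\to+\infty$ as $|x|\to\infty$.

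For (i), set $w_\delta(x):=u(x)-\lambda^{-1}\|f\|_\infty-\delta\varphi(x)$ for $\delta>0$. Boundedness of $u$ and $\varphi(x)\to+\infty$ force $w_\delta$ to attain its supremum at some $x_\delta\in\R^N$. Applying the classical second-order test at $x_\delta$ to the smooth perturbation and to the classically differentiable part of $u$ yields $\mathscr L u(x_\delta)\le\delta\mathscr L\varphi(x_\delta)\le\delta C_0$, so the distributional equation at $x_\delta$ gives $\lambda(u(x_\delta)-\lambda^{-1}\|f\|_\infty)\le\delta C_0$. Because $w_\delta(x)\le w_\delta(x_\delta)$, this reads $u(x)-\lambda^{-1}\|f\|_\infty\le\delta(C_0/\lambda+\varphi(x))$; letting $\delta\to 0^+$ gives $u\le\lambda^{-1}\|f\|_\infty$, and applying the same argument to $-u$ (with datum $-f$) completes the proof.

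For (ii) the setup is identical on the cylinder $[0,\timeT]\times\R^N$, with the time-dependent Lyapunov perturbation $\Phi_\delta(t,x):=\delta(C_0+1)t+\delta\varphi(x)$. Supposing $g\le 0$ and writing $M:=\sup_{\R^N}f$, consider $w_\delta:=u-M-\Phi_\delta$. Since $w_\delta\to-\infty$ as $|x|\to\infty$ uniformly in $t$ and $w_\delta(0,\cdot)\le 0$, the function $w_\delta$ attains its maximum on $[0,\timeT]\times\R^N$ at some $(t_\delta,x_\delta)$. If $t_\delta>0$, the parabolic test gives $D_tw_\delta(t_\delta,x_\delta)\ge 0$ and $\mathscr Lw_\delta(t_\delta,x_\delta)\le 0$, but the equation yields
\[
(D_t-\mathscr L)w_\delta(t_\delta,x_\delta)=g(t_\delta,x_\delta)-\delta(C_0+1)+\delta\mathscr L\varphi(x_\delta)\le -\delta<0,
\]
a contradiction. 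Hence $w_\delta\le 0$ throughout, and $\delta\to 0^+$ yields $u\le M$; applying the estimate to $-u$ covers $g\ge 0$, and \eqref{stimaapriori:1} is the combination of both.

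The principal obstacle is that $u$ is only assumed to possess classical derivatives $D_iu,D_{ij}u$ for $i,j\le m$, whereas the drift term $\sum_{i,j=1}^Nb_{ij}x_jD_iu$ involves derivatives in directions $i>m$ that a priori exist only distributionally. Thus the second-order test at $x_\delta$ cannot be invoked verbatim. I would bridge this gap by regularizing $u$ with a standard Friedrichs mollification $u_\ve=u*\rho_\ve$, which satisfies a distributionally perturbed equation with commutator terms that are uniformly controlled in $\ve$ thanks to the growth bounds on the coefficients; running the Lyapunov argument on $u_\ve$ and passing to the limit $\ve\to 0^+$ produces the desired pointwise bounds by continuity of $u$. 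This mollification step, together with the verification that the commutators do not worsen the Lyapunov estimate, is the one hiding behind the author's ``more or less standard'' comment.
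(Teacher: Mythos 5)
Your proposal is correct and follows essentially the same route as the paper: a Lyapunov perturbation forcing the (sub)solution to attain its supremum, the second-order test at that point, and a mollification argument (with the commutator identity for $\langle Bx,D\cdot\rangle$) to justify the first-order test for the drift term whose derivatives in the directions $i>m$ exist only distributionally — this last step is exactly what the paper's Lemmas \ref{lemma:approx-B} and \ref{lemma-utile-uniqueness} carry out, including a localization by a cut-off with a strict maximum so that the maximum points of the mollified functions converge. The only (harmless) deviations are cosmetic: you use $\log(1+|x|^2)$ with $|{\mathscr L}\varphi|\le C_0$, which lets you treat every $\lambda>0$ directly, whereas the paper uses $1+|x|^2$ with ${\mathscr L}\varphi\le\lambda_0\varphi$ and then removes the restriction $\lambda\ge\lambda_0$ by a shift.
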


The following theorem will be the most crucial ingredient for the
construction of the semigroup associated with ${\mathscr A}$.
\begin{theorem}
\label{thm:3.1} For any $\ve>0$, any $h\in\N$ and any $f\in
C^h_b(\R^N)$, the function $T_{\ve}(t)f$ belongs to
$C^{\kappa}_b(\R^N)$ for any $t>0$. Moreover, for any $\timeT>0$ and
any $h,l\in\N$ with $h\le l$, the function $(t,x)\mapsto
t^{(l-h)^+/2}(D^lT_{\ve}(t)f)(x)$ is bounded and continuous in
$[0,+\infty[\times\R^N$, and when $l>h$ it vanishes at $t=0$.
\end{theorem}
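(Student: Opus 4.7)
The plan is to proceed in three steps: first establish local parabolic regularity, then obtain the correct blow-up rate $t^{-(l-h)/2}$ via a Bernstein-type maximum principle argument, and finally verify the continuity and vanishing at $t=0$. For the first step, fix $\varepsilon>0$. The operator $\mathscr{A}_\varepsilon$ is uniformly elliptic on $\R^N$ with $C^\kappa$ coefficients whose growth at infinity is at most quadratic (Hypotheses~\ref{ipos-1}). Classical interior parabolic Schauder estimates applied to the Dirichlet-in-ball approximations $u_n$ described just before the theorem, combined with compactness and a diagonal argument, yield $u_\varepsilon := T_\varepsilon(\cdot)f \in C^{1,\kappa+2}_{\mathrm{loc}}(\,]0,+\infty[\,\times\R^N)$; in particular all spatial derivatives of $T_\varepsilon(t)f$ up to order $\kappa+2$ exist and are continuous on $]0,+\infty[\,\times\R^N$, which is enough to legitimize the next step.

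For the Bernstein step, fix $h\le l\le\kappa$ and define
\begin{equation*}
W(t,x) = \sum_{j=0}^{l-h} a_j\,t^{j}\,\bigl|D^{h+j} u_\varepsilon(t,x)\bigr|^2,
\end{equation*}
with positive constants $a_j$ to be fixed inductively. Using the derivative-block ordering of Section~\ref{sub-notation}, which is designed to match the block-triangular form \eqref{matrix-B} of $B$, I would differentiate the equation $\partial_t u_\varepsilon = \mathscr{A}_\varepsilon u_\varepsilon$ in space and compute $\partial_t W - \mathscr{A}_\varepsilon W$. The main negative contribution is $-2a_j t^j \sum_{i,k\le p_0} q_{ik}\,D_i D^{h+j}u_\varepsilon \cdot D_k D^{h+j}u_\varepsilon$, which by \eqref{cond-ellipt-1} controls a full multiple of $|D^{h+j+1}u_\varepsilon|^2$ in the first $p_0$ directions; it must absorb (i) the time-derivative term $j a_j t^{j-1}|D^{h+j}u_\varepsilon|^2$, (ii) the first-order terms produced by commuting $D^\alpha$ with the drift $Bx$ and with $F$ (Hypothesis~\ref{ipos-1}(iii)), and (iii) the terms from differentiating the $q_{ij}$ (Hypothesis~\ref{ipos-1}(i) and \eqref{cond:perlimit-1}). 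Choosing the $a_j$ decreasing rapidly enough in $j$ makes all of these bad terms absorbable and yields $\partial_t W - \mathscr{A}_\varepsilon W \leq 0$. Proposition~\ref{prop:maxprinc:0}(ii) then gives $W(t,\cdot) \leq \sup_{\R^N} W(0,\cdot) = a_0\,\|D^h f\|_\infty^2 \leq a_0\,\|f\|_{C^h_b(\R^N)}^2$, whence $t^{l-h}|D^l u_\varepsilon(t,x)|^2 \leq C_\varepsilon \|f\|_{C^h_b(\R^N)}^2$ and, in particular, $T_\varepsilon(t)f \in C^\kappa_b(\R^N)$ for every $t>0$.

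Continuity of $(t,x)\mapsto t^{(l-h)^+/2} D^l u_\varepsilon(t,x)$ on $]0,+\infty[\,\times\R^N$ is inherited from the first step. For $l\leq h$, continuity at $t=0$ reduces to $D^l T_\varepsilon(t)f \to D^l f$ locally uniformly, which one obtains by applying the Bernstein step to $u_\varepsilon - f$ (viewed as the Cauchy problem with zero datum and source $\mathscr{A}_\varepsilon f$). For $l>h$ I would argue by approximation: let $\rho_{1/n}$ be a standard mollifier and set $f_n := f\ast\rho_{1/n}\in C^\infty_b$ so that $\|f_n\|_{C^h_b}\le\|f\|_{C^h_b}$ and $f_n\to f$ locally uniformly (and, being $h\ge 1$, uniformly on the whole of $\R^N$ since $Df$ is bounded). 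Applying the Bernstein step to $f_n$ with $l$ in place of $h$ (permissible because $f_n\in C^l_b$) gives $\|D^l T_\varepsilon(t)f_n\|_\infty \leq C_{\varepsilon,n}$, uniformly bounded in $t\in[0,1]$, so $t^{(l-h)/2} D^l T_\varepsilon(t) f_n(x) \to 0$ as $t\to 0^+$. Applying the Bernstein step to $f-f_n$ (on compacts, using continuity hence uniform continuity of $D^hf$ there) gives $t^{(l-h)/2}|D^l T_\varepsilon(t)(f-f_n)(x)| \leq C_\varepsilon \|f-f_n\|_{C^h}$ on a fixed ball, and a routine $\eta$--$n$ argument concludes.

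The crux of the argument is the Bernstein calculation. Although $\mathscr{A}_\varepsilon$ is uniformly elliptic for each fixed $\varepsilon>0$, the coefficients $q_{ij}$ and $F$ may grow unboundedly at infinity, so heat-kernel methods are not available and a globalization via the parabolic maximum principle is unavoidable. The block-triangular structure \eqref{matrix-B} of $B$ and the derivative-block ordering of Section~\ref{sub-notation} are essential to the inductive choice of the $a_j$: cross-terms generated by the drift $Bx$ mix derivatives having different ``parabolic weights'' and, without the structural information these two ingredients provide, could not be absorbed by the second-order ellipticity that is confined to the first $p_0$ variables. Vanishing at $t=0$ for $l>h$ is a separate, milder subtlety, not implied by the blow-up bound alone, which is handled by the approximation step above.
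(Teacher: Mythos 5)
Your overall strategy (a Bernstein-type auxiliary function combined with the maximum principle, with rapidly decreasing weights $a_j$ playing the role of the paper's $\konst^m$) is the same as the paper's, but there is a genuine gap in the way you invoke the maximum principle. Your function $W(t,x)=\sum_{j} a_j t^{j}|D^{h+j}u_\ve(t,x)|^2$ is defined globally on $\R^N$ with no cutoff, and you apply Proposition \ref{prop:maxprinc:0}(ii) to it. That proposition (its proof rests on the Lyapunov function $\va(x)=1+|x|^2$ and on $u_n=u-n^{-1}\va$ attaining its maximum) requires the solution to be bounded, or at least $o(|x|^2)$ at infinity. For $W$ this amounts to an a priori global bound on $|D^{m}u_\ve(t,\cdot)|$ for $m\le l$ --- which is exactly the conclusion you are trying to prove; the interior Schauder estimates of your first step only give bounds that degrade as $|x|\to+\infty$ because the coefficients are unbounded. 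This is why the paper runs the Bernstein argument on the Dirichlet approximations $u_R$ in $B(R)$ with the cutoffs $\eta_R^{2m}$ inserted into the auxiliary function: $v_R$ vanishes on $\partial B(R)$, the classical maximum principle on a bounded domain applies, and the whole point of the computation is that the smallness parameter can be chosen \emph{independently of $R$} (which in turn forces one to control the extra terms $\mathscr{A}\eta_R$ and $\langle QD\eta_R,\cdot\rangle$ using \eqref{cond:perlimit-1}, i.e.\ $|(QD\eta_R)_i|\le C\sqrt{\nu}$ for $i\le p_0$). Your proposal never confronts these cutoff terms, and without the localization the maximum-principle step is circular.

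Two smaller points. First, $C^{1,\kappa+2}_{\rm loc}$ regularity overshoots what $C^{\kappa}$ coefficients deliver; this matters because the Bernstein computation at order $l$ needs $l+2$ spatial derivatives, and the paper sidesteps the issue by first working with $f\in C^\infty_c(\R^N)$ and the Dirichlet solutions $u_R$, then removing this restriction by approximation together with interior estimates. Second, your treatment of continuity at $t=0$ for $l\le h$ via the problem with source $\mathscr{A}_\ve f$ does not work as stated: $\mathscr{A}_\ve f$ is unbounded (and undefined for $h<2$). Likewise, for $l>h$ your error term requires $\|f-f_n\|_{C^h_b(\R^N)}\to 0$, which fails for general $f\in C^h_b$; the paper instead uses the interpolation inequality \eqref{intermediate} on balls, which only needs locally uniform convergence of $T_\ve(\cdot)f_n$ to $T_\ve(\cdot)f$.
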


\begin{proof}
We restrict ourselves to showing the assertion in the case when
$h=0$, the other cases being similar and even easier. We split the
proof into two steps. In the first one, we prove that there exists a
positive constant $C$, independent of $f$, such that
\begin{equation}
\|D^lT_{\ve}(t)f\|_{\infty}\le Ct^{-\frac{l}{2}}\|f\|_{\infty},
\label{unif-estim-ve}
\end{equation}
\pn for any $f\in C_b(\R^N)$, $t>0$. Next, in Step 2, we prove that
the function $(t,x)\mapsto t^{l/2}(D^lT_{\ve}(t)f)(x)$ is continuous
up to $t=0$. \med \pn {\em Step 1.} Without loss of generality, we
can limit ourselves to proving \eqref{unif-estim-ve} in the
particular case when $f\in C^{\infty}_c(\R^N)$. Indeed, in the
general case it suffices to approximate $f\in C_b(\R^N)$ with a
sequence of smooth functions $f\in C^{\infty}_c(\R^N)$, bounded in
$C_b(\R^N)$ and converging to $f$ locally uniformly in $\R^N$. It is
well known that $T_{\ve}(\cdot)f_n$ converges to $T_{\ve}(\cdot)f$
uniformly in $[0,\timeT]\times \ov{B(M)}$, as $n\to +\infty$, for
any $M,\timeT>0$ (see e.g., \cite[Proposition
2.2.9]{bertoldi-lorenzi} or \cite[Proposition 4.6]{MPW}). Moreover,
the classical interior estimates in \cite[Chapter 4, Theorem
5.1]{LSU} imply that
\begin{equation*}
\|D^lT_{\ve}(\cdot)f_n-D^lT_{\ve}(\cdot)f\|_{C([\timeT/2,\timeT]\times
B(M))}\le \hat
C\|T_\ve(\cdot)f_n-T_\ve(\cdot)f\|_{L^{\infty}([0,2\timeT]\times
\ov{B(2M)})},
\end{equation*}
\pn for any $M,\timeT>0$ and some positive constant $\hat C$,
depending on $M,\timeT$. Hence, $D^lT_{\ve}(t)f_n$ converges to
$D^lT_{\ve}(t)f_n$ locally uniformly in $]0,+\infty[\times\R^N$ and
this allows us to extend \eqref{unif-estim-ve} to any $f\in
C_b(\R^N)$.

Now for the proof of \eqref{unif-estim-ve} for $f\in
C_c^\infty(\R^N)$, let $\va\in C^{\infty}_c(\R)$ be a non-increasing
function such that $\va(t)=1$ for any $t\in\, ]-1/2,1/2[$, $\va(t)=0$
for any $t\in\R\setminus ]-1,1[$. For $R>1$ define the functions
$\eta_R:\R^N\to\R$ by $\eta_R(x):=\va(|x|/R)$ and $v_R$ by
\begin{equation*}
v_R(t,x):=\sum_{m=0}^l{\konst}^mt^m\eta^{2m}_R(x)|D^mu_R(t,x)|^2,\qq\;\,
t\in [0,\timeT],\;\,x\in B(R),
\end{equation*}
\pn where $u_R$ denotes the classical solution to the Dirichlet
Cauchy problem in the ball $B(R)$ with initial value $f$, and
${\konst}$ is a positive parameter to be fixed later on ($\konst$
will be \emph{small}). To simplify the notation, we drop out the
index $R$, when there is no danger of confusion.

The classical Schauder estimates of \cite[Chapter 4, Theorem
5.1]{LSU} imply that $v$ is continuous in
$[0,\timeT]\times\ov{B(R)}$. Moreover, a straightforward computation
shows that $v$ solves the Cauchy problem
\begin{equation}
\left\{
\begin{array}{lll}
D_tv(t,x)={\mathscr A}v(t,x)+g(t,x),\q &t\in [0,\timeT], &x\in B(R),\\[1.5mm]
v(t,x)=0, &t\in [0,\timeT], &x\in\partial B(R),\\[1.5mm]
v(0,x)=(f(x))^2, &&x\in B(R),
\end{array}
\right. \no
\end{equation}
\pn where, for any $t\in [0,\timeT]$ and any $x\in B(R)$, the
function $g$ is given by $g(t,x)=\sum_{j=1}^4g_j(t,x)$ with
\begin{align*}
g_1(t,\cdot)&=
-2\sum_{i,j=1}^N\sum_{m=0}^l\konst^mt^m\eta^{2m}q_{ij}\langle D^mD_iu(t),D^mD_ju(t)\rangle,\\[1.5mm]
g_2(t,\cdot)&= -\langle
QD\eta,D\eta\rangle\sum_{m=1}^l2m(2m-1)\konst^mt^m\eta^{2m-2}|D^mu(t)|^2\\
&\quad+\sum_{m=1}^lm\konst^mt^{m-1}\eta^{2m}|D^mu(t)|^2,\\[1.5mm]
g_3(t,\cdot)&= -2{\mathscr
A}\eta\sum_{m=1}^lm\konst^mt^m\eta^{2m-1}|D^mu(t)|^2\\
&\quad
-8\sum_{i,j=1}^N\sum_{m=1}^lm\konst^mt^m\eta^{2m-1}q_{ij}D_i\eta\langle D^mu(t),D^mD_ju(t)\rangle,\\[1.5mm]
g_4(t,\cdot)&= 2\sum_{m=1}^l\konst^mt^m\eta^{2m}\langle
[D^m,{\mathscr A}]u(t),D^mu(t)\rangle.
\end{align*}
Here, $[D^m,{\mathscr A}]$ denotes the commutator between the
operators $D^m$ and ${\mathscr A}$. Using the ellipticity assumption
on $q_{ij}$ we get
\begin{align}
g_1(t)&\,\le
-2\nu\sum_{m=0}^l\konst^mt^m\eta^{2m}|D^{m+1}_{\star}u(t)|^2
-2\ve\sum_{m=0}^l\konst^mt^m\eta^{2m}|D^{m+1}_{\star\star}u(t)|^2\label{stimasug1}\\\no
&\,=-2\nu\sum_{m=1}^{l+1}\konst^{m-1}t^{m-1}\eta^{2m-2}|D^{m}_{\star}u(t)|^2
-2\ve\sum_{m=1}^{l+1}\konst^{m-1}t^{m-1}\eta^{2m-2}|D^{m}_{\star\star}u(t)|^2,
\end{align}
\pn where $D^m_{\star}u$ (respectively $D^m_{\star\star}u$) denotes
the vector whose entries are the $m^{\text{th}}$ order derivatives
$\frac{\partial^m u}{\partial x_{i_1}\ldots\partial x_{i_m}}$ with
$i_j\le p_0$ for some $j=1,\ldots, m$ (respectively $i_j>p_0$ for
all $j=1,\ldots,m$).

\med\par We turn to estimating the function $g_3$. From Hypotheses
\ref{ipos-1} it follows easily that
\begin{equation}
\big |{\mathscr A}\eta(x)\big |\le C_1,\qq\;\, \big
|(Q(x)D\eta(x))_i\big |\le C_1\left\{
\begin{array}{ll}
\sqrt{\nu(x)},\q & \mbox{if}~i\le p_0,\\[2mm]
\ve, & \mbox{if}~i>p_0,
\end{array}
\right. \label{stimaperg4:1}
\end{equation}
\pn for any $x\in\R^N$ and some positive constant $C_1$. Taking this
into account and using Young's inequality we conclude
\begin{align}
g_3(t)&\le\, 2C_1\sum_{m=1}^lm\konst^mt^m\eta^{2m-1}|D^mu(t)|^2
\no\\
&\quad+C_2\sum_{m=1}^lm\konst^mt^m\eta^{2m-1}\sqrt{\nu}\,|D^mu(t)|\cdot|D^{m+1}_{\star}u(t)|\no\\
&\quad+ C_2\ve\sum_{m=1}^lm\konst^mt^m\eta^{2m-1}|D^mu(t)|\cdot |D^{m+1}u(t)|\no\\
&\le\, 2C_1\sum_{m=1}^lm\konst^mt^m\eta^{2m-1}|D^mu(t)|^2\label{stimasug4}\\
&\quad+C_2\sum_{m=1}^lm\left
(\konst^{m-\frac{1}{2}}t^{m-\frac{1}{2}}\eta^{2m-2}|D^mu(t)|^2
+\konst^{m+\frac{1}{2}}t^{m+\frac{1}{2}}\eta^{2m}\nu |D^{m+1}_{\star}u(t)|^2\right )\no\\
&\quad\, + C_2\ve\sum_{m=1}^lm\left
(\konst^{m-\frac{1}{2}}t^{m-\frac{1}{2}}\eta^{2m-2}|D^mu(t)|^2
+\konst^{m+\frac{1}{2}}t^{m+\frac{1}{2}}\eta^{2m}
|D^{m+1}u(t)|^2\right ), \no
\end{align}
\pn for any $t\in [0,\timeT]$ and some positive constant $C_2$,
independent of $\ve$ and $t$. The term $g_4$ can be estimated
similarly, taking now \eqref{cond:perlimit} into account. We obtain
\begin{align*}
g_4(t)\le &\;2\|B\|_{\infty}\sum_{m=1}^l\konst^mt^m\eta^{2m}|D^mu(t)|^2\\
&+2\sum_{m=1}^l\konst^mt^m\eta^{2m}\sum_{n=1}^m\Bigl(\|D^nQ\|_{\infty}|D^{m+2-n}_{\star}u(t)|\cdot|D^mu(t)|
\\&\quad\quad\quad\quad\quad\quad\quad\quad\quad\quad\quad+\|D^nF\|_{\infty}|D^{m+1-n}_{\star}u(t)|\cdot|D^mu(t)|\Bigr),
\end{align*}
\pn for any $t\in [0,\timeT]$, where $\|D^hQ\|_{\infty}$
(respectively $\|D^hF\|_{\infty}$) ($h=1,\ldots,l$) denotes the
maximum of the sup-norm of the functions $D^hq_{ij}$ (respectively
$D^hF_j$) ($i,j=1,\ldots,N$). Hence, taking Hypotheses
\ref{ipos-1}(i) and \ref{ipos-1}(ii) into account, we can write
\begin{align}
g_4(t) \le\,& C_3\nu\sum_{m=1}^{l+1}
\konst^{m-\frac{1}{2}}t^{m-\frac{1}{2}}\eta^{2m}|D^{m}_{\star}u(t)|^2
+C_3\sum_{m=1}^{l}\konst^{m}t^{m}\eta^{2m}|D^mu(t)|^2\no\\
&+C_3\sum_{m=1}^{l}\konst^{m-\frac{1}{2}}t^{m-\frac{1}{2}}\eta^{2m}|D^mu(t)|^2,
\label{stimasug5}
\end{align}
\pn for any $t\in [0,\timeT]$ and some positive constant $C_3$,
independent of $t$. Summing up, from \eqref{stimasug1},
\eqref{stimasug4} and \eqref{stimasug5} we easily deduce that
\begin{align*}
g(t)\le\;&\sum_{m=1}^{l+1}
M_m^{\star}({\konst},\timeT){\konst}^{m-1}t^{m-1}\eta^{2m-2}|D^m_{\star}u(t)|^2\\
&+\sum_{m=1}^{l+1}
M_m^{\star\star}({\konst},\timeT){\konst}^{m-1}t^{m-1}\eta^{2m-2}|D^m_{\star\star}u(t)|^2,
\end{align*}
\pn for any $t\in [0,\timeT]$, where
\begin{align*}
M_{m}^{\star}({\konst},\timeT):=\,&
(-2+C_2(m-1)\sqrt{\konst}\sqrt \timeT+C_3\sqrt{\konst}\sqrt \timeT)\nu\no\\
&+(C_2m+C_2\ve m+C_3+C_2\ve(m-1))\sqrt{\konst}\sqrt \timeT\no\\
&+(m+2C_1m\timeT+C_3\timeT)\konst,\\[1mm]
M_{m}^{\star\star}({\konst},\timeT):=\,& -2\ve+(C_2m+C_2\ve
m+C_2\ve(m-1)+C_3)\sqrt{\konst}\sqrt \timeT\no\\
&+(m+2C_1m\timeT+C_3\timeT)\konst,
\end{align*}
\pn for any $m=1,\ldots,l+1$. Since in $M_m^{\star}(\konst,\timeT)$
and $M_m^{\star\star}(\konst,\timeT)$ apart from the first negative
term everything vanishes as ${\konst}\searrow 0$ for any
$m=1,\ldots,l+1$, it  follows that for sufficiently small
${\konst}>0$ (independent of $R$!) the inequality  $g(t,x)\le 0$
holds for any $t\in [0,\timeT]$ and any $x\in B(R)$. The classical
maximum principle yields then
\begin{equation*}
|v_R(t,x)|\le\|f\|^2_{\infty} \quad\mbox{and so}\quad
t^m\eta_R^{2m}(x)|D^m u_R(t,x)|^2 \le C_m \|f\|^2_{\infty},\qq\;\,
\end{equation*}
for any $(t,x)\in [0,\timeT]\times\ov{B(R)}$.

\pn Now, \eqref{unif-estim-ve} follows by letting $R\to +\infty$.

\med \pn {\em Step 2.} We now conclude the proof by showing that the
function $w_l:[0,+\infty[\times\R^N\hskip-0.2em\to\R$ defined by
$w_l(t,x):=t^{-l/2}D^lu(t,x)$ is continuous on
$[0,+\infty[\times\R^N$. \med \pn If $f\in C^{\infty}_c(\R^N)$ this
claim is easily checked. Indeed, in this case, if  $u_R$ denotes the
solution of the Dirichlet Cauchy problem on $B(R)$ with initial value
$f$, it is well-known (see \cite[Chapter 4, Theorem 5.1]{LSU}) that, for
any $\timeT>0$ and any $m,M\in\N$, with $m<M$ and ${\rm
supp}(f)\subset B(m)$, there exists a positive constant
$C_l=C_l(m,M,\timeT)$ such that
\begin{align*}
\|u_R\|_{C^{l+{\theta}/2,2l+{\theta}}([0,\timeT]\times B(m))}\le\,&
C_l\left (\|f\|_{C^{2l+{\theta}}_c(\R^N)}
+\|u_R\|_{C([0,2\timeT]\times B(M))}\right )\\
\le\,& 2C_l\|f\|_{C^{2l+{\theta}}_c(\R^N)},
\end{align*}
\pn for any $R>0$. Hence, by a compactness argument, we can easily
show that $u_R$ converges to $T_{\ve}(\cdot)f$ in $C^{l,2l}_{\rm
loc}([0,+\infty[\times\R^N)$. Since the function $|D^lu_R|$ is
continuous in $[0,+\infty[\times\R^N$ so is the functions $w_l$,
too. \med\pn Let us now consider the general case when $f\in
C_b(\R^N)$. Then, there exists a sequence $\{f_n\}\in
C^{\infty}_c(\R^N)$ which is bounded in $C_b(\R^N)$ and converges to
$f$ locally uniformly in $\R^N$. Let us fix $k,m\in\N$. By
\cite[Proposition 1.1.3(iii)]{Lu1}, we know that
\begin{equation}
\|\psi\|_{C^{l}(\ov{B(M)})}\le
P_l\|\psi\|_{C(\ov{B(M)})}^{\frac{1}{l+1}}\|\psi\|_{C^{l+1}(\ov{B(M)})}^{\frac{l}{l+1}},
\label{intermediate}
\end{equation}
\pn for some positive constant $P_l=P_l(M)$ and any function
$\psi\in C^{l+1}(\ov{B(M)})$. Apply \eqref{intermediate} for
$\psi=t^{l/2}T(\cdot)f_n-t^{l/2}T(\cdot)f$ and use the already
proved inequality \eqref{unif-estim-ve} to conclude
\begin{align}
&\sup_{t\in ]0,\timeT]}\|t^{\frac{l}{2}}D^lT_\ve(t)f_n-t^{\frac{l}{2}}D^lT_\ve(t)f\|_{C(\ov{B(M)})}\no\\
\le\,& P_l
\|T_\ve(\cdot)f_n-T_\ve(\cdot)f\|_{C([0,\timeT]\times\ov{B(M)})}^{\frac{1}{l+1}}
\sup_{t\in ]0,\timeT]}\|t^{\frac{l+1}{2}}T_\ve(t)f_n-t^{\frac{l+1}{2}}T_\ve(t)f\|_{C^{l+1}(\ov{B(M)})}^{\frac{l}{l+1}}\no\\
\le\,&
P_l'\|T_\ve(\cdot)f_n-T_\ve(\cdot)f\|_{C([0,\timeT]\times\ov{B(M)})}^{\frac{1}{l+1}},
\label{conv-deriv}
\end{align}
\pn for some positive constant $P_l'$. The right-hand side of
\eqref{conv-deriv} vanishes as $n\to +\infty$. By the arbitrariness
of $\timeT$ and $M$, it follows immediately that the function $w_l$
is continuous in $[0,+\infty[\times\R^N$. In particular, it vanishes
at $t=0$ since the function $(t,x)\mapsto t^{l/2}(D^lT(t)f_n)(x)$
does for any $n\in\N$. This completes the proof.
\end{proof}


We are now in a position to prove the main result of this section.
Our ultimate aim is to show that the semigroups $\{T_\ve(t)\}$
converge to a semigroup $\{T(t)\}$ which is associated with the
operator ${\mathscr A}$, and we also wish to establish estimates for the spatial derivatives of $\{T(t)\}$. Contrary to the uniformly elliptic
situation of $\{T_\ve(t)\}$ the behavior near $t=0$ of the partial
derivatives of $D^\alpha T(t)f$ is expected to depend not only on
the length $\|\alpha\|$ of the multi-index $\alpha$, but also on the
directions along which we differentiate. Thus the well-know behavior
$t^{-\|\alpha\|/2}$ is replaced by some function growing faster near
$0$. The exact behavior is well-known, e.g., for the
Ornstein-Uhlenbeck semigroup (see \cite{L}) and the optimal exponent
is actually given by the following function $q$. We define
$q:\N_0^{r+1}\to\R$ as
\begin{equation*}
q(\alpha)=\sum_{k=0}^{r}\frac{2k+1}{2}\alpha_k=\frac{1}{2}\|\alpha\|+\sum_{k=1}^{r}k\alpha_k,\quad
\alpha\in \N_0^{r+1}.
\end{equation*}
With this function the, still to be constructed, semigroup
$\{T(t)\}$ will obey the estimate $$\|D^\alpha
T(t)f\|_\infty \leq C t^{-q(|\alpha|)} \|f\|_\infty$$ for any $f\in
C_b(\R^N)$ and $\alpha\in \N_0^N$ (recall the notation $|\alpha|=(\alpha_0,\alpha_1,\dots,\alpha_r)$
from Subsection \ref{sub-notation}). Whereas, if we have a control
over certain derivatives of $f$, say $f\in C_b^h(\R^N)$ we expect a
better behavior. Indeed, this will be the case. For the precise
statement we will need the following function $q_h:\N_0^{r+1}\to\R$
($h\in\N_0$). We let
\begin{equation*}
q_h(\beta)=\frac{1}{2}
\|\beta\|-\frac{1}{2}h+\sum_{k=0}^{j(\beta)-1}k\beta_k+(j(\beta)-1)\bigg
(\sum_{k=j(\beta)}^r\beta_k-h\bigg ),
\end{equation*}
for any multi-index $\beta\in \N_0^{r+1}$, where $j(\beta)\in\N_0$
is the smallest integer such that $\sum_{j=j(\beta)}^r\alpha_j\le
h$, and we agree that $\sum_{k=0}^{-1}k\alpha_k:=0$ and
$q_h(\beta)=0$, if $h\ge\|\beta\|$. This function describes the
expected behavior near $t=0$ in the estimates of the derivatives, and it
models the following: if we have a function in $C_b^h(\R^N)$, then
we can drop out any $h$ partial derivatives from a multi-index
$\alpha$, since these should not contribute to the power of $t$. We
do this in a way that derivatives which would give the largest
contribution  in the derivative-estimate are dropped out. Then, we can
evaluate our $q$ on this new multi-index and get the right behavior
near $t=0$.

\med\pn

The in $\ve\in ]0,1]$ uniform estimates for spatial derivatives of $\{T_\ve(t)\}$
 are given by the following result.
\begin{theorem}
\label{thm:3.2} For any compact interval $J\subseteq ]0,+\infty[$, multi-index
$\alpha=(\alpha_0,\ldots,\alpha_r)$, with $\alpha_j\in\N_0^{p_j}$
$(j=0,\ldots,r)$ and $\|\alpha\|\leq \kappa$, any $h\in\N_0$, with $h\le \|\alpha\|$, there exists a
positive constant $\tilde C$, depending on $\alpha$, but being
independent of $\ve\in ]0,1]$, such that
\begin{equation}
\|D^{\alpha}T_{\ve}(t)f\|_{\infty}\le \tilde Ct^{-q_h(|\alpha|)}
\|f\|_{C^h_b(\R^N)},\qq\;\,t\in J,\;\,\ve\in ]0,1].
\label{stimaapriori:ave}
\end{equation}
\end{theorem}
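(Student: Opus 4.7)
The proof extends the Bernstein argument of Theorem \ref{thm:3.1} to the anisotropic setting, replacing the isotropic weight $t^l$ by the scaling $t^{2q_h(|\beta|)}$ dictated by the Ornstein--Uhlenbeck heuristics. First I would perform the same initial reductions as in Theorem \ref{thm:3.1}: assume $f\in C_c^\infty(\R^N)$ (the general case is recovered by approximation and the interior Schauder estimates cited there), replace $T_\ve(t)f$ by the solution $u_R$ of the Dirichlet Cauchy problem on $B(R)$ with datum $f$, and prove a bound for $D^\alpha u_R$ on $\ov{B(R/2)}$ that is uniform in $R$ and in $\ve\in]0,1]$; letting $R\to+\infty$ then yields \eqref{stimaapriori:ave}.

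For the interior estimate I propose the anisotropic Bernstein function
\[
v_R(t,x)=\sum_{\|\beta\|\le\kappa}\konst^{\sigma(\beta)}t^{2q_h(|\beta|)}\eta_R^{2\|\beta\|}(x)\,|D^\beta u_R(t,x)|^2,
\]
where the sum runs over multi-indices $\beta=(\beta_0,\ldots,\beta_r)$ with $\beta_j\in\N_0^{p_j}$, $\eta_R$ is the cut-off of Theorem \ref{thm:3.1}, and $\sigma(\beta)$ is a nonnegative exponent (for instance $\sigma(\beta)=\|\beta\|$), to be fine-tuned so that the matching described below works. Two features are crucial: the initial datum satisfies $v_R(0,\cdot)\le C\|f\|_{C^h_b(\R^N)}^2$, because $q_h(|\beta|)=0$ whenever $\|\beta\|\le h$ and is strictly positive otherwise; and $v_R$ vanishes on $\partial B(R)$ and is continuous up to the parabolic boundary by the classical Schauder theory already invoked in Theorem \ref{thm:3.1}.

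The heart of the proof is to show $\partial_t v_R-\mathscr A_\ve v_R\le 0$ on $[0,\timeT]\times B(R)$ for $\konst>0$ small enough, uniformly in $R$ and $\ve$. Differentiating $v_R$ and using the ellipticity in the first $p_0$ directions together with the $\ve\Delta_\star$-regularization yields a principal negative term
\[
-2\sum_\beta\konst^{\sigma(\beta)}t^{2q_h(|\beta|)}\eta_R^{2\|\beta\|}\Big(\nu\sum_{j\le p_0}|D^\beta D_j u_R|^2+\ve\sum_{j>p_0}|D^\beta D_j u_R|^2\Big),
\]
plus time derivatives of the weight $t^{2q_h(|\beta|)}$, boundary contributions supported on $R/2<|x|<R$ (handled exactly as in Theorem \ref{thm:3.1} via the estimates on $D\eta_R$ and $\mathscr A\eta_R$), and commutator terms $[D^\beta,\mathscr A]u_R$. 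The key anisotropic observation concerns the commutator with the drift $\sum b_{ij}x_jD_i$: because of the strictly sub-diagonal block structure \eqref{matrix-B} of $B$, each such commutator produces derivatives $D^\gamma u_R$ with $\|\gamma\|=\|\beta\|$ but $|\gamma|$ obtained from $|\beta|$ by lowering one derivative from a block $h$ to block $h-1$, which strictly decreases $q_h$. Combined with the bounds of Hypotheses \ref{ipos-1}(i) and (iii) and Young's inequality, each off-diagonal term can be absorbed either into the $-\nu$ (resp.\ $-\ve$) term at the same $\beta$, or into the one at a companion $\beta'$ with $q_h(|\beta'|)=q_h(|\beta|)-1$; the time powers match precisely thanks to the $-1$ produced by differentiating $t^{2q_h(|\beta|)}$.

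The main obstacle is the combinatorial bookkeeping required to guarantee that, for every $\beta$ in the sum and every type of commutator contribution, the required companion $\beta'$ is available in the sum and that the exponents $\sigma(\beta')=\sigma(\beta)-1$ match so that Young's inequality leaves a residual factor $\konst^{1/2}\sqrt{\timeT}$ in each offending coefficient. Once this is verified, all coefficients of the non-principal terms take the form $M(\konst,\timeT)\to 0$ as $\konst\to 0^+$, so choosing $\konst$ small (independently of $R$ and $\ve$) makes $\partial_t v_R-\mathscr A_\ve v_R\le 0$. Proposition \ref{prop:maxprinc:0}(ii) applied on $B(R)$ then gives $v_R\le\|v_R(0,\cdot)\|_\infty\le C\|f\|_{C^h_b(\R^N)}^2$; restricting to $|x|\le R/2$, where $\eta_R\equiv 1$, extracting the summand $\beta=\alpha$, and letting $R\to+\infty$ yields \eqref{stimaapriori:ave} on every compact $J\subset]0,+\infty[$.
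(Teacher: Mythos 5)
Your overall framework (Bernstein function weighted by $t^{2q_h}$, maximum principle, absorption by Young's inequality) is the right one, but the proposal has a fatal structural gap: a \emph{diagonal} sum of squares $\sum_\beta \konst^{\sigma(\beta)}t^{2q_h(|\beta|)}\eta_R^{2\|\beta\|}|D^\beta u_R|^2$ cannot be made a subsolution uniformly in $\ve$. The ellipticity term you display is negative only on derivatives $D^\beta D_j u$ with $j\le p_0$, and the $\ve$-part is useless because it degenerates as $\ve\to 0^+$. Hence for any $\beta$ with $\beta_0=0$ (all derivatives in the degenerate directions) your scheme produces \emph{no} negative multiple of $|D^\beta u|^2$ whatsoever, while two sources produce positive multiples of exactly this quantity: the time derivative of the weight, $2q_h(|\beta|)t^{2q_h(|\beta|)-1}|D^\beta u|^2$ (note $q_h(|\beta|)>0$ here when $\|\beta\|>h$), and the drift commutator. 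On the latter, your bookkeeping also goes the wrong way: by the block structure \eqref{matrix-B}, $[D^\beta,\langle B\cdot,D\rangle]$ replaces a derivative in block $s$ by one in block $j\le s+1$; the sub-diagonal blocks $B_h$ push derivatives \emph{deeper} (from $s$ to $s+1$), which \emph{increases} $q_h$ by $1$ (Lemma \ref{lem:propqh}(iv)), so these terms cannot be absorbed at the same $\beta$ and require a negative companion term at the deeper index --- precisely the term your construction does not have.

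The missing idea is the one the paper builds the whole proof around: the Lyapunov function must be a genuinely non-diagonal quadratic form $\sum_l\langle\mathscr H^{(l)}(t)D^lu_\ve,D^lu_\ve\rangle$, with off-diagonal blocks $\konst^{\eta^{(l)}_{m,\ell(m)}}t^{q_h(i^{(l)}_m)+q_h(i^{(l)}_{\ell(m)})}H^{(l)}_{m,\ell(m)}$ coupling each block $D^l_m u$ with $\|(i^{(l)}_m)_0\|=0$ to the companion block $D^l_{\ell(m)}u$ obtained by moving one derivative up into the previous variable group. When the drift commutator acts on these cross terms it produces $\langle H^{(l)}_{m,\ell(m)}\mathscr J^{(l)}_m D^l_mu,D^l_mu\rangle$, and Lemma \ref{lemma-crucial} (which is where the rank condition on the $B_h$'s, i.e. the Kalman condition, enters) guarantees $\mathscr J^{(l)}_m$ has full rank, so $H^{(l)}_{m,\ell(m)}$ can be chosen to make this contribution negative definite; this is the only source of the term $-\iota^{(k)}\konst^{\eta^{(l)}_{m,\ell(m)}}t^{(2q_h(i^{(l)}_m)-1)^+}|D^l_mu_\ve|^2$ that controls the purely degenerate derivatives. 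Without it, no choice of the scalar exponents $\sigma(\beta)$ or of $\konst$ can make $g_\ve\le 0$. (Two further, minor, discrepancies: the paper works globally on $\R^N$, using Theorem \ref{thm:3.1} to legitimize the maximum principle, rather than with cut-offs and Dirichlet problems; and the parameter $\konst$ is ultimately taken \emph{large}, not small, to dominate the $o(\konst^{\eta})$ remainders and enforce positive definiteness of $\mathscr H^{(l)}$.)
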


For the proof we need some preparation and auxiliary results. Define
the function $\ell:\N\setminus\{1\}\to\N$ as follows. Let
$i_m^{(k,r)}=(0,\alpha_1,\ldots,\alpha_r)$, where $k=\|\alpha\|$,
and let $j$ be the smallest integer such that $\alpha_{ j}>0$. Then,
$\ell(m)$ is the index such that
$i_{\ell(m)}^{(k,r)}=(0,\ldots,0,1,\alpha_{ j}-1,\alpha_{
j+1},\ldots,\alpha_r)$. As it is immediately seen,
$q(i^{(k,r)}_{\ell(m)})=q(i^{(k,r)}_m)-1$. Moreover, we have
$\ell(m)<m$. These will be used in the sequel without further
mentioning. We also need some properties of the function $q_h$
presented in the next lemma (the proof is in Appendix \ref{sec:tec}).
\begin{lemma}\label{lem:propqh}For $\alpha\in \N_0^{r+1}$, the
following hold.
\begin{enumerate}[{\rm (i)}]
\item
$\|\alpha\|\leq h$ if and only if $q_h(\alpha)=0$.
\item
We have $q_h(\alpha)\geq (\|\alpha\|-h)^+/2$.
\item
If $\|\alpha\|\geq h$  and
$\beta=(\alpha_0+1,\alpha_1,\dots,\alpha_r)$, then
$q_h(\beta)=q_h(\alpha)+1/2$.
\item
If $\beta=\alpha-e^{(r+1)}_j+e^{(r+1)}_{j'}$ for some $0\leq
j,j'\leq r$ such that $\alpha_j>0$ and $j'\leq j+1$, then
$q_h(\alpha)\geq q_h(\beta)-1$.
\item
Suppose that $\alpha_0,\dots,\alpha_{j_0-1}=0$ and $\alpha_{j_0}>0$
for some $j_0>0$. Set
$\beta=\alpha-e^{(r+1)}_{j_0}+e^{(r+1)}_{j_0-1}$. If  $h<l$, then
$q_h(\alpha)>1$ and $q_h(\beta)=q_h(\alpha)-1$.
\item
Suppose that $\alpha_0,\dots,\alpha_{j_0-1}=0$ and $\alpha_{j_0}>0$
for some $j_0>0$. Set
$\hat\alpha=\alpha-e^{(r+1)}_{j_0}+e^{(r+1)}_{j_0-1}$ and
$\beta=\hat\alpha-e^{(r+1)}_j+e^{(r+1)}_{j'}$ for some $0\leq
j,j'\leq r$ such that $\alpha_j>0$ and $j'\leq j+1$. Then
$q_h(\alpha)+q_h(\hat\alpha)\geq 2q_h(\beta)-1$.
\item
Let $\widetilde\alpha$ and $\alpha$ be two multi-indices such that
$\widetilde\alpha_j\le\alpha_j$ for all $j=0,\ldots,r$ and
$\widetilde\alpha_{j_0}<\alpha_{j_0}$ for some $j_0$. Further, let
$\beta=2 e_1^{(r+1)}+\widetilde \alpha$. Then, we have
$q_h(\alpha)\geq q_h(\beta)-1$.
\end{enumerate}
\end{lemma}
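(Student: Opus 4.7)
The plan is to recast $q_h$ in a closed form from which all seven claims become direct calculations. Writing $S_l(\alpha):=\sum_{k=l}^r\alpha_k$ (so $S_0(\alpha)=\|\alpha\|$), I first establish the identity
\begin{equation*}
q_h(\alpha)=\tfrac{1}{2}(\|\alpha\|-h)^+ +\sum_{l=1}^{r}(S_l(\alpha)-h)^+.
\end{equation*}
If $\|\alpha\|\le h$ both sides vanish since $S_l(\alpha)\le S_0(\alpha)\le h$. If $\|\alpha\|>h$, then $j(\alpha)\ge 1$ and an Abel summation yields $\sum_{k=1}^{j-1}k\alpha_k=\sum_{l=1}^{j-1}S_l(\alpha)-(j-1)S_j(\alpha)$; substituting this into the defining formula and cancelling the $S_j$ terms produces $\tfrac{1}{2}(\|\alpha\|-h)+\sum_{l=1}^{j(\alpha)-1}(S_l(\alpha)-h)$, which agrees with the closed form because $S_l(\alpha)>h$ precisely for $l<j(\alpha)$.

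With this formula, (i) is immediate (the tail terms vanish whenever $(\|\alpha\|-h)^+$ does, since $S_l(\alpha)\le S_0(\alpha)$), (ii) is truncation, and (iii) holds because adding $1$ to $\alpha_0$ increases $\|\alpha\|$ by $1$ without affecting $S_l(\alpha)$ for $l\ge 1$, so only the leading term changes and it grows by exactly $\tfrac{1}{2}$ under $\|\alpha\|\ge h$. For (iv) (the case $j=j'$ being trivial), I split cases: if $j'<j$, only the $S_l$ with $j'<l\le j$ drop by $1$, hence $q_h(\beta)\le q_h(\alpha)$; if $j'=j+1$, only $S_{j+1}$ changes and it increases by $1$, so using $(a+1)^+-a^+\le 1$ together with $\|\beta\|=\|\alpha\|$ gives $q_h(\beta)-q_h(\alpha)\le 1$. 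For (v), the transfer $e_{j_0}\to e_{j_0-1}$ leaves every $S_l$ fixed except $S_{j_0}(\alpha)=\|\alpha\|$, which drops by $1$; the assumption $\|\alpha\|>h$ deactivates the truncation on both sides, so $q_h(\beta)-q_h(\alpha)=-1$, and then (ii) yields $q_h(\alpha)=q_h(\beta)+1\ge \tfrac{1}{2}(\|\alpha\|-h)+1\ge \tfrac{3}{2}>1$.

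Property (vi) is then a one-line combination: if $\|\alpha\|\le h$ every quantity is zero and the inequality reads $0\ge-1$; otherwise (v) gives $q_h(\hat\alpha)=q_h(\alpha)-1$, and (iv) applied to $\hat\alpha\to\beta$ gives $q_h(\hat\alpha)\ge q_h(\beta)-1$, so $q_h(\alpha)+q_h(\hat\alpha)=2q_h(\hat\alpha)+1\ge 2q_h(\beta)-1$. For (vii), reading $e_1^{(r+1)}$ so that $\beta$ is obtained from $\widetilde\alpha$ by adding $2$ to its leading (non-degenerate) coordinate, one has $\|\beta\|=\|\widetilde\alpha\|+2\le\|\alpha\|+1$ and $S_l(\beta)=S_l(\widetilde\alpha)\le S_l(\alpha)$ for every $l\ge 1$; thus the tail sum in the closed form cannot increase, while $(\|\beta\|-h)^+-(\|\alpha\|-h)^+\le 1$, giving $q_h(\beta)-q_h(\alpha)\le\tfrac{1}{2}$. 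The main obstacle is pinning down the compact closed-form expression; once that is in place, each of (i)--(vii) reduces to a short manipulation of nonnegative truncations.
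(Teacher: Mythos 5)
Your proof is correct, and it takes a genuinely different route from the paper's. You first establish the closed form $q_h(\alpha)=\tfrac12(\|\alpha\|-h)^+ +\sum_{l=1}^r\bigl(S_l(\alpha)-h\bigr)^+$ with $S_l(\alpha)=\sum_{k=l}^r\alpha_k$, and your derivation is sound: for $\|\alpha\|>h$ one has $j(\alpha)\ge1$, the Abel summation $\sum_{k=1}^{j-1}k\alpha_k=\sum_{l=1}^{j-1}S_l(\alpha)-(j-1)S_j(\alpha)$ holds, and the truncated tail sum coincides with $\sum_{l=1}^{j(\alpha)-1}(S_l(\alpha)-h)$ because $S_l(\alpha)>h$ exactly for $l<j(\alpha)$; the case $\|\alpha\|\le h$ is covered by the convention. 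After that, each of (i)--(vii) reduces to monotonicity of the $S_l$ under the relevant modifications together with $(a+1)^+-a^+\le1$. The paper instead argues directly from the defining expression: (i)--(iii) are declared trivial, (iv), (v) and (vii) are short case analyses on how moving or adding a unit affects $j(\cdot)$ and the weighted sum, and (vi) is obtained, exactly as you do, by combining (v) with (iv) (the paper's citation of (iii) there is evidently meant to be (iv)). What your reformulation buys is uniformity and transparency, and in places slightly sharper conclusions (e.g.\ in (vii) you obtain $q_h(\beta)\le q_h(\alpha)+\tfrac12$, whereas the paper splits on $\|\widetilde\alpha\|$ relative to $h$); the paper's treatment is shorter to state but more ad hoc. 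Two minor remarks: your reading of $e_1^{(r+1)}$ in (vii) as the unit vector in the leading (zeroth) coordinate is the one consistent with the paper's own proof, which uses $q_h(\beta)=q_h(\widetilde\alpha)+1$; and in (vi) your appeal to (iv) for $\hat\alpha\to\beta$ implicitly requires $\hat\alpha_j>0$, an assumption the paper's proof makes in the same implicit way, so this is not a gap relative to the statement as given.
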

\pn Also the next linear algebraic lemma will be used in the proof
Theorem \ref{thm:3.2}. For a proof we refer to
\cite[Lemma 2.6]{Lo-0}.
\begin{lemma}\label{lem:1}
Suppose that $Q=(q_{ij})$ and $A$ are non-negative definite $N\times
N$ square matrices. Further, assume that, for some $m\in\N$, the
$m\times m$-submatrix $Q_0=(q_{ij})$, obtained erasing the last
$N-m$ rows and columns, is positive definite and $q_{ij}=0$ if
$\max\{i,j\}>m$. Then,
\begin{equation*}
\Tr(QA)\ge\lambda_{\min}(Q_0)\Tr (A_1),
\end{equation*}
\pn where $A_1$ is the submatrix obtained from $A$ by erasing the
last $N-m$ rows and columns.
\end{lemma}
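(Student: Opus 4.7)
The plan is to exploit the block structure of $Q$ to reduce the trace computation to an $m\times m$ trace, and then use simultaneous diagonalizability ideas (or a Löwner-ordering argument) to extract the minimum eigenvalue of $Q_0$.

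First, I would use the hypothesis that $q_{ij}=0$ whenever $\max\{i,j\}>m$ together with the symmetry of $A$ (which follows from its non-negative definiteness) to rewrite the full $N\times N$ trace as an $m\times m$ trace. Indeed,
\begin{equation*}
\Tr(QA)=\sum_{i,j=1}^{N}q_{ij}a_{ji}=\sum_{i,j=1}^{m}q_{ij}a_{ji}=\Tr(Q_0 A_1),
\end{equation*}
since only the upper-left $m\times m$ block of $Q$ is non-zero, and the corresponding entries of $A$ constitute precisely $A_1$. I would also remark that $A_1$, being a principal submatrix of the non-negative definite matrix $A$, is itself symmetric and non-negative definite.

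Next, to compare $\Tr(Q_0 A_1)$ with $\lambda_{\min}(Q_0)\Tr(A_1)$, I would observe that the symmetric matrix $Q_0-\lambda_{\min}(Q_0)I_m$ is non-negative definite by the variational characterization of eigenvalues. Then I would invoke the standard fact that the trace of the product of two symmetric non-negative definite matrices is non-negative; this follows by writing $\Tr(MN)=\Tr(M^{1/2}NM^{1/2})$ and noting that $M^{1/2}NM^{1/2}$ is non-negative definite. Applying this with $M=Q_0-\lambda_{\min}(Q_0)I_m$ and $N=A_1$ yields
\begin{equation*}
\Tr\bigl((Q_0-\lambda_{\min}(Q_0)I_m)A_1\bigr)\ge 0,
\end{equation*}
which rearranges exactly to $\Tr(Q_0 A_1)\ge \lambda_{\min}(Q_0)\Tr(A_1)$. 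Combining with the previous reduction gives the claim.

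As an alternative (equivalent) route, one can diagonalize $Q_0=P^*DP$ with $P$ orthogonal and $D=\mathrm{diag}(\lambda_1,\dots,\lambda_m)$, $\lambda_i\ge \lambda_{\min}(Q_0)$; then $\Tr(Q_0 A_1)=\Tr(D\,PA_1P^*)=\sum_i \lambda_i (PA_1P^*)_{ii}$, and one uses that the diagonal entries of the non-negative definite matrix $PA_1P^*$ are non-negative and sum to $\Tr(A_1)$. There is no real obstacle here; the only thing one has to be careful about is not to confuse the non-negativity of $\Tr(MN)$ for symmetric non-negative definite $M,N$ (which is true) with the much stronger — and false in general — statement that $MN$ itself is non-negative definite.
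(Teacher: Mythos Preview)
Your proof is correct. The paper does not actually prove this lemma; it simply refers the reader to \cite[Lemma 2.6]{Lo-0}, so there is no in-paper argument to compare against. Your approach --- reducing $\Tr(QA)$ to $\Tr(Q_0A_1)$ via the block structure and then using the L\"owner ordering $Q_0\ge\lambda_{\min}(Q_0)I_m$ together with $\Tr(MN)\ge 0$ for symmetric non-negative definite $M,N$ --- is the standard and natural one, and is almost certainly what the cited reference does as well.
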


\begin{proof}[Proof of Theorem $\ref{thm:3.2}$]
Throughout the proof, we simply write $c_k$ and $i^{(k)}_m$ instead
of $c_{k,r}$ and $i^{(k,r)}_m$.

Let $\ve\in ]0,1]$, $h,k\in\N$ with $h\le k\leq\kappa$, $f\in C_b^h(\R^N)$.
Further, we introduce the function
$v_{\ve}:[0,+\infty[\times\R^N\to\R$ defined by
\begin{equation*}
v_{\ve}(t,x)=\sum_{l=0}^k\langle {\mathscr
H}^{(l)}(t)D^lu_{\ve}(t,x),D^lu_{\ve}(t,x)\rangle,\qq\;\, (t,x)\in
[0,+\infty[\times\R^N, 
\end{equation*}
\pn where $u_{\ve}=T_{\ve}(\cdot)f$ and ${\mathscr H}^{(l)}(t)$
($l=0,\ldots,k$) are suitable symmetric matrices. Namely, ${\mathscr
H}^{(0)}=1$ and the matrices ${\mathscr H}^{(l)}(t)$
($l=1,\ldots,k$) are split into $c_l$ blocks $H_{m,p}^{(l)}(t)$
according to the splitting of the vector $D^lu_{\ve}$ introduced in
Subsection \ref{sub-notation}. We set
$s_p^{(l)}=\#\{\alpha\in\N_0^N:|\alpha|=i_p^{(l)}\}$. Now the
matrices $H_{m,p}^{(l)}(t)$ have the form
\begin{equation*}
H_{m,p}^{(l)}(t)=t^{q_h(i^{(l)}_m)+q_h(i^{(l)}_p)} \left\{
\begin{array}{ll}
\konst^{\eta_{p,p}^{(l)}}I_{s_p^{(l)}}, &{\rm if}~m=p,\\[2mm]
\konst^{\eta_{m,\ell (m)}^{(l)}}H^{(l)}_{m,\ell(m)}, &{\rm if}~m>c_{l-1}~{\rm and}~p=\ell(m),\\[2mm]
\konst^{\eta_{p,\ell (p)}^{(l)}}(H^{(l)}_{p,\ell(p)})^*, &{\rm if}~p>c_{l-1}~{\rm and}~m=\ell(p),\\[2mm]
0, &{\rm otherwise},
\end{array}
\right. 
\end{equation*}
\pn for any $t>0$ and some constant $s_m^{(l)}\times
s_{\ell(m)}^{(l)}$-matrices $H_{m,\ell(m)}^{(l)}$ to be determined
later on just as well as the positive parameters $\konst>1$,
$\eta_{m,m}^{(l)}$ and $\eta_{m,\ell(m)}^{(l)}$. We put the
following requirements on these parameters:
\begin{equation}
\left\{
\begin{array}{cl}
\text{(a)} &\eta^{(l)}_{\ell(m),\ell(m)}+\eta^{(l)}_{m,m}>2\eta^{(l)}_{m,\ell(m)},\\[2mm]
\text{(b)}
&\konst^{\eta^{(l)}_{\ell(m),\ell(m)}+\eta^{(l)}_{m,m}-2\eta_{m,\ell(m)}^{(l)}}>2\|H_{m,\ell(m)}\|,
\label{cond-parameter}
\end{array}
\right.
\end{equation}
\pn for any $l=1,\ldots,k$ and any $m>c_{l-1}$. Conditions
\eqref{cond-parameter} guarantee that the matrix ${\mathscr
H}^{(l)}$ ($l=1,\ldots,k$) is positive definite for any $t>0$.
Moreover, we will also need to assume that $\eta^{(0)}_{1,1}=1$ and
\begin{equation}
\left\{\begin{array}{cll} \text{(a)} &
2\eta^{(l)}_{m,m}<\eta^{(l)}_{m,\ell(m)}, & c_{l-1}<m\le c_l,
\\[2mm]
\text{(b)} & \eta^{(l)}_{m,\ell(m)}<\eta^{(l)}_{p,\ell(p)}, &
c_{l-1}<p<m\le c_l,
\\[2mm]
\text{(c)}
&2\ds\max_{\mbox{\scriptsize$m=1,\ldots,c_l$}}\eta^{(l)}_{m,m}<\ds\min_{m=1,\ldots,c_{l-1}}\eta^{(l-1)}_{m,m}
=:\eta^{(l)},\q & 
\\[2mm]
\text{(d)} & \eta^{(l)}_{m,\ell(m)}<\eta^{(l)}_{p,p}, &p\le
c_{l-1}<m,
\\[2mm]
\text{(e)} &2\eta_{m,\ell(m)}^{(l)}<
\eta^{(l)}_{\ell(m),\ell(\ell(m))},& \mbox{$m,\ell(m)>c_{l-1}$,}
 \label{fletcher-1}
\end{array}
\right.
\end{equation}
for any $l=1,\ldots,k$.
 For the moment, as it will be crucial
in the following, we assume that the constants $\eta^{(l)}_{m,m}$,
$\eta^{(l)}_{p,\ell(p)}$ ($l=1,\ldots,k$, $m=1,\ldots,c_l$,
$p=c_{l-1}+1,\ldots,c_l$), satisfying the conditions
\eqref{cond-parameter}(a) and \eqref{fletcher-1}, can be actually
determined. We will return to this point at the end and show that
this is actually the case.

\med

From Theorem \ref{thm:3.1} it follows that the function $v_\ve$ is
continuous on $[0,+\infty[\times \R^N$.  A straightforward
computation shows that it satisfies the Cauchy problem
\begin{equation*}
\left\{
\begin{array}{lll}
D_tv_{\ve}(t,x)={\mathscr A}_{\ve}v_{\ve}(t,x)+g_{\ve}(t,x),\q &t\in ]0,+\infty[, &x\in\R^N,\\[2mm]
v_{\ve}(0,x)=\ds\sum_{l=0}^h\langle {\mathscr
H}^{(l)}(0)D^lf(x),D^lf(x)\rangle,\q &&x\in\R^N,
\end{array}\right.
\end{equation*}
\pn where the function $g_\ve$ is given by
\begin{align}
g_{\ve}=\,&-2\sum_{i,j=1}^N\sum_{l=0}^kq_{ij}^{\ve}\langle {\mathscr
H}^{(l)}D^lD_iu_{\ve},D^lD_ju_{\ve}\rangle
+2\sum_{l=1}^k\langle {\mathscr H}^{(l)}[D^l,\langle B\cdot,D\rangle ]u_{\ve},D^lu_{\ve}\rangle\no\\
&\, +\sum_{l=h+1}^k \langle \dot{{\mathscr
H}^{(l)}}D^lu_{\ve},D^lu_{\ve}\rangle
+2\sum_{l=1}^k\langle {\mathscr H}^{(l)}[D^l,\Tr (Q_{\ve}D^2)]u_{\ve},D^lu_{\ve}\rangle\no\\
&\,+2\sum_{l=1}^k\langle {\mathscr H}^{(l)}[D^l,\langle F,
D^1_1\rangle ]u_{\ve},D^lu_{\ve}\rangle :=\sum_{j=1}^5g_{j,\ve},
\label{funct-g}
\end{align}
the matrix $\dot{{\mathscr H}^{(l)}}$ is obtained by entrywise differentiating
the matrix ${\mathscr H}^{(l)}$ with respect to time,
and we have $D^0D_iu_{\ve}=D_iu_{\ve}$. Note also that the
commutators here are understood coordinatewise. When $h=k$ we agree
that the first sum in the second line of \eqref{funct-g} disappears.

\med We are going to prove that we can fix $\timeT$ small enough,
but \emph{independent of $\ve$}, such that $g_{\ve}\le 0$ in
$[0,\timeT]\times\R^N$. Proposition \ref{prop:maxprinc:0}(ii) then
will yield $v_{\ve}\le \sum_{l=0}^h\langle {\mathscr
H}^{(l)}(0)D^lf,D^lf\rangle$ in $[0,\timeT]\times\R^N$. In
particular, this implies that
\begin{align*}
\langle {\mathscr
H}^{(j)}(t)D^ju_{\ve}(t,x),D^ju_{\ve}(t,x)\rangle\le \hat
C\|f\|_{C^h_b(\R^N)}^2, \quad(t,x)\in
[0,\timeT]\times\R^N,\;\,j=1,\ldots,k,
\end{align*}
\pn for some positive constant $\hat C$. Since the matrices
${\mathscr H}^{(j)}(t)$ are positive definite for any $j$ and any
$t$ if we assume \eqref{cond-parameter}, we obtain that
\eqref{stimaapriori:ave} holds in the time interval $[0,\timeT]$.
The semigroup property allows then  to extend this estimate to any
compact time interval $J\subset ]0,+\infty[$.

\med\pn  We now  turn to the estimation of $g_\ve$.
\subsubsection*{Estimating the function $g_{1,\ve}$}
Lemma \ref{lem:1} and the ellipticity condition
\eqref{cond-ellipt-1} imply that
\begin{equation}
g_{1,\ve}(t)\le -2 \nu\sum_{j=1}^{p_0}\sum_{l=0}^k\langle {\mathscr
H}^{(l)}(t)D^lD_ju_{\ve}(t),D^lD_ju_{\ve}(t)\rangle,\qq\;\,t\in
]0,+\infty[. \label{g10}
\end{equation}
\pn This is a term of negative type and it will help us to control
(most of) the remaining terms in \eqref{funct-g}. More precisely,
the right-hand side of \eqref{g10} contains all the derivatives
$D^{\alpha}u_{\ve}$ of order less than or equal to $k+1$ such that,
if we split $\alpha=(\alpha_0,\ldots \alpha_r)$ (as explained in
Subsection \ref{sub-notation}), then $\|\alpha_0\|\neq 0$. So, we
miss all the derivatives of $u$ of the type $D^{\alpha}u$ with
$\|\alpha\|\le k+1$ and $\|\alpha_0\|=0$. We will recover these
latter derivatives from (a part of) the term $g_{2,\ve}$.

Using the very definition of the matrices ${\mathscr H}^{(l)}$
($l=1,\ldots,k$) we obtain
\begin{align*}
\sum_{j=1}^{p_0}\sum_{l=1}^k&\langle {\mathscr H}^{(l)}(t)D^lD_ju_{\ve}(t),D^lD_ju_{\ve}(t)\rangle\\
\ge\,&\sum_{j=1}^{p_0}\sum_{l=1}^k\sum_{m=1}^{c_l} \konst^{\eta_{m,m}^{(l)}}t^{2q_h(i_m^{(l)})}|D_m^lD_ju_{\ve}(t)|^2\no\\
&\:-2\sum_{j=1}^{p_0}\sum_{l=1}^k\sum_{m=c_{l-1}+1}^{c_l}\hskip-1em\konst^{\eta^{(l)}_{m,\ell(m)}}t^{q_h(i_m^{(l)})+q_h(i_{\ell(m)}^{(l)})}\|H_{\ell(m),m}^{(l)}
\||D_{\ell(m)}^lD_ju_{\ve}(t)||D_{m}^lD_ju_{\ve}(t)|,
\end{align*}
\pn for any $t>0$. Thanks to \eqref{cond-parameter}(a), we can fix
$\gamma_m^{(l)}$ and $\gamma_{\ell(m)}^{(l)}$ such that
\begin{equation*}
\gamma_m^{(l)}<\eta_{m,m}^{(l)},\qq\q
\gamma_{\ell(m)}^{(l)}<\eta_{\ell(m),\ell(m)}^{(l)},\qq\q
2\eta_{m,\ell(m)}^{(l)}=\gamma_{\ell(m)}^{(l)}+\gamma_m^{(l)}.
\end{equation*}
\pn By Young's inequality (we will use the same trick several times
in the sequel) and Lemma \ref{lem:propqh}(iii) we now infer that
\begin{align*}
&2\konst^{\eta^{(l)}_{m,\ell(m)}}t^{q_h(i_m^{(l)})+q_h(i_{\ell(m)}^{(l)})} \|H_{\ell(m),m}^{(l)}\|\cdot |D_{\ell(m)}^lD_ju_{\ve}(t)|\cdot|D_{m}^lD_ju_{\ve}(t)|\no\\
&\quad\le \konst^{\gamma_{\ell(m)}^{(l)}}t^{2q_h(i_{\ell(m)}^{(l)})}\|H_{\ell(m),m}^{(l)}\|\cdot |D_{\ell(m)}^lD_ju_{\ve}(t)|^2\\
&\quad\quad +\konst^{\gamma_{m}^{(l)}}t^{2q_h(i_{m}^{(l)})}\|H_{\ell(m),m}^{(l)}\|\cdot|D_{m}^lD_ju_{\ve}(t)|^2\no\\
&\quad=
o(\konst^{\eta_{\ell(m),\ell(m)}^{(l)}})t^{2q_h(i_{\ell(m)}^{(l)})}|D_{\ell(m)}^lD_ju_{\ve}(t)|^2
+o(\konst^{\eta_{m,m}^{(l)}})t^{2q_h(i_{m}^{(l)})}|D_{m}^lD_ju_{\ve}(t)|^2,
\end{align*}
for any $t>0$. Since $j\leq p_0$,  we obtain
\begin{align}
g_{1,\ve}(t)&\le\,-2\nu|D^1_1u_{\ve}(t)|^2
-2\nu\sum_{j=1}^{p_0}\sum_{l=1}^k\sum_{m=1}^{c_{l}}
\{\konst^{\eta_{m,m}^{(l)}}+o(\konst^{\eta_{m,m}^{(l)}})\}
t^{2q_h(i_{m}^{(l)})}|D_m^lD_ju_{\ve}(t)|^2\no\\
&\le\, -2\nu|D^1_1u_{\ve}(t)|^2
-2\nu\sum_{l=2}^{k+1}\sum_{m=1}^{c_{l-1}}
\{\konst^{\eta^{(l)}}+o(\konst^{\eta^{(l)}})\}
t^{(2q_h(i_{m}^{(l)})-1)^+}|D_m^lu_{\ve}(t)|^2, \label{g-1-0}
\end{align}
for any $t>0$, where (see \eqref{fletcher-1}(c)) we have
\begin{equation*}
\eta^{(l)}=\min_{m=1,\ldots,c_{l-1}}\eta^{(l-1)}_{m,m},\qq\;\,
l=2,\ldots,k+1. 
\end{equation*}

\subsubsection*{Estimating the term $g_{2,\ve}$}
Observe that
\begin{align*}
&g_{2,\ve}(t)=\,
2\sum_{l=1}^k\sum_{m=1}^{c_l}\konst^{\eta^{(l)}_{m,m}}t^{2q_h(i_m^{(l)})}\langle
[D^l_m,\langle B\cdot,D\rangle ]u_{\ve}(t),
D^l_mu_{\ve}(t)\rangle\\
&\,
+2\sum_{l=1}^k\sum_{m=c_{l-1}+1}^{c_l}\konst^{\eta^{(l)}_{m,\ell(m)}}t^{q_h(i_m^{(l)})+q_h(i_{\ell(m)}^{(l)})}
\langle H_{m,\ell(m)}^{(l)} [D^l_{\ell(m)},\langle B\cdot,D\rangle
]u_{\ve}(t), D^l_{m}u_{\ve}(t)\rangle\\
&\,
+2\sum_{l=1}^k\sum_{m=c_{l-1}+1}^{c_l}\konst^{\eta^{(l)}_{m,\ell(m)}}t^{q_h(i_m^{(l)})+q_h(i_{\ell(m)}^{(l)})}
\langle H_{\ell(m),m}^{(l)} [D^l_{m},\langle B\cdot,D\rangle
]u_{\ve}(t), D^l_{\ell(m)}u_{\ve}(t)\rangle,
\end{align*}
\pn for any $t>0$. By virtue of Lemma \ref{lemma-crucial} and a
straightforward computation, we can write
\begin{align}
&g_{2,\ve}(t)=\,
2\sum_{l=1}^k\sum_{m=c_{l-1}+1}^{c_l}\konst^{\eta^{(l)}_{m,\ell(m)}}t^{q_h(i_m^{(l)})+q_h(i_{\ell(m)}^{(l)})}
\langle H_{m,\ell(m)}^{(l)}{\mathscr J}_m^{(l)}D^l_mu_{\ve}(t),D^l_mu_{\ve}(t)\rangle\no\\
&+2\sum_{l=1}^k\sum_{m=c_{l-1}+1}^{c_l} \sum_{p\in
A_m^{(l)}\setminus\{m\}}\konst^{\eta^{(l)}_{m,\ell(m)}}t^{q_h(i_m^{(l)})+q_h(i_{\ell(m)}^{(l)})}
\langle {\mathscr M}_{m,p}^{(l)}D^l_pu_{\ve}(t),D^l_mu_{\ve}(t)\rangle\no\\
&+2\sum_{l=1}^k\sum_{m=1}^{c_l}\sum_{s\in B^{(l)}_m}
\konst^{\eta^{(l)}_{m,m}}t^{2q_h(i_m^{(l)})} \langle {\mathscr
N}^{(l)}_{m,s}D^l_su_{\ve}(t),D^l_mu_{\ve}(t)\rangle\no\\
&+2\sum_{l=1}^k\sum_{m=c_{l-1}+1}^{c_l}\sum_{s\in
B^{(l)}_m}\konst^{\eta^{(l)}_{m,\ell(m)}}t^{q_h(i_m^{(l)})+q_h(i_{\ell(m)}^{(l)})}
\langle H^{(l)}_{\ell(m),m}{\mathscr
N}^{(l)}_{m,s}D^l_su_{\ve}(t),D^l_{\ell(m)}u_{\ve}(t)\rangle,
\label{dom-sun}
\end{align}
\pn for any $t>0$, where $A_m^{(l)}$ is given by \eqref{Aml}, and
\begin{align*}
B_m^{(l)}=\Bigl\{s:
i_s^{(l)}=i_m^{(l)}-e_{j}^{(r+1)}+e_{h}^{(r+1)}&\;\,\mbox{for some }
j=0,\ldots, r \\&\mbox{\, such that } \alpha_j>0 \mbox{,
}h\le\min\{j+1,r\}\Bigr\},
\end{align*}
if $i^{(l)}_m=(\alpha_0,\ldots,\alpha_r)$, moreover the entries of
the matrices ${\mathscr M}_{m,p}^{(l)}$ ($m=c_{l-1}+1,\ldots,c_l$,
$p\in A^{(l)}_m$) and ${\mathscr N}_{m,s}^{(l)}$ ($m=
1,\ldots,c_l,s\in B_m^{(l)}$) depend (linearly) only on the entries
of the matrix $B$.

 Since by Lemma \ref{lemma-crucial} the matrix${\mathscr J}_m^{(l)}$ has maximum rank
(which equals the number of its co\-lumns) for any $m$ and any $l$,
we can fix the matrix $H^{(l)}_{m,\ell(m)}$ such that the matrix
$-H_{m,\ell(m)}^{(l)}{\mathscr J}_m^{(l)}-
(H_{m,\ell(m)}^{(l)}{\mathscr J}_m^{(l)})^*$ is positive definite.
We set
\begin{equation*}
\iota^{(k)}
=\min_{{l=1,\ldots,k}\atop{m=c_{l-1}+1,\ldots,c_l}}\lambda_{\min}\bigl(-H_{m,\ell(m)}^{(l)}{\mathscr J}_m^{(l)}
-(H_{m,\ell(m)}^{(l)}{\mathscr J}_m^{(l)})^*\bigr).
\end{equation*}
Hence, observing that, by properties (ii) and (v) in Lemma
\ref{lem:propqh} we have
$q_h(i_{\ell(m)}^{(l)})+q_h(i_m^{(l)})=(2q_h(i_m^{(l)})-1)^+$, we
can estimate
\begin{align}
&2\sum_{l=1}^k\sum_{m=c_{l-1}+1}^{c_l}\konst^{\eta^{(l)}_{m,\ell(m)}}t^{q_h(i_m^{(l)})+q_h(i_{\ell(m)}^{(l)})}
\langle H_{m,\ell(m)}^{(l)}{\mathscr J}_m^{(l)}D^l_mu_{\ve}(t),D^l_mu_{\ve}(t)\rangle\no\\
&\qquad
\le-\iota^{(k)}\sum_{l=1}^k\sum_{m=c_{l-1}+1}^{c_l}\konst^{\eta^{(l)}_{m,\ell(m)}}t^{(2q_h(i_m^{(l)})-1)^+}
|D^l_mu_{\ve}(t)|^2, \label{fletcher-7}
\end{align}
for any $t>0$. Now, we estimate the second and the third terms in
\eqref{dom-sun}. For this purpose, we first conclude from  Lemma
\ref{lem:propqh}(iv) the following: if $p\in A_m^{(l)}$, then
$q_h(i^{(l)}_p)-1\le q_h(i^{(l)}_{\ell(m)})$; and if $s\in
B_m^{(l)}$, then $q_h(i^{(l)}_s)-1\le q_h(i^{(l)}_m)$. It follows
that
\begin{align*}
2(q_h(i^{(l)}_m)+q_h(i^{(l)}_{\ell(m)}))&\ge (2q_h(i^{(l)}_m)-1)^+
+(2q_h(i^{(l)}_p)-1)^+,
\:\:\: m>c_{l-1},\:p\in A_m^{(l)},\\[1mm]
2q_h(i^{(l)}_m)&\ge (2q_h(i^{(l)}_s)-1)^+
+(2q_h(i^{(l)}_m)-1)^+,\:\:\: m\le c_l,\:s\in B_m^{(l)},
\end{align*}
or, equivalently,
\begin{align*}\refstepcounter{equation}
t^{q_h(i^{(l)}_m)+q_h(i^{(l)}_{\ell(m)})}&\le\:
t^{\frac{(2q_h(i^{(l)}_m)-1)^+}{2} +
\frac{(2q_h(i^{(l)}_p)-1)^+}{2}},&&\,m>c_{l-1},\;\,p\in A_m^{(l)},\tag{\theequation a}\label{forc-0}
\\[1mm]
t^{q_h(i^{(l)}_m)}&\le\:t^{\frac{(2q_h(i^{(l)}_s)-1)^+}{2}
+\frac{(2q_h(i^{(l)}_m)-1)^+}{2}},&&\,m\le c_l,\;\,s\in
B_m^{(l)}, \tag{\theequation b}\label{forc-1}
\end{align*}
for any $t\in ]0,1]$. Inequalities \eqref{forc-0} and
\eqref{forc-1} will allow us to split the powers of $t$ by using
Young's inequality in the estimate of the second and third terms in
\eqref{dom-sun}. Since we are looking for a right-neighborhood of
$t=0$ where $g_{\ve}$ is non-positive, without loss of generality we
can assume that $t\in ]0,1]$. We now consider several cases
according to the values of $p$ and $s$. We handle the different
cases for $p$, respectively for $s$, parallely. First, suppose that
$p,s\le c_{l-1}$. Using conditions \eqref{fletcher-1}(c) and
\eqref{fletcher-1}(d), we obtain
$2\eta^{(l)}_{m,\ell(m)}<\eta^{(l)}+\eta^{(l)}_{m,\ell(m)}$  and
$2\eta^{(l)}_{m,m}<\eta^{(l)}+\beta^{(l)}_{m}$,  where
$\beta^{(l)}_m=\eta^{(l)}$ if $m\le c_{l-1}$ and
$\beta^{(l)}_m=\eta^{(l)}_{m,\ell(m)}$ otherwise. From this,
\eqref{forc-0}, \eqref{forc-1} and Young's inequality we can
conclude
\begin{align*}\refstepcounter{equation}
&\left
|\konst^{\eta^{(l)}_{m,\ell(m)}}t^{q_h(i_m^{(l)})+q_h(i_{\ell(m)}^{(l)})}
\langle {\mathscr M}_{m,p}^{(l)}D^l_pu_{\ve}(t),D^l_mu_{\ve}(t)\rangle\right |\\
&\quad\le
o(\konst^{\eta^{(l)}})t^{(2q_h(i_p^{(l)})-1)^+}|D^l_pu_{\ve}(t)|^2
+o(\konst^{\eta^{(l)}_{m,\ell(m)}})t^{(2q_h(i_m^{(l)})-1)^+}|D^l_m u_{\ve}(t)|^2,\label{883-3}\tag{\theequation a}\\[2.5mm]
& \left |\konst^{\eta^{(l)}_{m,m}}t^{2q_h(i_m^{(l)})}
\langle {\mathscr N}^{(l)}_{m,s}D^l_su_{\ve}(t),D^l_mu_{\ve}(t)\rangle\right |\\
&\quad\le
o(\konst^{\eta^{(l)}})t^{(2q_h(i_s^{(l)})-1)^+}|D^l_su_{\ve}(t)|^2
+o(\konst^{\beta^{(l)}_m})t^{(2q_h(i_m^{(l)})-1)^+}|D^l_m
u_{\ve}(t)|^2,\tag{\theequation b}
\end{align*}
for any $t\in ]0,1]$. The case when $m\le c_{l-1}$ and $s> c_{l-1}$
can be addressed similarly, taking now \eqref{fletcher-1}(c) into
account. We thus obtain
\begin{align}
&\left |\konst^{\eta^{(l)}_{m,m}}t^{2q_h(i_m^{(l)})}
\langle {\mathscr N}_{m,s}^{(l)}D^l_su_{\ve}(t),D^l_mu_{\ve}(t)\rangle\right |\no\\
&\quad\le
o(\konst^{\eta^{(l)}_{s,\ell(s)}})t^{(2q_h(i_s^{(l)})-1)^+}|D^l_s
u_{\ve}(t)|^2 +o(\konst^{\eta^{(l)}})t^{(2q_h(i_m^{(l)})-1)^+}|D^l_m
u_{\ve}(t)|^2, \label{fletcher-11}
\end{align}
for any $t\in ]0,1]$. Finally, we consider the case when
$m,p,s>c_{l-1}$. Observe that $p< m$ for $p\in A^{(l)}_m$, $p\neq
m$, and hence from condition \eqref{fletcher-1}(b) we obtain
$2\eta^{(l)}_{m,\ell(m)}<\eta^{(l)}_{m,\ell(m)}+\eta^{(l)}_{p,\ell(p)}$,
whereas we also have
$2\eta^{(l)}_{m,m}<\eta^{(l)}_{m,\ell(m)}+\eta^{(l)}_{s,\ell(s)}$ by
\eqref{fletcher-1}(a). These yield
\begin{align*}\refstepcounter{equation}
\label{883-4} &\left
|\konst^{\eta^{(l)}_{m,\ell(m)}}t^{q_h(i_{\ell(m)}^{(l)})+q_h(i_m^{(l)})}
\langle {\mathscr M}_{m,p}^{(l)}D^l_pu_{\ve}(t),D^l_mu_{\ve}(t)\rangle\right |\no\\
&\quad\le
o(\konst^{\eta^{(l)}_{p,\ell(p)}})t^{(2q_h(i_p^{(l)})-1)^+}|D^l_p
u_{\ve}(t)|^2
+o(\konst^{\eta^{(l)}_{m,\ell(m)}})t^{(2q_h(i_m^{(l)})-1)^+}|D^l_m
u_{\ve}(t)|^2,\tag{\theequation a}
\\[2.5mm]
&\left |\konst^{\eta^{(l)}_{m,m}}t^{2q_h(i_m^{(l)})}
\langle {\mathscr N}_{m,s}^{(l)}D^l_su_{\ve}(t),D^l_mu_{\ve}(t)\rangle\right |\no\\
&\quad\le
o(\konst^{\eta^{(l)}_{s,\ell(s)}})t^{(2q_h(i_s^{(l)})-1)^+}|D^l_su_{\ve}(t)|^2
+o(\konst^{\eta^{(l)}_{m,\ell(m)}})t^{(2q_h(i_m^{(l)})-1)^+}|D^l_m
u_{\ve}(t)|^2, \tag{\theequation b}\label{fletcher-9}
\end{align*}
for any $t\in ]0,1]$.  We now estimate the fourth term in
\eqref{dom-sun}. First, notice that
\begin{equation*}
2(q_h(i_m^{(l)})+q_h(i_{\ell(m)}^{(l)}))\geq (2q_h(i_s^{(l)})-1)^+
+(2q_h(i_{\ell(m)}^{(l)})-1)^+,
\end{equation*}
by Lemma \ref{lem:propqh}(iv), or, equivalently
\begin{equation*}
t^{q_h(i^{(l)}_m)+q_h(i^{(l)}_{\ell(m)})}\le
t^{\frac{(2q_h(i^{(l)}_s)-1)^+}{2} +
\frac{(2q_h(i^{(l)}_{\ell(m)})-1)^+}{2}}, \qq\;\,t\in
]0,1],\;\,m>c_{l-1},\;\,s\in B_m^{(l)}.
\end{equation*}
If $\ell(m)>c_{l-1}$, then because of $2\eta_{m,\ell(m)}^{(l)}<
\eta^{(l)}_{\ell(m),\ell(\ell(m))}$ (see \eqref{fletcher-1}(e)) we
can write
\begin{align}
&\left
|\konst^{\eta^{(l)}_{m,\ell(m)}}t^{q_h(i_m^{(l)})+q_h(i_{\ell(m)}^{(l)})}
\langle H_{\ell(m),m}^{(l)}{\mathscr N}_{m,s}^{(l)}D^l_su_{\ve}(t),D^l_{\ell(m)}u_{\ve}(t)\rangle\right |\no\\
&\quad\le
o(\konst^{\beta^{(l)}_{s}})t^{(2q_h(i_s^{(l)})-1)^+}|D^l_su_{\ve}(t)|^2
+o(\konst^{\eta^{(l)}_{\ell(m),\ell(\ell(m))}})t^{(2q_h(i_{\ell(m)}^{(l)})-1)^+}|D^l_{\ell(m)}
u_{\ve}(t)|^2,\label{eq:sellm}
\end{align}
for all $t\in]0,1]$, where again $\beta^{(l)}_{s}=\eta^{(l)}$ if
$s\le c_{l-1}$ and $\beta^{(l)}_{s}=\eta^{(l)}_{s,\ell(s)}$
otherwise. On the other hand, if $\ell(m)\leq c_{l-1}$, then  we
have $2\eta_{m,\ell(m)}^{(l)}< \eta^{(l)}$ (see
\eqref{fletcher-1}(c) and \eqref{fletcher-1}(d)). Thus, we may
conclude that
\begin{align}
&\left
|\konst^{\eta^{(l)}_{m,\ell(m)}}t^{q_h(i_m^{(l)})+q_h(i_{\ell(m)}^{(l)})}
\langle H_{\ell(m),m}^{(l)}{\mathscr N}_{m,s}^{(l)}D^l_su_{\ve}(t),D^l_{\ell(m)}u_{\ve}(t)\rangle\right |\no\\
&\quad\le
o(\konst^{\beta^{(l)}_{s}})t^{(2q_h(i_s^{(l)})-1)^+}|D^l_su_{\ve}(t)|^2
+o(\konst^{\eta^{(l)}})t^{(2q_h(i_{\ell(m)}^{(l)})-1)^+}|D^l_{\ell(m)}
u_{\ve}(t)|^2,\label{eq:sellm2}
\end{align}
holds for $t\in ]0,1]$. Therefore, by summing up
\eqref{dom-sun},\eqref{fletcher-7}, (3.20)--\eqref{eq:sellm2}, we can deduce that
\begin{align}
g_{2,\ve}(t)\le&\,
-\iota^{(k)}\sum_{l=1}^k\sum_{m=c_{l-1}+1}^{c_l}\Bigl(\konst^{\eta^{(l)}_{m,\ell(m)}}+o(\konst^{\eta^{(l)}_{m,\ell(m)}})\Bigr)
t^{(2q_h(i_m^{(l)})-1)^+}|D^l_m u_{\ve}(t)|^2\no\\
&\,+\sum_{l=1}^k\sum_{m=1}^{c_{l-1}}
o(\konst^{\eta^{(l)}})t^{(2q_h(i_m^{(l)})-1)^+}|D^l_mu_{\ve}(t)|^2
\label{estim-good-term}
\end{align}
holds for any $t\in ]0,1]$.

\subsubsection*{Estimating the term $g_{3,\ve}$}
As it has been already remarked, this term occurs only if $h<k$. We
begin by estimating the term
$t^{q_h(i_m^{(l)})+q_h(i_{\ell(m)}^{(l)})-1}\konst^{\eta^{(l)}_{m,\ell(m)}}|D^l_{\ell(m)}u_{\ve}(t)|\cdot|D^l_mu_{\ve}(t)|$,
when $l>h$ and $m>c_{l-1}$; note that, by Lemma \ref{lem:propqh}(v),
$q_h(i_m^{(l)})+q_h(i_{\ell(m)}^{(l)})-1>0$. By
\eqref{fletcher-1}(b), \eqref{fletcher-1}(c) and
\eqref{fletcher-1}(d), we have that
$2\eta^{(l)}_{m,\ell(m)}<\eta^{(l)}+\eta^{(l)}_{m,\ell(m)}$, if
$\ell(m)\le c_{l-1}$, and
$2\eta^{(l)}_{m,\ell(m)}<\eta^{(l)}_{m,\ell(m)}+\eta_{\ell(m),\ell(\ell(m))}^{(l)}$
if $\ell(m)>c_{l-1}$. Hence, we can estimate
\begin{align}
&t^{q_h(i_m^{(l)})+q_h(i_{\ell(m)}^{(l)})-1}\konst^{\eta^{(l)}_{m,\ell(m)}}|D^l_{\ell(m)}u_{\ve}(t)|\cdot|D^l_mu_{\ve}(t)|\no\\
&\quad\le
o(\konst^{\beta^{(l)}_{\ell(m)}})t^{2q_h(i^{(l)}_{\ell(m)})-1}|D^l_{\ell(m)}u_{\ve}(t)|^2
+o(\konst^{\eta^{(l)}_{m,\ell(m)}})t^{2q_h(i^{(l)}_{m})-1}|D^l_mu_{\ve}(t)|^2,
\label{spino-1}
\end{align}
\pn for any $t>0$, where $\beta^{(l)}_{\ell(m)}$ is as above. From
\eqref{spino-1} and taking condition \eqref{fletcher-1}(a) into
account, we now get
\begin{align}
\no g_{3,\ve}(t) =\;&2\sum_{l=h+1}^k
\sum_{m=1}^{c_l}q_h(i_m^{(l)})\konst^{\eta_{m,m}^{(l)}}t^{2q_h(i_m^{(l)})-1}|D^l_mu_{\ve}(t)|^2\\
&\:+2\sum_{l=h+1}^k
\sum_{m=c_{l-1}+1}^{c_l}\bigl(q_h(i_m^{(l)})+q_h(i_{\ell(m)}^{(l)})\bigr)\konst^{\eta_{m,\ell(m)}^{(l)}}t^{q_h(i_m^{(l)})+q_h(i_{\ell(m)}^{(l)})-1}\no\\
&\quad\quad\quad\quad\quad\quad\quad\quad\quad\times
\langle H_{m,\ell(m)}D^l_{\ell(m)}u_{\ve}(t),D^l_{m}u_{\ve}(t)\rangle\no\\
\leq&
\sum_{l=h+1}^k\sum_{m=1}^{c_{l-1}}o(\konst^{\eta^{(l)}})t^{2q_h(i_m^{(l)})-1}|D^l_mu_{\ve}(t)|^2
\label{Hl'}\\
&+\sum_{l=h+1}^k\sum_{m=c_{l-1}+1}^{c_l}o(\konst^{\eta^{(l)}_{m,\ell(m)}})t^{2q_h(i_m^{(l)})-1}|D^l_mu_{\ve}(t)|^2,\no
\end{align}
for any $t>0$.
\subsubsection*{Estimating the terms $g_{4,\ve}$ and $g_{5,\ve}$} We begin with $g_{4,\ve}$, the case of $g_{5,\ve}$ being completely analogous.
Let us observe that we have
\begin{equation}\label{eq:commTrQ}
[D^l_m,\Tr(Q_{\ve}D^2)]u_{\ve}=\sum_{i,j=1}^N[D^l_m,
q_{ij}^{\ve}D_{ij}]u_{\ve} =\sum_{i,j=1}^{p_0}[D^l_m,
q_{ij}D_{ij}]u_{\ve} =\sum_{z=2}^{l+1}{\mathscr
P}^{(l,z)}_mD^zu_{\ve},
\end{equation}
for any $m=1,\ldots,c_l$ and some matrices  ${\mathscr P}^{(l,z)}_m$
whose entries linearly depend only on the derivatives (of order at
least $1$ and at most $l$) of the functions $q_{ij}$
$(i,j=1,\ldots,p_0)$. In particular, these matrices are independent
of $\ve$. Moreover, if we split the matrices ${\mathscr
P}^{(l,z)}_m$ into sub-blocks $P^{(l,z)}_{m,s}$ ($m=1,\dots,c_l$,
$s=1,\dots,c_z$) according to the rule in Subsection
\ref{sub-notation}, it follows that $P^{(l,z)}_{m,s}=0$ if
$s>c_{z-1}$. To see the above let $\alpha\in \N_0^{N}$ be a
multi-index with $\|\alpha\|=l$,
$|\alpha|=(\alpha_0,\alpha_1,\dots,\alpha_r)\in \N_0^{r+1}$. From
the second equality in \eqref{eq:commTrQ} (which is immediate if we
recall that $q_{ij}^{\ve}$ is constant if at least one of $i$
and $j$ are greater than $p_0$) it follows that the terms appearing
in \eqref{eq:commTrQ} and obtained from
$[D^\alpha,\Tr(Q_{\ve}D^2)]u_{\ve}$ are $D^\beta u_\ve$ with
coefficients in front depending on the derivatives of $q_{ij}$,
$i,j\leq p_0$, and with some $\beta\in \N_0^{N}$ such that
$|\beta|=(\beta_0,\ldots,\beta_r)$ with $1\le\beta_0\le\alpha_0+2$,
$\beta_j\le\alpha_j$ for any $j=1,\ldots,r$ and
$\|\beta\|\le\|\alpha\|+1$. In particular, since $\beta_0>1$, then
$|\beta|=i_s^{(z)}$ with $s\leq c_{z-1}$, where $z=\|\beta\|$.
Denote by $C^{(l)}_m$ the indices $i^{(z)}_s$ obtained in this way
from multi-indices $\alpha$ with $|\alpha|=i^{(l)}_m$. By Lemma
\ref{lem:propqh}(vii) we have $q_h(i^{(l)}_m)\geq q_h(i_s^{(z)})-1$
for $i_s^{(z)}\in C_m^{(l)}$, and $q_h(i^{(l)}_{\ell(m)})\geq
q_h(i_s^{(z)})-1$ for $i_s^{(z)}\in C_{\ell(m)}^{(l)}$. These
inequalities will allow us to split the powers of $t$ by using
Young's inequality. Hence we can write
\begin{align}
&g_{4,\ve}(t)=\sum_{l=1}^k\sum_{m=1}^{c_l}\sum_{z=2}^{l+1}\konst^{\eta^{(l)}_{m,m}}t^{2q_h(i_m^{(l)})}
\langle {\mathscr P}^{(l,z)}_mD^zu_{\ve}(t),D^l_mu_{\ve}(t)\rangle\no\\
&\,
+\sum_{l=1}^k\sum_{m=c_{l-1}+1}^{c_l}\sum_{z=2}^{l+1}\konst^{\eta^{(l)}_{m,\ell(m)}}t^{q_h(i_m^{(l)})+q_h(i_{\ell(m)}^{(l)})}
\langle H_{m,\ell(m)}^{(l)}{\mathscr P}^{(l,z)}_{\ell(m)}
D^zu_{\ve}(t),D^l_mu_{\ve}(t)\rangle\no\\
&\,
+\sum_{l=1}^k\sum_{m=c_{l-1}+1}^{c_l}\sum_{z=2}^{l+1}\konst^{\eta^{(l)}_{m,\ell(m)}}t^{q_h(i_m^{(l)})+q_h(i_{\ell(m)}^{(l)})}
\langle H_{\ell(m),m}^{(l)}{\mathscr P}^{(l,z)}_{m}
D^zu_{\ve}(t),D^l_{\ell(m)}u_{\ve}(t)\rangle,\label{eq:g4e}
\end{align}
for any $t>0$. In the following, we assume again $t\in ]0,1]$ and
denote by $C$ positive constants, independent of $\nu$, $t$ and
$\konst$, which may vary from line to line. We can estimate the
summands in the first term in the right-hand side of \eqref{eq:g4e}
as follows:
\begin{align}
&\sum_{z=2}^{l+1}\konst^{\eta^{(l)}_{m,m}}t^{2q_h(i_m^{(l)})}\big
|\langle {\mathscr
P}^{(l,z)}_{m}D^zu_{\ve}(t),D^l_mu_{\ve}(t)\rangle\big |\no\\
&\quad\le\, C\sqrt{\nu} \konst^{\eta^{(l)}_{m,m}}t^{2q_h(i_m^{(l)})}\sum_{z=2}^l\sum_{s: i^{(z)}_s\in C^{(l)}_m}|D^z_su_{\ve}(t)|\cdot |D^l_mu_{\ve}(t)|\no\\
&\quad\quad\,+C\sqrt{\nu}
\konst^{\eta^{(l)}_{m,m}}t^{2q_h(i_m^{(l)})}\sum_{s: i^{(l+1)}_s\in
C^{(l)}_m}|D^{l+1}_su_{\ve}(t)|\cdot |D^l_mu_{\ve}(t)|,
\label{darm-2}
\end{align}
for any $m\le c_l$. Let us consider the first term in the right hand side of
\eqref{darm-2}. Since we have $z\le l$ here, we can use
\eqref{fletcher-1}(c) and  estimate
\begin{align*}
&\sqrt{\nu} \konst^{\eta^{(l)}_{m,m}}t^{2q_h(i_m^{(l)})}\sum_{z=2}^l
\sum_{s: i^{(z)}_s\in C^{(l)}_m}|D^z_su_{\ve}(t)|\cdot |D^l_mu_{\ve}(t)|\no\\
&\le C\bigg (o(\konst^{\beta^{(l)}_m})t^{(2q_h(i_m^{(l)})-1)^+}
|D^l_mu_{\ve}(t)|^2\hskip-0.2em+\hskip-0.2em\sum_{z=2}^l\sum_{s=1}^{c_{z-1}}o(\konst^{\eta^{(z)}})t^{(2q_h(i_s^{(z)})-1)^+}\nu
|D^z_su_{\ve}(t)|^2\bigg ),
\end{align*}
where $\beta^{(l)}_m=\eta^{(l)}$ if $m\le c_{l-1}$ and
$\beta^{(l)}_m=\eta^{(l)}_{m,\ell(m)}$ otherwise
On the other hand, in the case when $z=l+1$ we use conditions
\eqref{fletcher-1}(a), \eqref{fletcher-1}(c)  to obtain
$2\eta^{(l)}_{m,m}<\beta^{(l)}_{m}+\eta^{(l+1)}$. Thus, we can
estimate by using Young's inequality
\begin{align*}
&C\sqrt{\nu} \konst^{\eta^{(l)}_{m,m}}t^{2q_h(i_m^{(l)})}
\sum_{s: i^{(l+1)}_s\in C^{(l)}_m}|D^{l+1}_su_{\ve}(t)|\cdot |D^l_mu_{\ve}(t)|\no\\
&\le C\bigg (o(\konst^{\beta^{(l)}_m})t^{(2q_h(i_m^{(l)})-1)^+}
|D^l_mu_{\ve}(t)|^2\hskip-0.4em+\hskip-0.2em\sum_{s=1}^{c_{l}}o(\konst^{\eta^{(l+1)}})t^{(2q_h(i_s^{(l+1)})-1)^+}
\nu|D^{l+1}_su_{\ve}(t)|^2\bigg ).
\end{align*}
The summands in the second and the third terms in \eqref{eq:g4e} can
be estimated likewise. By \eqref{fletcher-1}(c),
\eqref{fletcher-1}(d) and \eqref{fletcher-1}(e) we have
$\eta^{(l)}_{m,\ell(m)}<\eta^{(l+1)}$,
$2\eta^{(l)}_{m,\ell(m)}<\eta^{(l)}$ and
$2\eta^{(l)}_{m,\ell(m)}<2\eta^{(l)}_{\ell(m),\ell(\ell(m))}$ (this
latter for $\ell(m)>c_{l-1}$), so by Young's inequality we can
deduce
\begin{align*}
&\sum_{z=2}^{l+1}\konst^{\eta^{(l)}_{m,\ell(m)}}t^{q_h(i_m^{(l)})+q_h(i_{\ell(m)}^{(l)})}
\langle H_{m,\ell(m)}^{(l)}{\mathscr P}^{(l,z)}_{\ell(m)}
D^zu_{\ve}(t),D^l_mu_{\ve}(t)\rangle\\
&\:\leq
C\sqrt{\nu}\,\konst^{\eta^{(l)}_{m,\ell(m)}}t^{q_h(i_m^{(l)})+q_h(i_{\ell(m)}^{(l)})}
\sum_{z=2}^{l+1}\sum_{s: i^{(z)}_s\in C^{(l)}_{\ell(m)}}|D^z_su_{\ve}(t)|\cdot |D^l_mu_{\ve}(t)|\\
&\:\leq o(\konst^{\eta^{(l)}_{m,\ell(m)}})t^{(2q_h(i_m^{(l)})-1)^+}
|D^l_mu_{\ve}(t)|^2+\sum_{z=2}^{l+1}\sum_{s=1}^{c_{z-1}}o(\konst^{\eta^{(z)}})t^{(2q_h(i_s^{(z)})-1)^+}\nu
|D^z_su_{\ve}(t)|^2 \intertext{and}
&\sum_{z=2}^{l+1}\konst^{\eta^{(l)}_{m,\ell(m)}}t^{q_h(i_m^{(l)})+q_h(i_{\ell(m)}^{(l)})}
\langle H_{\ell(m),m}^{(l)}{\mathscr P}^{(l,z)}_{m}
D^zu_{\ve}(t),D^l_{\ell(m)}u_{\ve}(t)\rangle\\
&\:\leq
C\sqrt{\nu}\,\konst^{\eta^{(l)}_{m,\ell(m)}}t^{q_h(i_m^{(l)})+q_h(i_{\ell(m)}^{(l)})}\sum_{z=2}^{l+1}
\sum_{s: i^{(z)}_s\in C^{(l)}_{m}}|D^z_su_{\ve}(t)|\cdot |D^l_{\ell(m)}u_{\ve}(t)|\\
&\:\leq
o(\konst^{\beta^{(l)}_{\ell(m)}})t^{(2q_h(i_{\ell(m)}^{(l)})-1)^+}
|D^l_{\ell(m)}u_{\ve}(t)|^2\hskip-0.2em+\hskip-0.2em\sum_{z=2}^{l+1}\sum_{s=1}^{c_{z-1}}o(\konst^{\eta^{(z)}})t^{(2q_h(i_s^{(z)})-1)^+}\nu
|D^z_su_{\ve}(t)|^2,
\end{align*}
for $t\in ]0,1]$, $m>c_{l-1}$, where, again,
$\beta^{(l)}_{\ell(m)}=\eta^{(l)}$, if $\ell(m)\le c_{l-1}$, and
$\beta^{(l)}_{\ell(m)}=\eta^{(l)}_{\ell(m),\ell(\ell(m))}$,
otherwise. \pn By putting everything together, we obtain
\begin{align}
g_{4,\ve}(t)&\leq\nu\sum_{l=1}^{k+1}\sum_{m=1}^{c_{l-1}}o(\konst^{\eta^{(l)}})
t^{(2q(i^{(l)}_m)-1)^+}|D^l_mu_{\ve}(t)|^2\no\\
&\quad+
\sum_{l=1}^{k}\sum_{m=c_{l-1}+1}^{c_l}o(\konst^{\eta^{(l)}_{m,\ell(m)}})
t^{(2q(i^{(l)}_m)-1)^+}|D^l_mu_{\ve}(t)|^2,\qq\;\,t\in ]0,1].
\label{fletcher}
\end{align}

\med\pn Just in the same way, we can estimate the function
$g_{5,\ve}$ and get
\begin{align}
g_{5,\ve}(t)\le\,\,&
\nu\sum_{l=1}^{k}\sum_{m=1}^{c_{l-1}}o(\konst^{\eta^{(l)}})t^{(2q_h(i^{(l)}_m)-1)^+}|D^l_mu_{\ve}(t)|^2\no\\
&
+\sum_{l=1}^{k}\sum_{m=c_{l-1}+1}^{c_l}o(\konst^{\eta^{(l)}_{m,\ell(m)}})
t^{(2q_h(i^{(l)}_m)-1)^+}|D^l_mu_{\ve}(t)|^2, \qq\;\,t\in ]0,1].
\label{last-estim}
\end{align}

\subsubsection*{Final estimate of the function $g_{\ve}$}
Now, collecting \eqref{g-1-0}, \eqref{estim-good-term}, \eqref{Hl'},
\eqref{fletcher} and \eqref{last-estim} together, we get
\begin{align}
g_{\ve}(t)\le\;& -2\nu\sum_{l=1}^k\sum_{m=1}^{c_{l-1}}
\{\konst^{\eta^{(l)}}+o(\konst^{\eta^{(l)}})\}
t^{(2q_h(i^{(l)}_m)-1)^+}|D^l_mu_{\ve}(t)|^2\no\\
&-\iota^{(k)}
\sum_{l=1}^k\sum_{m=c_{l-1}+1}^{c_l}\{\konst^{\eta^{(l)}_{m,\ell(m)}}+o(\konst^{\eta^{(l)}_{m,\ell(m)}})\}
t^{(2q_h(i^{(l)}_m)-1)^+}
|D^l_mu_{\ve}(t)|^2\no\\
&
-2\nu\sum_{m=1}^{c_k}\{\konst^{\eta^{(k+1)}}+o(\konst^{\eta^{(k+1)}})\}t^{(2q_h(i^{(k+1)}_m)-1)^+}|D^{k+1}_mu_{\ve}(t)|^2,
\label{estim-gve}
\end{align}
\pn for any $t\in ]0,1]$. \med\pn

If we now fix the parameter $\konst$ sufficiently large, condition
\eqref{cond-parameter}(b) is satisfied and, for an even larger
constant $\konst$, the terms in right-hand side of \eqref{estim-gve}
will be negative, provided that one can choose the parameters
$\eta^{(l)}_{m,m}$ and $\eta^{(l)}_{p,\ell(p)}$ such that conditions
\eqref{cond-parameter}(a) and \eqref{fletcher-1}, which we have used
in the previous estimates, are satisfied. Hence, the last part of
the proof is devoted to address this point, and leads us to the
conclusion of the proof.

\subsubsection*{Choice of the parameters $\eta^{(l)}_{m,m}$ and $\eta^{(l)}_{p,\ell(p)}$} We now
show that we can fix all the constants $\eta^{(l)}_{m,m}$ and
$\eta^{(l)}_{p,\ell(p)}$ such that the conditions
\eqref{cond-parameter}(a) and \eqref{fletcher-1} are satisfied. For
each $l=1,\dots,k+1$ we will take a positive, strictly decreasing
sequence $\{a^{(l)}_n\}$ with $a^{(l)}_n<1$, and set
$\eta^{(l)}_{n,m}:=a^{(l)}_{n}a^{(l)}_{m}$. By this restriction,
condition \eqref{cond-parameter}(a) will be satisfied. So from now
 we concentrate only on \eqref{fletcher-1}. Notice that for
$c_{l-1}<m<p$ we have $\ell(m)< \ell(p)$, so \eqref{fletcher-1}(b)
is automatically satisfied by monotonicity. Note also that for such
$m$ we have $\ell(m)<m$. Hence, if we choose
$a^{(l)}_{n+1}<\frac{1}{2}a^{(l)}_{n}$ for all $n$, also
\eqref{fletcher-1}(a) and \eqref{fletcher-1}(e) will be satisfied.
Now we turn to the actual construction keeping all the above
requirements on $a_n^{(l)}$.
 First we choose $a^{(1)}_n$ for $n=1,\dots,c_1=r+1$ according to the
 above. Then, for $l\geq 2$ we proceed inductively, first taking
 $a^{(l)}_1<\frac{1}{2}\sqrt{\eta^{(l)}}$ and, then, choosing $a_n^{(l)}$ ($n=2,\ldots,c_l$) satisfying
$a_n^{(l)}<\frac{1}{2}a_{n-1}^{(l)}$ for any $n$ and with
$a_{c_{l-1}+1}^{(l)}<a_{c_{l-1}}^{(l)}a_{c_{l-1}}^{(l)}$. The first
condition implies that \eqref{fletcher-1}(c) is satisfied. On the
other hand, since $a_n^{(l)}<1$, the latter condition implies
\eqref{fletcher-1}(d). Indeed, for $m>c_{l-1}$ and $p\leq c_{l-1}$
we have, $a_ma_{\ell(m)}\le
a_{c_{l-1}+1}<a_{c_{l-1}}a_{c_{l-1}}<a_p^2$.
\end{proof}

\begin{rem}Notice that the above proof works also for other
functions $q:\N_0^{r+1}\to\R_+$ replacing $q_h$, as long as this
function $q$ has the properties similar to that of $q_h$ as listed
in Lemma \ref{lem:propqh}.
\end{rem}


\section{Construction of the semigroup}
\label{sec-sem}

 In this section we prove
that, for any $f\in C_b(\R^N)$, the Cauchy problem
\begin{equation}
\left\{
\begin{array}{lll}
D_tu(t,x)={\mathscr A}u(t,x), \q &t>0, &x\in\R^N,\\[2mm]
u(0,x)=f(x), &&x\in\R^N,
\end{array}
\right. \label{HCP}
\end{equation}
\pn admits a unique classical solution $u$, and consequently we can
associate a semigroup of bounded operators in $C_b(\R^N)$ with the
operator ${\mathscr A}$.

\begin{theorem}\label{thm:constr}
Suppose that Hypotheses \ref{ipos-1} are satisfied. Then the
following assertions hold:
\begin{enumerate}[\rm(i)]
\item
For any $f\in C_b(\R^N)$ there exists a unique classical solution
$u$ to problem \eqref{HCP}.
\item
The family $\{T(t)\}$, defined by $T(t)f:=u(t,\cdot)$ for any $t>0$,
where $u$ is the classical solution to problem \eqref{HCP}
corresponding to the initial value $f$, is a positivity preserving
 semigroup of linear contractions in $C_b(\R^N)$.
\item
If $f\in C_c(\R^N)$, then $T(t)f$ converges to $f$, as $t\to 0^+$,
uniformly in $\R^N$.
\item
For any $f\in C_b(\R^N)$ and any multi-index $\alpha\in\N_0^N$, with
$\|\alpha\|\le\kappa-1$, the derivative $D^{\alpha}T(\cdot)f$ exists
in the classical sense in $]0,+\infty[\times \R^N$ and it is a
continuous function. Moreover, there exists a positive constant $C$,
depending only on $\omega$, $h$ and $\|\alpha\|$ such that, for any
$f\in C^h_b(\R^N)$ and any $\alpha$ as above, we have
\begin{equation}
\|D^{\alpha}T(t)f\|_{C_b(\R^N)}\le Ct^{-q_h(|\alpha|)}e^{\omega
t}\|f\|_{C^h_b(\R^N)},\qq\;\,t\in ]0,+\infty[.
\label{stimaapriori:a0}
\end{equation}
\end{enumerate}
\end{theorem}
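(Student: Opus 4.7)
The plan is to construct $\{T(t)\}$ as a limit of the approximating semigroups $\{T_\ve(t)\}$ from Section~\ref{main-sec}, taking $\ve\to 0^+$ along a subsequence. Fix $f\in C_b(\R^N)$. By Theorem~\ref{thm:3.2} with $h=0$, the derivatives $D^{\alpha}T_\ve(\cdot)f$ with $\|\alpha\|\le\kappa$ are bounded on each compact subset of $]0,+\infty[\times\R^N$ uniformly in $\ve\in\,]0,1]$; combined with the equation $D_tT_\ve(t)f=\mathscr{A}_\ve T_\ve(t)f$, this also controls the time derivatives and their differentiates. Via Arzel\`a--Ascoli and a diagonal extraction I would extract a subsequence $\ve_n\to 0$ along which $T_{\ve_n}(\cdot)f\to u$ in $C^{1,2}_{\rm loc}(]0,+\infty[\times\R^N)$, together with convergence of all spatial derivatives up to order $\kappa-1$ (the loss of one derivative is standard, as the uniform bound is on $\kappa$ derivatives). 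Since $\ve_n\Delta_\star T_{\ve_n}(\cdot)f\to 0$ locally uniformly (the factor $\ve_n$ kills the uniformly bounded second derivatives), the limit $u$ is a classical solution of $D_tu=\mathscr{A}u$ on $]0,+\infty[\times\R^N$, and the bound $\|u(t,\cdot)\|_\infty\le\|f\|_\infty$ transfers directly from \eqref{stimaapriori:1} applied to each $T_{\ve_n}$.

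The delicate step, which I expect to be the main obstacle, is establishing continuity of $u$ up to $t=0$ with $u(0,\cdot)=f$. I would first treat $f\in C_c^\infty(\R^N)$: then $\mathscr{A}_\ve f$ has compact support and $\|\mathscr{A}_\ve f\|_\infty$ is bounded uniformly in $\ve\in\,]0,1]$ (the term $\ve\Delta_\star f$ is harmless), and the function $w_\ve=T_\ve(\cdot)f-f$ solves an inhomogeneous Cauchy problem with zero initial datum and bounded source, so Proposition~\ref{prop:maxprinc:0}(ii) gives $\|w_\ve(t,\cdot)\|_\infty\le t\|\mathscr{A}_\ve f\|_\infty\le Ct$ uniformly in $\ve$. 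Passing to the limit yields $\|u(t,\cdot)-f\|_\infty\le Ct$, which already proves~(iii) in this case. For general $f\in C_c(\R^N)$, (iii) then follows by approximation since $T_\ve(t)$ is a contraction by the maximum principle. For arbitrary $f\in C_b(\R^N)$, pointwise continuity at any $x_0$ is obtained by localization: decompose $f=\chi f+(1-\chi)f$ with $\chi\in C_c^\infty$ equal to~$1$ near~$x_0$, so that the cutoff piece is covered by the previous case, while the remote piece is controlled by a standard barrier argument based on $\mathscr{A}$ applied to $|x-x_0|^2$ and the growth conditions of Hypotheses~\ref{ipos-1} (cf.~\cite{bertoldi-lorenzi}).

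With a classical solution in hand, uniqueness in~(i) is immediate from Proposition~\ref{prop:maxprinc:0}(ii) applied to $\mathscr{L}=\mathscr{A}$ with $m=p_0$ (the growth bounds required there follow from Hypotheses~\ref{ipos-1}), since the difference of two classical solutions is a bounded solution of the homogeneous problem with zero initial datum. Linearity and the semigroup identity $T(t+s)=T(t)T(s)$ then follow from uniqueness (both sides are the unique classical solution with initial datum $T(s)f$), while contractivity and positivity preservation pass from the corresponding properties of each $T_{\ve_n}(t)$ to the limit, giving~(ii). Finally, for~(iv), the derivative estimates on compact intervals $J\subset\,]0,+\infty[$ are obtained by passing~\eqref{stimaapriori:ave} to the limit along $\ve_n\to 0$ (the restriction $\|\alpha\|\le\kappa-1$ being forced by the one-derivative loss in the Arzel\`a--Ascoli compactness); to extend to all $t>0$ I would use the semigroup identity $T(t)=T(T_0)T(t-T_0)$ together with the contractivity of $T(t-T_0)$ in sup norm and the already-known bound of $T(T_0)$ in $C_b^{\|\alpha\|}$, the exponential $e^{\omega t}$ in~\eqref{stimaapriori:a0} absorbing any constants that accumulate through this iteration.
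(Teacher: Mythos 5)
Your proposal is correct and follows essentially the same route as the paper: pass to the limit $\ve\to 0$ along a subsequence using the uniform derivative bounds of Theorem \ref{thm:3.2} and a compactness/diagonal argument (with the same one-derivative loss explaining the restriction $\|\alpha\|\le\kappa-1$), obtain continuity at $t=0$ first for smooth compactly supported data via $\|T_\ve(t)f-f\|_\infty\le t\|{\mathscr A}_\ve f\|_\infty$, then by approximation and localization, and derive uniqueness, the semigroup law, contractivity, positivity and \eqref{stimaapriori:a0} from the maximum principle and the semigroup identity exactly as in the text. The only minor divergence is the treatment of the remote piece $(1-\chi)f$ in the localization step, where the paper uses positivity together with $T_{\ve_n}(t)(1-\psi)=1-T_{\ve_n}(t)\psi$ and the already established convergence $T(t)\psi\to\psi$, while you invoke a quadratic Lyapunov-type barrier; both are standard and rest on the same growth hypotheses.
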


\begin{proof} (i) First of all, notice that uniqueness follows immediately form the maximum principle,
Proposition \ref{prop:maxprinc:0}. Throughout the proof, we denote by $C$ positive constants,
independent of $\ve\in ]0,1[$, which may vary from line to line. As
a first step, we show that, for any $\omega>0$, there exists a
positive constant $\hat C=\hat C(\omega)$, independent of $\ve$,
such that
\begin{equation}
\|D^{\alpha}T_{\ve}(t)f\|_{\infty}\le \hat
Ct^{-q_h(|\alpha|)}e^{\omega t} \|f\|_{C^h_b(\R^N)},\qq\;\,t\in
]0,+\infty[,\;\,\ve\in ]0,1], \label{stimaapriori:ave-bis}
\end{equation}
for any $f\in C^h_b(\R^N)$ and any $\|\alpha\|\le\kappa$. Estimate
\eqref{stimaapriori:ave-bis} follows from the semigroup law and from
\eqref{stimaapriori:ave}. Indeed, fix $\omega>0$ and let
$C_0=\min\{1,\inf_{t\in [1,+\infty[}t^{-q_h(|\alpha|)}e^{\omega
t}\}$. Splitting $T_{\ve}(t)=T_{\ve}(1)T_{\ve}(t-1)$, for any $t>1$,
and taking \eqref{stimaapriori:ave} in Theorem \ref{thm:3.2} into
account, we get
\begin{equation*}
\|D^{\alpha}T_{\ve}(t)f\|_{\infty}\le \tilde
C\|T_{\ve}(t-1)f\|_{\infty}\le \tilde CC_0^{-1}C_0 \|f\|_{\infty}\le
\tilde CC_0^{-1}t^{-q_h(|\alpha|)}e^{\omega t}\|f\|_{\infty}.
\end{equation*}
Hence, \eqref{stimaapriori:ave-bis} follows with $\hat C=\tilde
CC_0^{-1}$.

\par
We can now prove that problem \eqref{HCP} admits a unique classical
solution for any $f\in C_b(\R^N)$, For this purpose, as in the proof
of Theorem \ref{thm:3.1}, we set $u_{\ve}=T_{\ve}(\cdot)f$. Then,
from \eqref{stimaapriori:ave-bis}, we easily deduce that for any
$0<T_1<T_2$
\begin{equation*}
\sup_{\ve\in ]0,1[}\sup_{t\in
[T_1,T_2]}\|u_{\ve}(t,\cdot)\|_{C^{\kappa}_b(\R^N)}<+\infty\quad\mbox{holds}.
\end{equation*}
Since the function $u_{\ve}$ solves the Cauchy problem
\eqref{pb-approx} and the coefficients of the operator ${\mathscr
A}_{\ve}$ are locally bounded, uniformly with respect to $\ve\in
]0,1[$, the function $D_tu_{\ve}$ is bounded in $]T_1,T_2[\times
B(R)$, for any $R>0$, by a constant, independent of $\ve$. Therefore
we have $u_{\ve}\in {\rm Lip}([T_1,T_2];C(B(R)))$ with norm
independent of $\ve\in ]0,1[$. By applying \cite[Propositions
1.1.2(iii) and 1.1.4(i)]{Lu1}, we now
deduce that $u_{\ve}\in C^{\theta/2,\kappa-1+\theta}(]T_1,T_2[\times
B(R))$ for any $\ve$ as above and some $\theta\in ]0,1[$, and with
$C^{\theta/2,\kappa-1+\theta}$-norm being bounded by a constant
independent of $\ve$. As a byproduct, using that $u_\ve$ solves
\eqref{pb-approx}, we deduce that $D_tu_{\ve}\in
C^{\theta/2,\kappa-3+\theta}(]T_1,T_2[ \times B(R))$ and, again, its
$C^{\theta/2,\kappa-3+\theta}$-norm is bounded by a constant
independent of $\ve$. Since $T_1$, $T_2$, $R$ are arbitrarily fixed,
using both a compactness and a diagonal argument, we can determine
an infinitesimal sequence $\{\ve_n\}$ such that $\{u_{\ve_n}\}$
converges in $C^{1,\kappa-1}(K)$, for any compact set $K\subset
]0,+\infty[\times\R^N$, to a function $u_f\in
C^{1+\theta/2,\kappa-1+\theta}_{\rm loc}(]0,+\infty[\times\R^N)$. Of
course, the function $u_f$ solves the differential equation in
\eqref{HCP} for $t>0$. The continuity of $u_f$ up to  $t=0$ and the
condition $u_f(0,\cdot)=f$, are obtained in three steps.
\par
{\em Step 1.} Suppose that $f\in C_c^2(\R^N)$. Then, by the proof of
\cite[Proposition 4.3]{MPW}, we know
that
\begin{equation*}
\|u_{\ve_n}(t)-f\|_{\infty}\le t\sup_{s\in
[0,+\infty[}\|T_{\ve_n}(s){\mathscr A}_{\ve_n}f\|_{\infty}\le
t\|{\mathscr A}_{\ve_n}f\|_{\infty}\le Ct,\qq\;\,t\in [0,+\infty[.
\end{equation*}
Hence, taking the limit, first as $n\to +\infty$ and then as $t\to
0^+$, we obtain that $u_f$ is continuous at $t=0$ where it equals
$f$. So we have shown that $u_f$ is the unique classical solution to
problem \eqref{HCP}. Moreover, we infer that $u_{\ve}$ converges to
$u_f$, as $\ve\to 0^+$, in $C^{1,\kappa-1}([T_1,T_2]\times B(R))$
for any $T_1,T_2,R$ as above. Indeed, by uniqueness (or by the
maximum principle in Proposition \ref{prop:maxprinc:0}), any
sequence $u_{\ve_n}$ (with $\ve_n$ being positive and infinitesimal)
which converges in $C^{1,{\kappa}}_{\rm loc}([0,+\infty[\times
\R^)$, must converge to $u_f$. Again the maximum principle implies
that for a non-negative $f\in C_c(\R^N)$ the solution $u_f$ is also
non-negative.
\par
{\em Step 2.} Suppose now that $f$ vanishes at $\infty$. Then, we can
approximate $f$ by a sequence of smooth and compactly supported
functions $f_n$. By estimate \eqref{stimaapriori:1} we know that
\begin{equation*}
\|T_{\ve_m}(t)f_n-T_{\ve_m}(t)f\|_{\infty}\le
\|f_n-f\|_{\infty},\qq\;\,t\in [0,+\infty[,\;\,n,m\in\N.
\end{equation*}
Letting $m\to +\infty$ yields
\begin{equation*}
\sup_{t\ge 0}\|u_{f_n}(t,\cdot)-u_f(t,\cdot)\|_{\infty}\le
\|f_n-f\|_{\infty},\qq\;\,n\in\N.
\end{equation*}
Hence $u_{f_n}$ converges to $u_f$ uniformly in
$[0,+\infty[\times\R^N$. Since $u_{f_n}$ is continuous in
$[0,+\infty[\times\R^N$ and $u_{f_n}(0,\cdot)=f_n$, it follows that
$u_f$ is continuous in $[0,+\infty[\times\R^N$ as well, and that
$u(0,\cdot)=f$ holds.

The same argument in the last part of Step 1, shows that, also in
this situation, the function $T_{\ve}(\cdot)f$ converges to $u_f$ in
$C^{1,\kappa-1}_{\rm loc}(]0,+\infty[\times\R^N)$ as $\ve\to 0^+$.
Moreover for non-negative $f$ we see the solution $u_f$ to be
non-negative as well.
\par
{\em Step 3.} We now consider the general case when $f\in C_b(\R)$.
We fix $R>0$ and a function $\psi\in C^{\infty}_c(\R^N)$ satisfying
$\chi_{B(R)}\le\psi\le\chi_{B(R+1)}$. Further, we split first
$f=\psi f+ (1-\psi)f$, and then we can write
$T_{\ve_n}(t)f=T_{\ve_n}(t)(\psi f)+ T_{\ve_n}(t)((1-\psi)f)$. We
remark that the semigroups $\{T_{\ve}(t)\}$ preserve positivity,
which is well-known but also follows immediately from the maximum
principle, Proposition \ref{prop:maxprinc:0}.  This implies
\begin{equation}
|\{T_{\ve_n}(t)((1-\psi)f)\}(x)|\le
\|f\|_{\infty}(T_{\ve_n}(t)(1-\psi))(x),\qq\;\,t>0\;\,x\in\R^N.
\label{tag}
\end{equation}
Recall that $T_{\ve_n}(\cdot)(1-\psi)=1-T_{\ve_n}(\cdot)\psi$ and
let $n\to +\infty$ in \eqref{tag} to conclude
\begin{equation*}
|u_f(t,x)-u_{\psi f}(t,x)|\le
\|f\|_{\infty}(1-u_{\psi}(t,x)),\qq\;\,t\in
]0,+\infty[,\;\,x\in\R^N.
\end{equation*}
Since, for any $x\in B(R)$, $(\psi f)(x)=f(x)$ and $u_{\psi}(t,x)$
tends to $1$ as $t\to 0^+$, we obtain that $u_f$ is continuous in
$[0,1]\times B(R)$ and $u_f(0,\cdot)=f$ in
$B(R)$. The arbitrariness of $R>0$ allows us to complete the proof.
Moreover, as in the previous cases, $T_{\ve}(\cdot)f$ converges to
$u_f$ in $C^{1,\kappa-1}_{\rm loc}(]0,+\infty[\times\R^N)$, as
$\ve\to 0^+$.

\med \pn (ii) and (iii). They follow from the maximum principle in
Proposition \ref{prop:maxprinc:0} and Steps 1 and 2 in the proof of
 (i).

\med\pn (iv). By (i), we know that the function $T(t)f$ belongs to
$C^{\kappa-1}(\R^N)$ for any $t>0$ and any $f\in C_b(\R^N)$, and $T_{\ve}(t)f$ converges
to $T(t)f$ in $C^{\kappa-1}_{\rm loc}(\R^N)$ as $\ve\to 0^+$. Since
the constant in \eqref{stimaapriori:ave-bis} is independent of
$\ve\in ]0,1]$, it is immediate to conclude that
\eqref{stimaapriori:a0} holds for any $\|\alpha\|\le\kappa-1$.
\end{proof}
With respect to derivatives in the first and second block of
variables we can prove more regularity.
\begin{theorem}\label{thm:reg}
Suppose that Hypotheses \ref{ipos-1} are satisfied and let
$\{T(t)\}$ be  the semigroup constructed in Theorem
\ref{thm:constr}. Then for any $f\in C_b(\R^N)$ and any multi-index
$\alpha\in\N_0^N$, with $\|\alpha\|=\kappa$ and  $\alpha_j\neq 0$
for some $j\le p_0+p_1$, the derivative $D^{\alpha}T(\cdot)f$ exists
in the classical sense in $]0,+\infty[\times \R^N$ and it is a
continuous function. Moreover, there exists a positive constant $C$,
depending only on  $\omega$, $h$ such that, for any $f\in
C^h_b(\R^N)$ and any $\alpha$ as above, we have
\begin{equation}
\|D^{\alpha}T(t)f\|_{C_b(\R^N)}\le Ct^{-q_h(|\alpha|)}e^{\omega
t}\|f\|_{C^h_b(\R^N)},\qq\;\,t\in ]0,+\infty[.
\label{stimaapriori:a1}
\end{equation}
\end{theorem}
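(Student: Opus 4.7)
The starting point is Theorem \ref{thm:3.2}, which already provides the uniform-in-$\ve\in]0,1]$ bound $\|D^{\alpha}T_{\ve}(t)f\|_{\infty}\le \tilde C t^{-q_h(|\alpha|)}\|f\|_{C^h_b(\R^N)}$ on compact subintervals of $]0,+\infty[$ for \emph{every} $\|\alpha\|\le\kappa$, in particular for the critical case $\|\alpha\|=\kappa$ not covered by Theorem \ref{thm:constr}(iv). Extension to all $t>0$ with the exponential factor $e^{\omega t}$ is obtained by the splitting $T_{\ve}(t)=T_{\ve}(1)T_{\ve}(t-1)$ for $t>1$, exactly as in Step 1 of the proof of Theorem \ref{thm:constr}(i). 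What remains is therefore only the classical existence and continuity of $D^\alpha T(t)f$, the estimate \eqref{stimaapriori:a1} then following by taking $\liminf_{\ve\to 0^+}$ in the uniform bound. The obstacle is that the convergence $T_\ve(\cdot)f\to T(\cdot)f$ produced in Theorem \ref{thm:constr}(i) takes place only in $C^{1,\kappa-1}_{\rm loc}$, so that top-order spatial derivatives do not a priori survive classically; the hypothesis $\alpha_{j_0}\neq 0$ for some $j_0\le p_0+p_1$ is precisely what singles out a direction along which the extra compactness can be extracted.

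For $j_0\le p_0$, I would view the equation $D_tu_\ve=\mathscr{A}_\ve u_\ve$ as a uniformly parabolic equation in the variables $(t,x_1,\ldots,x_{p_0})$ with $(x_{p_0+1},\dots,x_N)$ regarded as parameters. The elliptic part $\sum_{i,j\le p_0}q_{ij}(x)D_{ij}$ has $C^\kappa$ coefficients bounded on compact sets uniformly in $\ve$, and the perturbation $\ve\Delta_\star$ does not touch those variables. The classical interior parabolic Schauder estimates of \cite[Ch.~4, Thm.~5.1]{LSU}, iterated up to order $\kappa$ and using the $\ve$-uniform bounds of Theorem \ref{thm:3.2} to control the forcing terms arising from derivatives in the transverse directions, yield a uniform-in-$\ve$ H\"older bound on derivatives of $u_\ve$ of order $\kappa$ whenever at least one differentiation falls in the first $p_0$ variables. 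A standard Arzel\`a--Ascoli argument then upgrades the $C^{\kappa-1}_{\rm loc}$ convergence of $u_\ve$ to locally uniform convergence of $D^\alpha u_\ve$, producing simultaneously the classical existence of $D^\alpha T(t)f$, its continuity in $]0,+\infty[\times\R^N$, and the announced estimate in this case.

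For $p_0<j_0\le p_0+p_1$, I would reduce to the previous case through the block structure of $B$ provided by Hypothesis \ref{ipos-1}(ii). Since $B_1:\R^{p_0}\to\R^{p_1}$ is surjective, one can find constants $c_1,\dots,c_{p_0}$ such that $\sum_{k\le p_0}c_k b_{ik}=\delta_{i,j_0}$ for all $i\in\II_1$. Combining this with the commutator identity $[D_k,\langle Bx,D\rangle]=\sum_i b_{ik}D_i$ and with $D_tu_\ve=\mathscr{A}_\ve u_\ve$, one expresses $D_{j_0}u_\ve$ as a linear combination of $D_tD_ku_\ve$, $\mathscr{A}_\ve D_ku_\ve$ with $k\le p_0$, first-block derivatives of $u_\ve$, and terms involving $Dq_{ij}$ and $DF_j$ controlled by \eqref{cond:perlimit-1} and Hypothesis \ref{ipos-1}(iii). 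The main technical hurdle, which I expect to be the most delicate part, is the order book-keeping in the subsequent application of $D^{\alpha-e_{j_0}^{(N)}}$ to this identity: a naive Leibniz expansion would produce derivatives of order $\kappa+1$ beyond the reach of Theorem \ref{thm:3.2}, so the distribution must be arranged so that every surviving top-order term is either a time derivative (controlled through the PDE applied to an already controlled $\kappa$-th spatial derivative) or a $\kappa$-th first-block derivative handled by the previous case. Once this is achieved, the conclusion proceeds as in the first case via Arzel\`a--Ascoli and passage to the limit.
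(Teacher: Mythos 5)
Your reduction of the problem is right: Theorem \ref{thm:3.2} does cover $\|\alpha\|=\kappa$ for the approximating semigroups, the extension to all $t>0$ via $T_\ve(t)=T_\ve(1)T_\ve(t-1)$ is standard, and the real issue is that $T_\ve(\cdot)f\to T(\cdot)f$ only in $C^{1,\kappa-1}_{\rm loc}$, so classical existence and continuity of the order-$\kappa$ derivative must be produced by a separate mechanism. But neither of your two mechanisms closes the gap. In the case $j_0\le p_0$, treating the equation as uniformly parabolic in $(t,x_1,\dots,x_{p_0})$ and invoking interior Schauder estimates up to order $\kappa$ requires the forcing (which contains $\sum_{i>p_0}(Bx)_iD_iu_\ve$ and $\ve\Delta_\star u_\ve$) to be bounded in $C^{\kappa-2+\theta}$ of the first-block variables \emph{uniformly in $\ve$}; the H\"older seminorm of $\ve D^\beta D_{ii}u_\ve$ with $\|\beta\|=\kappa-2$, $i>p_0$, calls for derivatives of total order $\kappa+1$, which are outside the reach of Theorem \ref{thm:3.2}, and the constants of Theorem \ref{thm:3.1} at that order are not uniform in $\ve$ (the proof there needs $\konst\lesssim\ve^2$). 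Moreover, even granting a uniform H\"older bound in the first-block variables only, Arzel\`a--Ascoli needs equicontinuity in \emph{all} of $(t,x)$ to upgrade the convergence of $D^\alpha u_\ve$, and you do not supply a modulus of continuity in $t$ or in the transverse variables.

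The case $p_0<j_0\le p_0+p_1$ is where the gap is fatal. The commutator identity $[D_k,\langle B\cdot,D\rangle]=\sum_i b_{ik}D_i$ together with the surjectivity of $B_1$ does express $D_{j_0}u$ through $D_kD_tu$, $D_k\Tr(QD^2_\star u)$, etc., but applying $D^{\alpha-e_{j_0}^{(N)}}$ then produces terms such as $D^{\alpha-e_{j_0}^{(N)}}D_kD_{lm}u$ with $l,m\le p_0$, of total order $\kappa+2$; no redistribution of derivatives can bring the total order back below $\kappa+1$, so the "book-keeping" you defer is not a technicality but an impossibility for a purely algebraic argument. The paper's proof avoids this entirely by an analytic device you never invoke: a variation-of-constants representation $D_ju(t,\cdot)=\int_0^tT(t-s)g_j(s,\cdot)\,ds$ for a localized first derivative (obtained by a finite-difference argument on $\vartheta\,\tau_\delta^j(\eta u_\ve)$ and a double passage to the limit in $\ve$ and $\delta$), where $g_j$ contains derivatives of $u$ only up to order two. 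One then differentiates \emph{under the integral}, letting the smoothing estimate \eqref{stimaapriori:a0} for $T(t-s)$ absorb the excess $\kappa-1$ derivatives at the price of an integrable singularity $(t-s)^{-\frac12-(1-\theta)\frac{2\iota+1}{2}}$; for the second block this further requires a genuine bootstrap (the iteration $\theta_{k+1}=\gamma(1+\theta_k)/(3-2\theta_k)$) to push the H\"older regularity of $D_{ll'}u$ past the integrability threshold, followed by approximation steps to pass from $f\in C_b^\kappa$ to general $f\in C_b^h$. These are the essential ideas of the proof, and they are absent from your proposal.
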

\begin{proof}
We set $u=T(\cdot)f$ and split the proof into several steps. In the first one we show a formula that will be used in
Steps 2 to 5, in the actual proof of \eqref{stimaapriori:a1}. Until Step 5, we will assume at least  $f\in C_b^{\kappa-1}(\R^N)$, and then in Step 5 we proceed with an approximation argument.
Finally, in Step 6, we show that the function $D^{\alpha}u$ is
continuous in $]0,+\infty[\times\R^N$.
\par
{\em Step 1.} We fix $R>0$, $j\in\{1,\ldots,N\}$ and $f\in
C_b^{\kappa-1}(\R^N)$, and prove that, for any $\eta=\eta_R\in
C^{\infty}_c(\R^N)$ such that $\chi_{B(R)}\le\eta_R\le\chi_{B(2R)}$,
and any $\vartheta=\vartheta_R\in C^{\infty}_c(]0,+\infty[)$  such
that $\chi_{]R^{-1},R[}\le \vartheta_R \le\chi_{](2R)^{-1},2R[}$, it
holds that
\begin{equation}
D_ju(t,x)=
\int_0^t(T(t-s)g_j(s,\cdot))(x)ds,\qq\;\,t\in ]R^{-1},R[,~x\in B(R),
\label{variat-of-constant:2}
\end{equation}
\pn where
\begin{align}\label{eq:gj}
g_j&=\vartheta{\Tr}((D_jQ_0)D^2_{\star}(\eta
u))+\vartheta(B^*{D}_{\star}(\eta u))_j+\vartheta\langle
D_jF,D_\star(\eta u)\rangle\no\\
&\quad -\vartheta D_j(u{\mathscr A}\eta)
-2\vartheta \langle Q_0{D}_{\star} u,{D}_{\star} D_j\eta\rangle\no\\
&\quad -2\vartheta \langle Q_0{D}_{\star}
D_ju,{D}_{\star}\eta\rangle -2\vartheta \langle (D_jQ_0){D}_{\star}
u,{D}_{\star}\eta\rangle +\vartheta' D_j(\eta u),
\end{align}
${D}_{\star}u$ and $D^2_{\star}u$ denoting, respectively, the vector
of first-order derivatives of $u$ with respect to indices not
greater than $p_0$ and the quadratic submatrix obtained erasing the
last $N-p_0$ rows and columns from $D^2u$.

To prove \eqref{variat-of-constant:2}, for any $\delta\in\, ]-1,1[$, we
introduce the operator $\tau^j_\delta$ defined on $C_b(\R^N)$ by
\begin{equation*}
(\tau_\delta ^j\psi)(x)=\frac{\psi(x+\delta e_j)-\psi(x)}{\delta },\qq\;\,x\in\R^N,\;\,
\psi\in C_b(\R^N).
\end{equation*}
Moreover, we set $w_{\ve,\delta }^j=\vartheta\tau_\delta ^jv_{\ve}$ where
$v_{\ve}=\eta u_{\ve}$. In the sequel, in order to shorten the
notation, if there is no danger of confusion we only stress
explicitly  the dependence on $\ve$ of the functions considered. As
it is easily seen,
\begin{equation*}
\left\{
\begin{array}{lll}
\ds D_tw_{\ve}(t,x)={\mathscr A}_{\ve}w_{\ve}(t,x)+g_{\ve}(t,x),\q & t\in\, ]0,+\infty[, &x\in\R^N,\\[3mm]
w_{\ve}(0,x)=0, &&x\in\R^N,
\end{array}
\right.
\end{equation*}
\pn holds with
\begin{align}
g_{j,\delta,\ve}&=g_{\ve}=
\vartheta{\Tr}(\tau(Q_0)D^2_{\star}v_{\ve})+\vartheta(B^*{D}
v_{\ve})_j +\vartheta\langle \tau(F),D_\star
v_\ve\rangle\no\\
&\quad-\vartheta\tau(u_{\ve}{\mathscr A}_{\ve}\eta)
-2\vartheta\langle Q_0{D}_{\star}
u_{\ve}(\cdot,\cdot+\delta e_j),{D}_{\star} \tau(\eta)\rangle
\no\\[1mm]
&\quad -2\ve\vartheta\langle
{D}_{\star\star}\tau(u_{\ve}),{D}_{\star\star}\eta\rangle
-2\vartheta\langle\tau(Q_0){D}_{\star} u_{\ve}(t,\cdot+\delta e_j),{D}_{\star}\eta(\cdot+\delta e_j)\rangle\no\\[1mm]
&\quad-2\ve\vartheta\langle
{D}_{\star\star}u_{\ve}(\cdot,\cdot+\delta e_j),{D}_{\star\star}\tau(\eta)\rangle
-2\vartheta\langle Q_0{D}_{\star} \tau(u_{\ve}),{D}_{\star}
\eta\rangle+\vartheta'\tau(v_{\ve}), \label{gvehkr}
\end{align}
and ${D}_{\star\star}\psi$ denotes the vector of the first-order
derivatives of the function $\psi:\R^N\to\R$ with respect to the
last $N-p_0$ variables. In view of the variation of constants formula (see \cite[Theorem
3.5]{P1}), we obtain that $w_{\ve}$ satisfies
\begin{equation}
w_{\ve}(t,x)=
\int_0^t(T_{\ve}(t-s)g_{\ve}(s,\cdot))(x)ds,\qq\;\,t\in
[0,+\infty[,~x\in\R^N. \label{variat-of-constant}
\end{equation}
\pn We are going to show that we can take the limit as $\ve\to 0^+$
in \eqref{variat-of-constant} and write
\begin{equation}
v_\delta (t,x):=\eta(x)(\tau_\delta u(t,\cdot))(x) =
\int_0^t(T(t-s)g_{j,\delta}(s,\cdot))(x)ds,\quad\;\,t\in
[0,+\infty[,~x\in\R^N, \label{variat-of-constant:1}
\end{equation}
\pn where $g_{j,\delta}$ is obtained from $g_{j,\delta,\ve}$ by replacing
$u_{\ve}$ by $u$ and letting $\ve=0$ in \eqref{gvehkr}. By the
results in the proof of Theorem \ref{thm:constr}(i), it follows
immediately that the continuous function $g_{j,\delta,\ve}$ converges
 to the function $g_{j,\delta}$ uniformly in $[0,+\infty[\times\R^N$, as
$\ve\to 0^+$. This implies that, for any $r,s>0$,
$T_{\ve}(r)g_{j,\delta,\ve}(s,\cdot)$ converges to $T(r)g_{j,\delta}(s,\cdot)$
locally uniformly in $\R^N$, as $\ve\to 0^+$. Indeed, for any
compact set $K\subset\R^N$, we have
\begin{align*}
&\|T_{\ve}(r)g_{j,\delta,\ve}(s,\cdot)-T(r)g_{j,\delta}(s,\cdot)\|_{C(K)}\\[1mm]
&\quad\le
\|T_{\ve}(r)(g_{j,\delta,\ve}(s,\cdot)-g_{j,\delta}(s,\cdot))\|_{C(K)}
+\|T_{\ve}(r)g_{j,\delta}(s,\cdot)-T(r)g_{j,\delta}(s,\cdot)\|_{C(K)}\\[1mm]
&\quad\le\|g_{j,\delta,\ve}(s,\cdot)-g_{j,\delta}(s,\cdot)\|_{\infty}
+\|T_{\ve}(r)g_{j,\delta}(s,\cdot)-T(r)g_{j,\delta}(s,\cdot)\|_{C(K)}.
\end{align*}
From the proof of Theorem \ref{thm:constr}(i) we see that the last
term in  the previous chain of inequalities vanishes as $\ve\to
0^+$. Moreover, since the semigroups $\{T_{\ve}(t)\}$ are
contractive, the function $(r,s)\mapsto
T_{\ve}(r)g_{j,\delta,\ve}(s,\cdot)$ is bounded in $[0,+\infty[\times
[0,+\infty[\times\R^N$, uniformly with respect to $\ve\in ]0,1[$.
Therefore, the dominated convergence theorem yields
\eqref{variat-of-constant:1}.

\par We can now prove formula \eqref{variat-of-constant:2}. Since, by
Theorem \ref{thm:constr}(iv), the function $u$ is bounded in
$[0,\timeT]$ with values in $C^{\kappa-1}_b(\R^N)$ for any
$\timeT>0$ (use $f\in C_b^{\kappa-1}(\R^N)$), it is immediate to see that $g_\delta (s,\cdot)$ converges to
$g(s,\cdot)$ uniformly in $\R^N$ for any $s>0$. Formula
\eqref{variat-of-constant:2} now follows from
\eqref{variat-of-constant:1} via the dominated convergence theorem.
\par
{\em Step 2.} Here, and in the forthcoming Steps 3 and 4, we assume
that $f\in C^{\kappa}_b(\R^N)$. Let us fix a multi-index
$\alpha=(\alpha_1,\ldots,\alpha_N)\in\N_0^N$ with
$\|\alpha\|=\kappa$ and $\|(\alpha_1,\ldots,\alpha_{p_0})\|\ge 1$.
We denote by $j$ the largest integer such that $\alpha_j\neq 0$ and
set $\beta:=\alpha-e_j^{(N)}$. Set
$\beta':=|\beta|-e_1^{(r+1)}=(\beta_0-1,\beta_1,\dots,\beta_r)$, and
denote by $\iota$  the smallest integer with $\beta'_\iota>0$.
To prove that the derivative $D^{\alpha}u$ exists in the classical
sense it suffices to show that we can differentiate, with respect to
the multi-index $\beta$, the function in
\eqref{variat-of-constant:2}. For this purpose, we observe that,
from \eqref{stimaapriori:a0}  with $h=\kappa-3$ and $h=\kappa-2$, we
deduce that
\begin{equation*}
\|D^{\beta}T(t)\psi\|_{\infty}\le
Ct^{-\frac{1}{2}-(1-\theta)\frac{2\iota+1}{2}}e^{\omega t}
\|\psi\|_{C^{\kappa-3+\theta}_b(\R^N)}
\end{equation*}
holds for any $t\in ]0,+\infty[$, $\psi\in C^{\kappa-3+\theta}_b(\R^N)$
and  $\theta=0,1$. By interpolation, we can extend the previous
estimate to any $\theta\in [0,1]$. Estimate
\eqref{stimaapriori:ave-bis} implies that, for any multi-index
$\gamma$ with length $\kappa-1$ and any $t>0$, the function
$D^{\gamma}u_{\ve}(t,\cdot)$ is Lipschitz continuous in $\R^N$ with
Lipschitz semi-norm that can be bounded  by $Ce^{\omega t}$ for any
$\omega>0$ and some $C=C(\omega)$, where the constants are uniform
in $\ve$. Since $u_{\ve}$ converges to $u$ in $C^{1,\kappa-1}_{\rm
loc}(]0,+\infty[\times\R^N)$, the function
 $D^{\gamma}u(t,\cdot)$ is Lipschitz continuous in $\R^N$ and its norm can be bounded by
$Ce^{\omega t}$. As a byproduct, we infer that, for any $\theta\in
]0,1[$ and any $\timeT>0$, the function
$\|g_j(s,\cdot)\|_{C^{\kappa-3+\theta}_b(\R^N)}$ is bounded in
$]0,\timeT[$. Moreover,  for $0<s<t\le \timeT$ we have
\begin{equation}\label{eq:becs}
\|D^{\beta}T(t-s)g_j(s,\cdot)\|_{\infty} \le
C(t-s)^{-\frac{1}{2}-(1-\theta)\frac{2\iota+1}{2}}e^{\omega
\timeT}\sup_{s\in
]0,\timeT[}\|g_j(s,\cdot)\|_{C^{\kappa-3+\theta}_b(\R^N)}.
\end{equation}
Consequently, if we take $\theta>2\iota/(2\iota+1)$,  we get an
integrable function on the right hand side of \eqref{eq:becs}, and
hence we can differentiate under the integral sign in
\eqref{variat-of-constant:2}. This proves that the derivative
$D^{\alpha}u$ exists in the classical sense. Moreover, it satisfies
\eqref{stimaapriori:a0}. Indeed, the sup-norm of $D^{\alpha}u$ can
be controlled from above by the Lipschitz seminorm of $D^{\beta}u$
which, as we have shown, can be estimated from above by $Ce^{\omega
t}$ for any $t>0$, any $\omega>0$ and some $C=C(\omega)$.
\par
{\em Step 3.} We now assume that $\|\alpha\|=\kappa$ and
$\alpha_i=0$ for all $i=1,\ldots,p_0$, whereas $\alpha_j\neq 0$ for
some $j\in\{p_0+1,\ldots,p_0+p_1\}$; we again set
$\beta:=\alpha-e^{(N)}_j$. We are going to show that we can
differentiate the formula \eqref{variat-of-constant:2} with respect
to the multi-index $\beta$. For this purpose, let again $\iota$ be
the largest integer with $\beta_\iota\neq 0$, and note that it
suffices to prove that, for any $\timeT>0$, the function $g_j$ is
bounded in $]0,\timeT[$ with values in
$C^{\kappa-2+\theta}_b(\R^N)$, for some $\theta\in
]\frac{2\iota-1}{2\iota+1},1[$. Indeed, once this property is
proved, estimate \eqref{stimaapriori:a0} gives
\begin{align*}
\|D^{\beta}T(t-s)g_j(s,\cdot)\|_{\infty} \le
C(t-s)^{-(1-\theta)\frac{2\iota+1}{2}}e^{\omega \timeT}\sup_{s\in
]0,\timeT[}\|g_j(s,\cdot)\|_{C^{\kappa-3+\theta}_b(\R^N)},
\end{align*}
for all $0<s<t\le \timeT$, for arbitrary $\timeT>0$ and some
$C=C(\omega)$, and we can complete the proof applying the same
arguments as in the previous step.

Due to the structure of $g_j$, in order to prove that $g_j$ is
bounded in $]0,\timeT[$ with values in
$C^{\kappa-2+\theta}_b(\R^N)$, it suffices to show that for any pair
of indexes $l\le p_0$ and $l'\le p_0+p_1$, with $l\le l'$, the
function $D_{ll'}u(t,\cdot)$ belongs to $C^{\kappa-2+\theta}(\R^N)$
and $\sup_{t\in
]1/M,M[}\|D_{hl}u(t,\cdot)\|_{C^{\kappa-2+\theta}(B(M))}<+\infty$
for any $M>0$. Actually, only the  first and the fifth, second-order
terms have to be taken care of in \eqref{eq:gj}. Indeed, applying
$D^\gamma$ with $\|\gamma\|=\kappa-2$ to any of the other terms
$\widetilde g$ from \eqref{eq:gj} (in which there are only first
order derivatives of $u$), we get that $D^\gamma \widetilde
g(t,\cdot)$ is Lipschitz continuous, uniformly in $]0,t_0[$ and $\sup_{t\in
]0,t_0[}\|\widetilde g
(t,\cdot)\|_{C^{\kappa-2+\theta}(\R^N)}<+\infty$ (these follow by
approximating $u$ by $u_\ve$ as  we have done several times above).

We now prove the assertion about $D_{ll'}u$. So let now $\beta'\in \N_0^N$ with $\|\beta'\|=\kappa-2$. Denote
furthermore by $i$ the largest integer such that $\beta'_i>0$, and
define $\beta=\beta'+e_l^{(N)}+e_{l'}^{(N)}-e_{i}^{(N)}$. From
\eqref{stimaapriori:a0} and from the already proved assertion in
Step 2 we obtain,  by  using interpolation as well, that
\begin{equation}
\|D^{\beta}T(t)\psi\|_{C^{\rho}_b(\R^N)}\le
Ct^{-\frac{\rho\theta}{2}- (1-\theta)\frac{3\rho+1}{2}}e^{\omega t}
\|\psi\|_{C^{\kappa-2+\theta}_b(\R^N)},\qq\;\,t\in ]0,+\infty[,
\label{1-b}
\end{equation}
for any $\theta,\rho\in [0,1]$. From \eqref{1-b} it now follows that
\begin{equation}
\|D^{\beta}T(t-s)g_i(s,\cdot)\|_{C^{\rho}_b(\R^N)} \le
C(t-s)^{-\frac{3\rho+1}{2}}\|g_i(s,\cdot)\|_{C^{\kappa-2}_b(\R^N)}
\le C(t-s)^{-\frac{3\rho+1}{2}} \label{1-c}
\end{equation}
holds for any $0<s<t\le \timeT$. Hence, if we fix $\gamma\in ]0,1[$
and take $\rho=\theta_1=\frac{\gamma}{3}$, we see that the function
in the right-hand side of \eqref{1-c} is integrable in $]0,\timeT[$
for any $\timeT>0$. Thus we can differentiate under the integral in
\eqref{variat-of-constant:2}, and conclude  that the function
$D_{ll'}u$ is bounded in $]R^{-1},R[$ with values in
$C^{\kappa-2+\theta_1}(B(R))$. Due to the arbitrariness of $R$, it
follows that $D_{ll'}u$ is bounded in $H$ with values in
$C^{\kappa-2+\theta_1}(K)$ for any compact set $H\times K\subset
]0,+\infty[\times \R^N$.

As a second step, using \eqref{1-b}, we deduce that $D_{ll'}u$ is
bounded in $H$ with values in $C^{\kappa_2+\theta_2}(K)$ for any $H$
and $K$ as above, where
$\theta_2=\gamma\frac{1+\theta_1}{3-2\theta_1}$. Iterating, this
argument, we see that $D_{ll'}u$ is bounded in $H$ with values in
$C^{\kappa-2+\theta_k}(K)$, where the sequence $\{\theta_k\}$ is
defined by recurrence as follows:
\begin{equation*}
\left\{
\begin{array}{ll}
\ds\theta_{k+1}=\gamma\frac{1+\theta_k}{3-2\theta_k}, & k\le k_0,\\[3mm]
\theta_0=0,
\end{array}
\right.
\end{equation*}
where either $k_0=+\infty$ or $k_0$ is the largest integer such that
$\theta_k<3/2$.

It easy to see that  $\theta_k<\theta_{k+1}$ holds for any $k\le
k_0$ and choice $\gamma\in ]0,1[$. For the choice
$\gamma=\frac{3}{4}$, the equation
$\ell=\gamma\frac{1+\ell}{3-2\ell}$ has no real solutions. This fact combined with the monotonicity property implies that there exists $k_1$ such that
$\theta_{k_1}>1$. It follows that $D_{ll'}u$ is bounded in $H$ with
values in $C^{\kappa-2+\theta_k}(K)$ for any $\theta\in ]0,1[$ and,
consequently, $g_i$ is locally bounded in $]0,+\infty[$ with values
in $C^{\kappa-2+\theta}_b(\R^N)$ for any $\theta\in ]0,1[$. The
proof of Step 3 is complete.
\par
{\em Step 4.} We now show that $\|D^{\alpha}T(t)f\|_{C_b(\R^N)}\le
Ct^{-q_h(|\alpha|)}e^{\omega t}\|f\|_{C^h_b(\R^N)}$ for any $t\in
]0,+\infty[$, any $f\in C^{\kappa}_b(\R^N)$, any $h\in\N$ with
$h<\kappa$, and any $\alpha\in\N_0^N$ with length $\kappa$ and such
that $\alpha_j\neq 0$ for some $j\le p_0+p_1$. For this purpose, fix $j\le p_0+p_1$
such that $\alpha_j\neq 0$. Further, we let
$\beta=\alpha-e_j^{(N)}$. From \eqref{stimaapriori:a0} it follows
that for any $x\in\R^{N-1}$ the Lipschitz seminorm of the function
$\psi:=(D^{\beta}T(t)f)(x_1,\ldots,x_{j-1},\cdot,x_{j+1},\ldots
x_n)$ does not exceed $Ce^{\omega
t}t^{-q_h(\alpha)/2}\|f\|_{C_b^h(\R^N)}$, with $C$ depending only on
$\omega$. Since the Lipschitz seminorm of the function $\psi$ equals
the sup-norm of the function
$(D^{\alpha}T(t)f)(x_1,\ldots,x_{j-1},\cdot,x_{j+1},\ldots x_n)$
(which is already known to be existing by Steps 2 and 3), the
desired estimate follows.
\par
{\em Step 5.} We now prove \eqref{stimaapriori:a1} for a general
$f\in C_b^h(\R^N)$ ($h<\kappa$) and any multi-index
$\alpha\in\N_0^N$ such that $\|\alpha\|=\kappa$ and $\alpha_j\neq 0$
for some $j\le p_0+p_1$. Let us notice that we can limit ourselves
to proving that the derivative $D^{\alpha}T(t)f$ exists in the
classical sense for any $t>0$ and any $f\in C_b(\R^N)$. Indeed, once
this property is checked, estimate \eqref{stimaapriori:a1} can be
proved arguing as in Step 4.

We begin by considering the case when $f\in BUC(\R^N)$, and we fix a
sequence $\{f_n\}\in C^{\kappa}_b(\R^N)$ converging to $f$ uniformly
in $\R^N$.  We can write
\begin{align*}
\|D^{\alpha}T(t)f_n-D^{\alpha}T(t)f_m\|_\infty&\le Ce^{\omega
t}t^{-q_0(|\alpha|)} \le Ce^{\omega t}t^{-q_0(|\alpha|)} \|f_n-f_m\|_{\infty},
\end{align*}
for any $t>0$ and any $n,m\in\N$. If follows that
$\{D^{\alpha}T(t)f_n\}$ is a Cauchy sequence in $C_b(\R^N)$ and,
consequently, $D^{\alpha}T(t)f\in C_b(\R^N)$.

\noindent We now assume that $f\in C_b(\R^N)$. By splitting
$T(t)f=T(t/2)T(t/2)f$ and noting that $T(t/2)f\in C^1_b(\R^N)\subset
BUC(\R^N)$, from the preceding  we deduce that $D^{\alpha}T(t)f=D^{\alpha}T(t/2)(T(t/2)f)$
exists in the classical sense.

{\em Step 6.} To complete the proof, we have to show that for any
multi-index $\alpha$ with length $\kappa$ such that $\alpha_j\neq 0$
for some $j\le p_0+p_1$, the function $D^{\alpha}T(t)f$ is
continuous in $]0,+\infty[\times\R^N$. For this purpose, let $i$ be
the largest integer such that $\alpha_i>0$. Let us fix
$y\in\R^{N-1}$, and introduce the function
$\psi=D^{\beta}u(\cdot ,y_1,\ldots,y_{i-1},\cdot,y_{i+1},\ldots,y_N)$
where, again, $\beta=\alpha-e_i^{(N)}$, and still $\beta_{j'}>0$ for some $j'\le p_0+p_1$. From the results in Steps 2
to 5 we know that $\psi$ is bounded in $]a,b[$ with values in
$C^{1+\theta}(B(R))$  for
some $\theta\in ]0,1[$ and any $a,b,R>0$, with $a<b$. Applying
\cite[Propositions 1.1.2(iii) and 1.1.4(iii)]{Lu1} to the function
$\psi(t,\cdot)-\psi(s,\cdot)$ ($s,t,\in [a,b]$), we immediately see
that
\begin{align*}
\|\psi(t,\cdot)-\psi(s,\cdot)\|_{C^1(B(R))} \le
&C\|\psi(t,\cdot)-\psi(s,\cdot)\|_{C(B(R))}^{\frac{\theta}{1+\theta}}
\|\psi(t,\cdot)-\psi(s,\cdot)\|_{C^{1+\theta}(B(R))}^{\frac{1}{1+\theta}}\\
\le &
C\|\psi(t,\cdot)-\psi(s,\cdot)\|_{C(B(R))}^{\frac{\theta}{1+\theta}},
\end{align*}
for some constant $C$, independent of $y$. Since $u\in
C^{1,\kappa-1}(]0,+\infty[\times\R^N)$, we immediately deduce that
the right-hand side of the previous chain of inequalities vanishes
as $|t-s|\to 0^+$, implying that the function $D^{\alpha}u(\cdot,x)$
is continuous in $[a,b]$ uniformly with respect to $x\in\R^N$. This
is enough to conclude that $D^{\alpha}u$ is continuous in
$]0,+\infty[\times\R^N$.
\end{proof}

\begin{rem}
{\rm(i)} We remark that the results proved in Theorem
\ref{thm:constr} are stronger than those in
\cite{rot}.

\smallskip\pn{\rm(ii)}
Some calculation yields that the bootstrap argument used in Step 3
of the proof of Theorem \ref{thm:reg} cannot be applied to prove the
existence of the derivative $D^{\alpha}T(t)f$ in the classical sense
when $\|\alpha\|=\kappa$ and $\alpha_j=0$ for all
$j=1,\ldots,p_0+p_1$.
\end{rem}


\subsection{Properties of the semigroup}
In this section we first state some continuity property of the
semigroup $\{T(t)\}$ that will play a fundamental role in order to
prove the Schauder estimates of Section \ref{sec-optimal}. Then, we
characterize the domain of the weak generator of the semigroup.
Since the proofs of the following proposition can be obtained
arguing as in \cite{Lo}, we omit it.

\begin{prop}\label{prop-conv-compatti}
The following assertions hold.
\begin{enumerate}[\rm (i)]
\item
Let $\{f_n\}\subset C_b(\R^N)$ be a bounded sequence of continuous
functions converging to $f\in C_b(\R^N)$, pointwise in $\R^N$. Then,
$T(\cdot)f_n$ converges to $T(\cdot)f$ pointwise in
$[0,+\infty[\times\R^N$.
\item
If  $f_n$ converges to $f$, locally uniformly in $\R^N$ and
$\|f_n\|_\infty$ is bounded, then $T(\cdot)f_n$ converges to
$T(\cdot)f$ locally uniformly in $[0,+\infty[\times\R^N$ and in
$C^{1,2}(F)$ for any compact set $F\subset ]0,+\infty[\times\R^N$.
\item
There exists a family of probability Borel measures $\{p(t,x,dy):
t>0, x\in\R^N\}$ such that, for any $f\in C_b(\R^N)$,
\begin{equation*}
(T(t)f)(x)=\int_{\R^N}f(y)p(t,x,dy),\qq\;\,t>0,\;\,x\in\R^N.
\end{equation*}
Consequently, $\{T(t)\}$ can be extended to the space $B_b(\R^N)$ of
all bounded and Borel measurable functions $f:\R^N\to\R$ with a
semigroup of positive contractions.
\item
$\{T(t)\}$ is strong Feller, i.e., $T(t)f\in C_b(\R^N)$ $($actually
$T(t)\in C^{\kappa-1}_b(\R^N))$ for any $f\in B_b(\R^N)$.
\end{enumerate}\end{prop}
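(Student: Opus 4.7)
The plan is to prove the four assertions in the order (ii), (iii), (i), (iv), with each building on the preceding ones.

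First, for (ii), on any compact set $F \subset \,]0,+\infty[\times\R^N$ the uniform a priori estimate \eqref{stimaapriori:a0} from Theorem \ref{thm:constr}(iv) bounds $\|T(\cdot)f_n\|_{C^{1,\kappa-1}(F)}$ in terms of $\sup_n\|f_n\|_\infty$ (the time-derivative bound is recovered from the equation and the local boundedness of the coefficients). A standard compactness argument combined with a diagonal extraction and the uniqueness of the Cauchy problem (Proposition \ref{prop:maxprinc:0}) then forces the whole sequence $T(\cdot)f_n$ to converge to $T(\cdot)f$ in $C^{1,2}(F)$. To extend this convergence up to $t=0$ uniformly on a compact $K\subset\R^N$, fix $R$ with $K\subset B(R)$ and a cutoff $\psi\in C_c^{\infty}(\R^N)$ with $\chi_{B(R)}\le\psi\le\chi_{B(R+1)}$, and decompose $f_n-f = \psi(f_n-f) + (1-\psi)(f_n-f)$. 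The first piece is controlled by $\|\psi(f_n-f)\|_\infty\to 0$ (from the local uniform convergence) together with the contractivity of $T(t)$. For the second, positivity preservation and $T(t)\mathbf{1}\equiv \mathbf{1}$ give
\begin{equation*}
|T(t)((1-\psi)(f_n-f))(x)|\le 2\sup_n\|f_n\|_\infty\bigl(1-T(t)\psi\bigr)(x),
\end{equation*}
and Theorem \ref{thm:constr}(iii) applied to $1-\psi\in C_c(\R^N)$ shows that $T(t)\psi(x)\to 1$ uniformly for $x\in K$ as $t\to 0^+$, which yields the required uniform smallness.

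Next, for (iii), fix $t>0$ and $x\in\R^N$. The functional $f\mapsto (T(t)f)(x)$ is positive, linear on $C_0(\R^N)$, and of norm at most one, so by the Riesz-Markov-Kakutani theorem there is a unique regular Borel measure $p(t,x,\cdot)$ representing it. Picking $\{\eta_n\}\subset C_c^{\infty}(\R^N)$ with $0\le\eta_n\le 1$ and $\eta_n\to 1$ locally uniformly, (ii) gives $T(t)\eta_n(x)\to T(t)\mathbf{1}(x)=1$ (the constant $\mathbf{1}$ solves \eqref{HCP} with initial datum $\mathbf{1}$), while monotone convergence gives $T(t)\eta_n(x)\to p(t,x,\R^N)$, so $p(t,x,\cdot)$ is a probability measure. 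The representation extends from $C_0(\R^N)$ to $C_b(\R^N)$ by approximating $f$ with $\eta_n f\in C_0(\R^N)$, applying (ii) on the left and dominated convergence on the right. Setting $\widetilde T(t)f(x):=\int_{\R^N} f\,dp(t,x,\cdot)$ for $f\in B_b(\R^N)$ then defines the desired positive, contractive extension; the Chapman-Kolmogorov identity for the extension is verified first on $C_c(\R^N)$ using (ii) and then extended.

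For (i), dominated convergence against the finite measure $p(t,x,\cdot)$ obtained in (iii) immediately gives $(T(t)f_n)(x)=\int f_n\,dp(t,x,\cdot)\to\int f\,dp(t,x,\cdot)=(T(t)f)(x)$ whenever $\{f_n\}$ is bounded and converges pointwise to $f$. For (iv), approximate a given $f\in B_b(\R^N)$ by $\{f_n\}\subset C_b(\R^N)$ with $\|f_n\|_\infty\le\|f\|_\infty$ and $f_n\to f$ pointwise; by (i), $T(t)f_n\to T(t)f$ pointwise, and the uniform bound $\|T(t)f_n\|_{C^{\kappa-1}_b(\R^N)}\le C(t)\|f\|_\infty$ coming from \eqref{stimaapriori:a0} with $h=0$ forces, via Arzel\`a-Ascoli, $T(t)f_n\to T(t)f$ in $C^{\kappa-2}_{\mathrm{loc}}(\R^N)$; lower semicontinuity of the H\"older semi-norms then gives $T(t)f\in C^{\kappa-1}_b(\R^N)$ with the same bound. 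The main obstacle is the proof of (ii) near $t=0$, where the derivative estimates degenerate and Arzel\`a-Ascoli cannot be invoked directly; the cutoff decomposition above, exploiting $T(t)\mathbf{1}\equiv\mathbf{1}$ and positivity preservation, is what makes this boundary layer tractable.
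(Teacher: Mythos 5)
The paper itself does not prove this proposition --- it is stated with the remark that the proof ``can be obtained arguing as in \cite{Lo}'' --- so your argument has to be judged against the standard route rather than against a proof in the text. For (i)--(iii) your proof is essentially that standard route and is sound: the Riesz--Markov representation on $C_0(\R^N)$, the identity $T(t)\mathbf{1}=\mathbf{1}$ (which follows from uniqueness, since the constant $1$ solves \eqref{HCP}), and the cutoff decomposition with positivity are exactly the right tools, and the boundary-layer estimate near $t=0$ is the genuinely delicate point, which you handle correctly. Two small remarks on (ii): the compactness-plus-uniqueness step identifies the subsequential limit only once you know it is continuous up to $t=0$ with initial datum $f$, so logically the cutoff argument must come \emph{before} (not after) the invocation of uniqueness --- all the pieces are there, but the order of presentation hides this; and Theorem \ref{thm:constr}(iii) is applied to $\psi\in C_c(\R^N)$, not to $1-\psi$, which is not compactly supported.

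Part (iv) contains a genuine gap. You approximate a given $f\in B_b(\R^N)$ by a bounded sequence $\{f_n\}\subset C_b(\R^N)$ converging to $f$ \emph{pointwise everywhere}. No such sequence exists for a general bounded Borel function: pointwise limits of continuous functions are exactly the Baire class~$1$ functions, which have a dense set of continuity points, whereas e.g. $\chi_{\mathbb{Q}^N}$ is Borel but nowhere continuous. So the very first step of your argument for (iv) fails. The standard repairs are: (a) a functional monotone class argument, propagating the property through the Baire hierarchy; or, cleaner, (b) upgrade the gradient estimate. From \eqref{stimaapriori:a0} with $h=0$ and $\|\alpha\|=1$ one has $|T(t)g(x)-T(t)g(y)|\le Ct^{-1/2}\|g\|_\infty|x-y|$ for all $g\in C_b(\R^N)$; by inner regularity of the measures $p(t,x,\cdot)$ the supremum of $\bigl|\int g\,d(p(t,x,\cdot)-p(t,y,\cdot))\bigr|$ over $g\in C_b(\R^N)$ with $\|g\|_\infty\le 1$ equals the total variation norm, so
\begin{equation*}
\|p(t,x,\cdot)-p(t,y,\cdot)\|_{\mathrm{TV}}\le Ct^{-1/2}|x-y|,
\end{equation*}
which gives the same Lipschitz bound, hence continuity of $T(t)f$, for \emph{every} $f\in B_b(\R^N)$ at once. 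Writing $T(t)f=T(t/2)\bigl(T(t/2)f\bigr)$ with $T(t/2)f\in C_b(\R^N)$ and applying Theorem \ref{thm:constr}(iv) then yields $T(t)f\in C^{\kappa-1}_b(\R^N)$ as claimed. With this replacement the proof of (iv) closes; the rest of your argument stands.
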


Differently from what happens in the classical case when the
coefficients are bounded, in general the semigroup associated with
elliptic operators with unbounded coefficients is neither analytic
in $C_b(\R^N)$, nor strongly continuous in $BUC(\R^N)$.
 Assertion (ii) above in Proposition
\ref{prop-conv-compatti}, however, expresses the fact that the
semigroup $\{T(t)\}$ is bi-continuous for the topology of
locally uniform convergence $\tau_c$ (see \cite{Ku01, Ku03} or
\cite{Fa03}), or which is essentially the same it is a
locally-equicontinuous semigroup with respect to the mixed topology.
The mixed topology is finest locally convex topology agreeing with
$\tau_c$ on $\|\cdot\|_\infty$-bounded sets. (See {\cite{wiweger} or
\cite{sentilles:1972} for the definition of the mixed topology;
\cite{Fa03} for the equivalence of these two families of semigroups;
and \cite[Section IX.2.]{yosida} for locally-equicontinuous
semigroups). This allows us to associate an infinitesimal generator
$(A,D(A))$ to the semigroup (see \cite{Ku01,Ku03}):
\begin{align*}
D(A)&:=\Bigl\{f\in C_b(\R^N):\:\exists
\tau_c-\lim_{t\to0^+}\tfrac{T(t)f-f}{t}\mbox{ and } \sup_{t\in
]0,1]}\tfrac{\|T(t)f-f\|_\infty}{t}\Bigr\}\\
Af&:=\tau_c-\lim_{t\to0^+}\tfrac{T(t)f-f}{t}.
\end{align*}
With this definition the infinitesimal generator $(A,D(A))$ is a
Hille-Yosida operator, and the resolvent of $A$ can be calculated
\begin{equation*}
R(\lambda,A)f=\int_0^{+\infty}e^{-\lambda t}T(t)fdt
\end{equation*}
where the integral exists in the topology $\tau_c$ and for all
positive $\lambda$. In general one could replace here the
$\tau_c$-convergence by pointwise convergence resulting in the
so-called ``weak-generator'', in our case, however, this would not
result in any difference.

\begin{rem}
We note that assertion (iii) in Proposition \ref{prop-conv-compatti}
follows also directly from the first part of (ii). Actually, we even have the
equivalence of these two statements, for details see, e.g.,
\cite{Fa03}.
\end{rem}

The next proposition  characterizes the domain $D(A)$.

\begin{prop}\label{prop:char-dom} The following characterization holds true:
\begin{align}
D(A)=\Big\{f\in C_b(\R^N):&\exists \{f_n\}\subset C^2_b(\R^N),
\exists g\in C_b(\R^N): \no\\&f_n\to f,~{\mathscr A}f_n\to g
~\mbox{loc. uniformly in}~\R^N\no\\
&\mbox{ and}~~\sup_{n\in\N}\,(\|f_n\|_{\infty}+\|{\mathscr
A}f_n\|_{\infty})<+\infty\Big\}. \label{char:dom}
\end{align}
\pn Moreover, $Af={\mathscr A}f$ for any $f\in D(A)$. Here and
above, ${\mathscr A}f$ is meant in the sense of distributions.
\end{prop}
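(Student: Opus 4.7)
Let $\mathscr{D}$ denote the set on the right-hand side of \eqref{char:dom}. The plan is to establish $D(A)=\mathscr{D}$ through two inclusions, after first proving the key identification of $A$ with ${\mathscr A}$ on sufficiently smooth functions: \emph{if $\varphi\in C^2_b(\R^N)$ and ${\mathscr A}\varphi\in C_b(\R^N)$, then $\varphi\in D(A)$ and $A\varphi={\mathscr A}\varphi$.} To prove this key lemma, one exploits the approximating semigroups $\{T_\ve(t)\}$: since $\varphi\in C^2_b(\R^N)$ implies $\Delta_\star\varphi\in C_b(\R^N)$, one has ${\mathscr A}_\ve\varphi={\mathscr A}\varphi+\ve\Delta_\star\varphi\in C_b(\R^N)$, so $\varphi$ belongs to the weak-generator domain of $\{T_\ve(t)\}$ by standard uniformly elliptic theory, and the variation-of-constants identity $T_\ve(t)\varphi-\varphi=\int_0^t T_\ve(s){\mathscr A}_\ve\varphi\,ds$ holds. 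Passing to the limit $\ve\to 0^+$, the term $\ve\int_0^t T_\ve(s)\Delta_\star\varphi\,ds$ vanishes in sup-norm (it is bounded by $\ve t\|\Delta_\star\varphi\|_\infty$), whereas the locally uniform convergence $T_\ve(s){\mathscr A}\varphi\to T(s){\mathscr A}\varphi$ already observed in the proof of Theorem~\ref{thm:constr}(i), combined with dominated convergence, yields
\begin{equation*}
T(t)\varphi(x)-\varphi(x)=\int_0^t (T(s){\mathscr A}\varphi)(x)\,ds,\qquad x\in\R^N,\;t\ge0.
\end{equation*}
Dividing by $t$ and invoking the continuity of $s\mapsto T(s){\mathscr A}\varphi$ at $s=0$ locally uniformly (again by Theorem~\ref{thm:constr}(i) applied to the bounded continuous datum ${\mathscr A}\varphi$), the averages converge locally uniformly to ${\mathscr A}\varphi$, while the bound $\|T(t)\varphi-\varphi\|_\infty\le t\|{\mathscr A}\varphi\|_\infty$ supplies the required sup-condition. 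This gives $\varphi\in D(A)$ and $A\varphi={\mathscr A}\varphi$.

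For the inclusion $D(A)\subseteq\mathscr{D}$, given $f\in D(A)$, the plan is to take $f_n:=T(1/n)f$. By Theorem~\ref{thm:reg} one has $f_n\in C^{\kappa-1}_b(\R^N)\subset C^2_b(\R^N)$, and $f_n\to f$ locally uniformly since $T(t)f\to f$ locally uniformly by the proof of Theorem~\ref{thm:constr}(i). A straightforward commutation based on Proposition~\ref{prop-conv-compatti}(ii) gives $T(1/n)f\in D(A)$ with $AT(1/n)f=T(1/n)Af$; coupling this with the classical identity $\partial_tT(t)f={\mathscr A}T(t)f$ from the Cauchy problem and with the key lemma applied to $f_n$ forces ${\mathscr A}f_n=T(1/n)Af$. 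The latter converges locally uniformly to $Af\in C_b(\R^N)$, and the uniform bounds $\|f_n\|_\infty\le\|f\|_\infty$ and $\|{\mathscr A}f_n\|_\infty\le\|Af\|_\infty$ follow from contractivity, whence $f\in\mathscr{D}$.

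For the reverse inclusion $\mathscr{D}\subseteq D(A)$, fix $f,\{f_n\},g$ as in the definition of $\mathscr{D}$. The key lemma yields $f_n\in D(A)$ with $Af_n={\mathscr A}f_n$, so for any $\lambda>0$ the Hille--Yosida relation gives $f_n=R(\lambda,A)(\lambda f_n-{\mathscr A}f_n)$. Using $R(\lambda,A)h=\int_0^{+\infty}e^{-\lambda t}T(t)h\,dt$, the locally uniform and sup-norm bounded convergence $\lambda f_n-{\mathscr A}f_n\to \lambda f-g$, Proposition~\ref{prop-conv-compatti}(ii) and dominated convergence yield $R(\lambda,A)(\lambda f_n-{\mathscr A}f_n)\to R(\lambda,A)(\lambda f-g)$ locally uniformly. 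Comparison with $f_n\to f$ then gives $f=R(\lambda,A)(\lambda f-g)\in D(A)$, and applying $\lambda-A$ produces $Af=g$. The distributional identity $Af={\mathscr A}f$ follows by letting $n\to+\infty$ in the pairing $\langle{\mathscr A}f_n,\psi\rangle=\langle f_n,{\mathscr A}^*\psi\rangle$ for every $\psi\in C^\infty_c(\R^N)$, since all terms pass to their limits by locally uniform convergence on the compact support of $\psi$.

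The main obstacle is the key lemma itself: the passage $\ve\to 0^+$ in the variation-of-constants identity is the delicate point and relies essentially on the uniform contractivity $\|T_\ve(s)h\|_\infty\le\|h\|_\infty$ together with the locally uniform convergence of the approximating semigroups established in Section~\ref{main-sec}. Once the key lemma is at hand, both inclusions rest on a standard resolvent/approximation scheme exploiting only the bi-continuity properties of $\{T(t)\}$ already isolated in Proposition~\ref{prop-conv-compatti}.
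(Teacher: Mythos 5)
Your proof is correct, but it reaches the two nontrivial inclusions by routes that differ from the paper's in instructive ways. For the ``key lemma'' ($\varphi\in C^2_b(\R^N)$ with ${\mathscr A}\varphi\in C_b(\R^N)$ implies $\varphi\in D(A)$ and $A\varphi={\mathscr A}\varphi$), the paper does not integrate the equation in time: it first proves that $T(t)$ and ${\mathscr A}$ commute on $D_0({\mathscr A})$ by an $\ve\to 0^+$ limit (Lemma \ref{lemma:commut}(i)) and then computes $R(1,A){\mathscr A}f=\int_0^{+\infty}e^{-t}\partial_t T(t)f\,dt=-f+R(1,A)f$ by integration by parts in the Laplace transform, reading off membership in $D(A)$ from the resolvent identity. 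Your variation-of-constants identity $T(t)\varphi-\varphi=\int_0^t T(s){\mathscr A}\varphi\,ds$, obtained from the approximating semigroups and contractivity, is a more direct way to produce the generator limit and is equally rigorous, granted the standard fact that such $\varphi$ lies in the weak-generator domain of each $\{T_\ve(t)\}$ (the same sources the paper cites for Lemma \ref{lemma:commut}(i)). For the inclusion $D(A)\subseteq\hat D$, the paper regularizes on the resolvent side, writing $f=R(1,A)h$ and mollifying $h$, which only needs Lemma \ref{lemma:commut}(ii)--(iii); you regularize with the semigroup, $f_n=T(1/n)f$, which additionally uses the smoothing $T(t)\colon C_b(\R^N)\to C^{\kappa-1}_b(\R^N)$ and the identity ${\mathscr A}T(t)f=T(t)Af$ -- both available, so this works too. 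One phrasing should be tightened: you cannot literally ``apply the key lemma to $f_n$'' to conclude ${\mathscr A}f_n=T(1/n)Af$, since the key lemma presupposes ${\mathscr A}f_n\in C_b(\R^N)$, which is exactly what is at stake; the clean argument is that the classical pointwise derivative $\partial_tT(t)f={\mathscr A}T(t)f$ and the $\tau_c$-derivative $AT(t)f=T(t)Af$ of the bi-continuous semigroup must coincide pointwise, whence ${\mathscr A}f_n=T(1/n)Af$ is bounded by $\|Af\|_\infty$ and converges to $Af$ locally uniformly. The reverse inclusion $\hat D\subseteq D(A)$ and the distributional identification of $Af$ with ${\mathscr A}f$ are essentially identical to the paper's.
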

This tells us essentially that $C_b^2(\R^N)$ is a core for the
generator $A$ with respect to the mixed topology, or which is
the same is a bi-core with respect to $\tau_c$ (see \cite{Ku03}).
For the proof we use an invariance argument and need the following
preparatory lemma.
\begin{lemma}\label{lemma:commut}
For the semigroup $\{T(t)\}$ we can state the following.
\begin{enumerate}[\rm(i)]
\item
For any $t>0$, $T(t)$ commutes with ${\mathscr A}$ on $D_0({\mathscr
A}):=\{f\in C^2_b(\R^N):~{\mathscr A}f\in C_b(\R^N)\}$;
\item
if $\{f_n\}\subset C_b(\R^N)$ is a bounded sequence converging
locally uniformly to some function $f\in C_b(\R^N)$, then, for any
$\lambda>0$, $R(\lambda,A)f_n$ converges to $R(\lambda,A)f$, locally
uniformly in $\R^N$;
\item
for any $\lambda>0$, $R(\lambda,A)$ is a bounded operator mapping
$C^{h}_b(\R^N)$ into itself for any $h\in\N$ such that $h<\kappa$.
\end{enumerate}
\end{lemma}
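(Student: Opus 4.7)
The plan is to dispatch the three parts by independent arguments: an approximation argument for (i), a dominated convergence argument for (ii), and a direct application of the derivative estimates of Theorem \ref{thm:constr}(iv) for (iii).

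For part (i), fix $f\in D_0(\mathscr A)$ and recall from the construction in Theorem \ref{thm:constr} that $T_\ve(\cdot)f\to T(\cdot)f$ in $C^{1,\kappa-1}_{\rm loc}(]0,+\infty[\times\R^N)$ as $\ve\to 0^+$. The key observation is that for the \emph{uniformly elliptic} semigroup $\{T_\ve(t)\}$, whose generator on $C^2_b(\R^N)$ acts as $\mathscr A_\ve=\mathscr A+\ve\Delta_\star$, the identity $T_\ve(t)\mathscr A_\ve f=\mathscr A_\ve T_\ve(t)f$ is standard (both sides are bounded classical solutions of the same Cauchy problem with initial value $\mathscr A_\ve f$, and coincide by the maximum principle Proposition \ref{prop:maxprinc:0}(ii)). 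Since $\ve\Delta_\star f\to 0$ uniformly in $\R^N$ and since $T_\ve(t)g\to T(t)g$ locally uniformly for every $g\in C_b(\R^N)$, the left-hand side converges locally uniformly to $T(t)\mathscr A f$. For the right-hand side, the $C^{1,\kappa-1}_{\rm loc}$-convergence gives $\mathscr A T_\ve(t)f\to \mathscr A T(t)f$ locally uniformly, while $\ve \Delta_\star T_\ve(t)f\to 0$ locally uniformly by the same reason. Passing to the limit yields $T(t)\mathscr A f=\mathscr A T(t)f$.

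For part (ii), recall that $R(\lambda,A)f=\int_0^{+\infty}e^{-\lambda t}T(t)f\,dt$, the integral being meant pointwise (equivalently in $\tau_c$). By Proposition \ref{prop-conv-compatti}(ii), the hypothesis on $\{f_n\}$ gives $T(\cdot)f_n\to T(\cdot)f$ locally uniformly on $[0,+\infty[\times\R^N$, and the contractivity of $\{T(t)\}$ yields the uniform pointwise bound $\|T(t)f_n\|_\infty\le\sup_n\|f_n\|_\infty=:M$. Fixing a compact $K\subset\R^N$ and any $T_0>0$, I split the resolvent integral as $\int_0^{T_0}+\int_{T_0}^{+\infty}$; the first piece converges uniformly on $K$ by the locally uniform convergence, while the second is bounded by $2M\lambda^{-1}e^{-\lambda T_0}$ uniformly in $n$ and in $x\in K$, hence can be made arbitrarily small by choosing $T_0$ large. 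Combining gives the desired locally uniform convergence.

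For part (iii), fix $f\in C^h_b(\R^N)$ and $\alpha\in\N_0^N$ with $\|\alpha\|\le h$. By Lemma \ref{lem:propqh}(i) we have $q_h(|\alpha|)=0$, so the estimate \eqref{stimaapriori:a0} in Theorem \ref{thm:constr}(iv) specializes to
\begin{equation*}
\|D^\alpha T(t)f\|_\infty\le C e^{\omega t}\|f\|_{C^h_b(\R^N)},\qquad t>0,
\end{equation*}
for any $\omega>0$ (with $C=C(\omega,h,\|\alpha\|)$). Choosing $\omega\in ]0,\lambda[$, the function $t\mapsto e^{-\lambda t}\|D^\alpha T(t)f\|_\infty$ is dominated by the integrable function $Ce^{-(\lambda-\omega)t}\|f\|_{C^h_b(\R^N)}$. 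The continuity of $D^\alpha T(\cdot)f$ on $]0,+\infty[\times\R^N$ (again from Theorem \ref{thm:constr}(iv)) then allows differentiation under the integral sign, yielding $D^\alpha R(\lambda,A)f=\int_0^{+\infty}e^{-\lambda t}D^\alpha T(t)f\,dt$ and the bound $\|D^\alpha R(\lambda,A)f\|_\infty\le C(\lambda-\omega)^{-1}\|f\|_{C^h_b(\R^N)}$. Summing over $\|\alpha\|\le h$ gives the boundedness of $R(\lambda,A)$ on $C^h_b(\R^N)$. The mildest delicate point in the whole argument is the commutation step in (i), where one has to balance the two approximations (the $\ve$-regularization of the operator and the passage to the limit of the derivatives), but this is controlled by the already established locally uniform convergence in $C^{1,\kappa-1}$.
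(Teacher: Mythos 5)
Your proposal is correct and follows essentially the same route as the paper: for (i) both pass to the limit $\ve\to 0^+$ in the commutation identity $T_\ve(t)\mathscr{A}_\ve f=\mathscr{A}_\ve T_\ve(t)f$ using the $C^{1,\kappa-1}_{\rm loc}$ convergence of $T_\ve(\cdot)f$ and the uniform convergence $\mathscr{A}_\ve f\to\mathscr{A}f$, for (ii) both combine Proposition \ref{prop-conv-compatti}(ii) with contractivity (your explicit tail-splitting is just dominated convergence spelled out), and for (iii) both read off the claim from \eqref{stimaapriori:a0} with $q_h(|\alpha|)=0$. The only thin spot is your parenthetical justification of the $\ve$-level commutation itself (continuity of $\mathscr{A}_\ve T_\ve(t)f$ up to $t=0$ and the regularity needed to view $\mathscr{A}_\ve T_\ve(\cdot)f$ as a classical solution are not automatic for unbounded coefficients); the paper treats this as a known fact and cites \cite{bertoldi-lorenzi} for it.
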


\begin{proof}  (i). We begin the proof by recalling that, for any $t>0$,
$T_{\ve}(t)$ and ${\mathscr A}_{\ve}$ commute on $D(A_0)$ since they
commute on
\begin{equation*}
D_{\max}({\mathscr A}_{\ve}):=\Bigl\{g\in
C_b(\R^N)\cap\bigcap_{1<p<+\infty} W^{2,p}_{\rm loc}(\R^N):{\mathscr
A}_{\ve}g\in C_b(\R^N)\Bigr\}
\end{equation*}
(see e.g., \cite[Propositions 2.3.1, 2.3.6, 4.1.1 and Lemma
2.3.3]{bertoldi-lorenzi}). Hence, we have only to show that, for any
$f\in D(A_0)$, ${\mathscr A}_{\ve}T_{\ve}(t)f$ and
$T_{\ve}(t){\mathscr A}_\ve f$ converge to ${\mathscr A}T(t)f$ and
$T(t){\mathscr A} f$, respectively, as $\ve\to 0^+$. The proof of
Theorem \ref{thm:constr} shows that $T_{\ve}(t)f$ converges to
$T(t)f$ in $C^2(K)$, as $\ve\to 0^+$, for any compact set
$K\subset\R^N$. Therefore, ${\mathscr A}_{\ve}T_{\ve}(t)f$ converges
to ${\mathscr A}T(t)f$ locally uniformly in $\R^N$. On the other
hand, recalling that $\{T_{\ve}(t)\}$ is a contraction semigroup, we can write
\begin{align}
\|T_{\ve}(t){\mathscr A}_{\ve}f-T(t){\mathscr A}f\|_{C(K)}&\le
\|T_{\ve}(t)({\mathscr A}_{\ve}f-{\mathscr A}f)\|_{C(K)}
+\|(T_{\ve}(t)-T(t)){\mathscr A}f\|_{C(K)}\no\\[1mm]
&\le \|{\mathscr A}_{\ve}f-{\mathscr
A}f\|_{\infty}+\|(T_{\ve}(t)-T(t)){\mathscr A}f\|_{C(K)},
\label{commut-prop}
\end{align}
for any $t>0$. Since ${\mathscr A}_{\ve}f$ converges uniformly in
$\R^N$ to ${\mathscr A}f$ as $\ve\to 0^+$, estimate
\eqref{commut-prop} implies that $T_{\ve}(t){\mathscr A}_{\ve}f$
tends to $T(t){\mathscr A}f$, locally uniformly in $\R^N$.

(ii) This is a property shared by resolvents of generators of
bi-continuous semigroups, see \cite{Ku01, Ku03}. For the sake of
completeness we give the straightforward proof. Let $\{f_n\}$ and
$f$ be as in the statement of the lemma. Observe that for any
compact set $K\subset\R^N$,
\begin{equation}
\|R(\lambda,A)(f_n-f)\|_{C(K)}\le\int_0^{+\infty}e^{-\lambda
t}\|T(t)(f_n-f)\|_{C(K)}dt,\qq\;\,\lambda>0. \no
\end{equation}
\pn Theorem \ref{thm:constr}(ii) and Proposition
\ref{prop-conv-compatti}(ii) show that
 $\{\|T(\cdot)(f_n-f)\|_{C(K)}\}$ is a bounded sequence converging pointwise in $[0,+\infty[$
to $0$ as $n\to +\infty$. The assertion now follows from the
dominated convergence theorem.

(iii) It follows immediately from the estimate
\eqref{stimaapriori:a0} with $h=k$.
\end{proof}

\begin{proof}[Proof of Proposition $\ref{prop:char-dom}$] Taking Lemma
\ref{lemma:commut} into account, it is easy to check that, for any
$f\in D_0({\mathscr A})$, it holds that
\begin{align}
(R(1,A){\mathscr A}f)(x)&=
\int_0^{+\infty}e^{-t}({\mathscr A}T(t)f)(x)dt\no\\
&=\int_0^{+\infty}e^{-t}\left (\frac{\partial}{\partial
t}T(t)f\right )(x)dt =-f(x)+(R(1, A)f)(x), \label{int:byparts}
\end{align}
\pn for any $x\in\R^N$. Therefore, $f\in D(A)$ and $Af={\mathscr
A}f$, so that $D_0({\mathscr A})\subset D(A)$ and $A_{|D_0({\mathscr
A})}\equiv {\mathscr A}$. We could now conclude the proof by using
density and the invariance under $\{T(t)\}$ of $D(A_0)$ and by
referring, e.g., to \cite[Proposition 1.21]{Ku03}, or to
\cite[Proposition 2.12]{manca} (the analogous statement for strongly-continuous
semigroups is in \cite[Proposition II.1.7]{engel/nagel:2000}). We
nevertheless give a complete proof.

Let us fix $f\in \hat D$ (the function space defined by the
right-hand side of \eqref{char:dom}) and let $\{f_n\}\subset
C^2_b(\R^N)$ be a bounded sequence with respect to the sup-norm
which converges to $f$ locally uniformly in $\R^N$ and it is such
that the sequence $\{{\mathscr A}f_n\}\subset C_b(\R^N)$ is bounded
and converges locally uniformly in $\R^N$ to some function $g\in
C_b(\R^N)$. By the above results we know that
\begin{equation}
f_n=R(1,A)(f_n-{\mathscr A}f_n),\qq\;\,n\in\N. \label{star:fn}
\end{equation}

Lemma \ref{lemma:commut}(ii) allows us to take the limit as $n\to
+\infty$ in \eqref{star:fn}, getting $f=R(\lambda,A)(\lambda f-g)$,
so that $f\in D(A)$ and $Af=g$. We claim that $Af={\mathscr A}f$
(where ${\mathscr A}f$ is meant in the distributional sense). For
this purpose, it suffices to observe that, for any $\va\in
C^{\infty}_c(\R^N)$, we have
\begin{equation}
\int_{\R^N}\varphi{\mathscr A}f_n\,dx=\int_{\R^N}f_n{\mathscr
A}^*\varphi\, dx,\qq\;\,n\in\N, \label{2star}
\end{equation}
\pn where ${\mathscr A}^*$ is the formal adjoint of the operator
${\mathscr A}$. Letting $n\to +\infty$ in \eqref{2star}, the claim
follows. We have so proved that $\hat D$ is contained in $D(A)$ and
$A={\mathscr A}$ on $\hat D$.
\par
We now prove that $D(A)\subset\hat D$. For this purpose, we fix
$f\in D(A)$, and $h\in C_b(\R^N)$ be such that $f=R(1,A)h$. By
convolution, we can determine a sequence of smooth functions
$\{h_n\}\subset C^2_b(\R^N)$, bounded in $C_b(\R^N)$ and converging
locally uniformly to $h$ as $n\to +\infty$. By Lemma
\ref{lemma:commut}(ii) and (iii), the sequence $\{R(1,A)h_n\}$ is
contained in $C^2_b(\R^N)$ and it converges to $f$ locally uniformly
in $\R^N$. Further, arguing as in the proof of \eqref{int:byparts},
one can easily show that ${\mathscr A}R(1,A)h_n= -h_n+R(1,A)h_n$ for
any $n\in\N$. Hence, the sequence $\{{\mathscr A}R(1,A)h_n\}$ is
bounded in $C_b(\R^N)$ and it converges to $-h+f\in C_b(\R^N)$,
locally uniformly in $\R^N$. It follows that $f\in D(A)$.
\end{proof}

\section{Schauder estimates}
\label{sec-optimal} In this section we prove Schauder estimates for
the (distributional) solutions to the elliptic equation
\begin{equation}
\lambda u-{\mathscr A}u=f,\qq\;\,\lambda>0, \label{nonom:ellipt}
\end{equation}
\pn and to the non-homogeneous Cauchy problem
\begin{equation}
\left\{
\begin{array}{lll}
D_tu(t,x)={\mathscr A}u(t,x)+g(t,x),\q &t\in [0,\timeT], &x\in\R^N,\\[2mm]
u(0,x)=f(x), && x\in\R^N.
\end{array}
\right. \label{nonom:parab}
\end{equation}
Throughout the section, we assume that Hypotheses \ref{ipos-1} are
satisfied with $\kappa$ equal to the least common multiple of the
odd numbers between 1 and $2r+1$.

The main results of this section are collected in the following two
theorems.

\begin{theorem}
\label{main:1} Let $\theta\in ]0,1[$ and $\lambda>0$. Then, for any
$f\in C_b^{\theta}(\R^N)$ there exists a function $u\in {\mathscr
C}^{2+\theta}(\R^N)$ solving equation \eqref{nonom:ellipt} in the
sense of distributions. Moreover, there exists a positive constant
$C$, independent of $u$ and $f$, such that
\begin{equation}
\|u\|_{{\mathscr C}^{2+\theta}(\R^N)}\le
C\|f\|_{C^{\theta}_b(\R^N)}. \label{stima:ellipt:deg}
\end{equation}
\pn Such a function $u$ is the unique distributional solution to the
equation \eqref{nonom:ellipt} which is bounded and continuous in
$\R^N$ and it is twice continuously differentiable in $\R^N$ with
respect to the first $p_0$ variables, with bounded derivatives.
\end{theorem}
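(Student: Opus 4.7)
The plan is to realise the solution as the Laplace transform of the semigroup constructed in Theorem \ref{thm:constr} and then upgrade the uniform semigroup derivative bounds to anisotropic Schauder estimates via an interpolation argument of Lunardi type. Concretely, I would define
\begin{equation*}
u(x):=\int_0^{+\infty}e^{-\lambda t}(T(t)f)(x)\,dt,\qq x\in\R^N,
\end{equation*}
which is bounded and continuous since $\{T(t)\}$ is a contraction semigroup. For $f$ in the domain $D(A)$ of the weak generator one has $u=R(\lambda,A)f$, and Proposition \ref{prop:char-dom} then yields $\lambda u-\mathscr{A}u=f$ in the distributional sense. For a general $f\in C_b^\theta(\R^N)$ I would approximate $f$ by a bounded sequence $\{f_n\}\subset C_c^\infty(\R^N)$ converging locally uniformly to $f$, set $u_n:=R(\lambda,A)f_n$, and pass to the distributional limit in $\lambda u_n-\mathscr{A}u_n=f_n$ using Lemma \ref{lemma:commut}(ii) and Proposition \ref{prop-conv-compatti}.

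For the regularity of $u$, I would differentiate the integral formula under the integral sign, justifying this via the bounds of Theorems \ref{thm:constr}(iv) and \ref{thm:reg}. Since the estimate \eqref{stimaapriori:a0} holds for every integer $h<\kappa$, real interpolation between $h=0$ and $h=1$ produces
\begin{equation*}
\|D^\alpha T(t)f\|_\infty\le C\,t^{-(1-\theta)q_0(|\alpha|)-\theta q_1(|\alpha|)}e^{\omega t}\|f\|_{C_b^\theta(\R^N)},
\end{equation*}
valid for every multi-index $\alpha$ with $\|\alpha\|\le\kappa-1$. For $\alpha$ with $\|\alpha\|\le 2$ and $\alpha_j=0$ whenever $j>p_0$ (the derivatives entering the first block of the target norm), a direct computation shows that this exponent exceeds $-1$, so that $(D^\alpha u)(x)=\int_0^{+\infty}e^{-\lambda t}(D^\alpha T(t)f)(x)\,dt$ is absolutely convergent and delivers the claimed $L^\infty$ bounds.

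The main technical difficulty lies in producing the H\"older seminorms appearing in the definition of $\mathscr{C}^{2+\theta}(\R^N)$. For the $\theta$-H\"older seminorm of the second-order derivatives in the first block I plan to apply the classical splitting: for $\alpha$ with $\|\alpha\|=2$ supported in the first block I would write the increment of $D^\alpha u$ along a direction $e_k$ in the first block as an integral in $t$ and split it at the critical time $t_0=|h|^2$, bounding the short-time part by $2\|D^\alpha T(t)f\|_\infty\le Ct^{-1+\theta/2}\|f\|_{C_b^\theta}$ and the long-time part via the mean value theorem by $|h|\cdot\|D^\alpha D_kT(t)f\|_\infty\le C|h|\,t^{-3/2+\theta/2}\|f\|_{C_b^\theta}$; the two pieces balance at the optimal exponent $|h|^\theta$. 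For each block $j\ge 1$ the required H\"older exponent is $(2+\theta)/(2j+1)<1$, and the same splitting trick at the anisotropic critical time $t_0\sim|h|^{2j+1}$ --- which is precisely the scaling dictated by the function $q_0$ --- combined with the interpolated semigroup estimates will deliver the bound. A cleaner implementation is to invoke the abstract interpolation approach of \cite{Lu-sem}, identifying $\mathscr{C}^{2+\theta}(\R^N)$ with a real interpolation space between $C_b(\R^N)$ and a graph-type space adapted to $\mathscr{A}$, so that $R(\lambda,A)$ automatically shifts the interpolation index by one step.

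Uniqueness in the stated class is then immediate from the maximum principle Proposition \ref{prop:maxprinc:0}(i) applied to the difference of two such solutions, since the regularity assumed there matches exactly that of bounded continuous functions which are $C^2$ in the first $p_0$ variables with bounded derivatives.
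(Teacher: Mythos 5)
Your construction of $u$ as the Laplace transform $R(\lambda,A)f$ of the semigroup, the identification $Au={\mathscr A}u$ in the distributional sense via Proposition \ref{prop:char-dom}, and the uniqueness argument via Proposition \ref{prop:maxprinc:0}(i) coincide with the paper's proof; note only that the resolvent identity already gives $\lambda R(\lambda,A)f-{\mathscr A}R(\lambda,A)f=f$ for \emph{every} $f\in C_b(\R^N)$, so your approximation step for general $f\in C^\theta_b(\R^N)$ is superfluous. Where you genuinely diverge is the regularity claim $R(\lambda,A)f\in{\mathscr C}^{2+\theta}(\R^N)$. The paper never differentiates under the integral and splits in time: it first establishes the anisotropic interpolation identity $({\mathscr C}^{\beta}(\R^N),{\mathscr C}^{\kappa}(\R^N))_{\theta,\infty}=\tilde{\mathscr C}^{(1-\theta)\beta+\kappa\theta}(\R^N)$ (Proposition \ref{lem:4.1}), deduces from it the smoothing estimate $\|T(t)f\|_{{\mathscr C}^{\sigma}(\R^N)}\le Ct^{-(\sigma-\beta)/2}\|f\|_{C^{\beta}_b(\R^N)}$ (Proposition \ref{thm:5.4}), and then feeds this into the abstract machinery of \cite{Lu-sem} (Proposition \ref{prop:5.5}). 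Your ``cleaner implementation'' is therefore the paper's actual route, but in your write-up it is only named, not carried out: the two nontrivial inputs it needs are precisely Propositions \ref{lem:4.1} and \ref{thm:5.4}, neither of which follows from the interpolation between $h=0$ and $h=1$ that you perform (that interpolation acts only on the source space, not on the anisotropic target space).

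Your primary, hands-on route is viable for the first block but contains a concrete error in the blocks $j\ge1$. There you must bound the $(2+\theta)/(2j+1)$-H\"older seminorm of $u$ in the $x_j$-variables, and the available bounds are $|T(t)f(x+he_k)-T(t)f(x)|\le\min\bigl\{2\|f\|_\infty,\,C|h|\,t^{-(1-\theta)(2j+1)/2}e^{\omega t}\|f\|_{C^\theta_b}\bigr\}$, since $q_0(e_j^{(r+1)})=(2j+1)/2$ and $q_1(e_j^{(r+1)})=0$. Splitting at your proposed critical time $t_0=|h|^{2j+1}$, the long-time part is $C|h|\int_{t_0}^{+\infty}t^{-(1-\theta)(2j+1)/2}e^{-\lambda t}dt$, which for $(1-\theta)(2j+1)>2$ behaves like $|h|^{1+(2j+1)(1-(1-\theta)(2j+1)/2)}$; already for $j=1$ and $\theta=1/10$ this exponent equals $-1/20$, so the bound is useless. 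The scaling ``$t_0\sim|h|^{2j+1}$ dictated by $q_0$'' does not balance the two pieces; the correct choice is the time at which the two bounds in the minimum coincide, $t_0\sim|h|^{2/((1-\theta)(2j+1))}$, which makes both pieces of order $|h|^{2/((1-\theta)(2j+1))}$, and one checks $2/((1-\theta)(2j+1))\ge(2+\theta)/(2j+1)$ because $(1-\theta)(2+\theta)\le2$. With this correction (your balancing at $t_0=|h|^{2}$ for the second-order derivatives in the first block is fine), the direct argument goes through and constitutes a legitimate, more elementary alternative to the paper's interpolation-space approach, at the price of checking each block by hand rather than obtaining all blocks at once from Propositions \ref{lem:4.1} and \ref{thm:5.4}.
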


\begin{theorem}
\label{main:2} Let $\theta\in ]0,1[$, $\timeT>0$ and $f\in
C^{2+\theta}_b(\R^N)$ and $g\in C_b([0,\timeT]\times\R^N)$ be such
that $g(t,\cdot)\in C_b^{\theta}(\R^N)$ for any $t\in [0,\timeT]$,
and
\begin{equation*}
\sup_{t\in [0,\timeT]}\|g(t,\cdot)\|_{C^{\theta}_b(\R^N)}<+\infty.
\end{equation*}
Then, there exists a function $u\in C_b([0,\timeT]\times\R^N)$,
solution to problem \eqref{nonom:parab} in the sense of
distributions, such that $u(t,\cdot)\in {\mathscr
C}^{2+\theta}(\R^N)$ for any $t\in [0,\timeT]$ and
\begin{equation}
\sup_{t\in [0,\timeT]}\|u(t,\cdot)\|_{{\mathscr
C}^{2+\theta}(\R^N)}\le C\Bigl(\|f\|_{C^{2+\theta}_b(\R^N)}
+\sup_{t\in [0,\timeT]}\|g(t,\cdot)\|_{C^{\theta}_b(\R^N)}\Bigr),
\label{stima:parab:deg}
\end{equation}
\pn for some positive constant $C$, independent of $u,f,g$.
Moreover, $u$ is the unique distributional solution to problem
\eqref{nonom:parab} which is bounded and continuous in
$[0,\timeT]\times\R^N$, and there, it is twice continuously
differentiable with respect to the first $p_0$ spatial variables,
with bounded derivatives.
\end{theorem}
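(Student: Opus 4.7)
The natural candidate for the solution is given by the variation of constants formula
\begin{equation*}
u(t,x)=(T(t)f)(x)+\int_0^t(T(t-s)g(s,\cdot))(x)\,ds,
\end{equation*}
where $\{T(t)\}$ is the semigroup constructed in Section \ref{sec-sem}. The first task is to verify that $u$ is well-defined, bounded and continuous on $[0,\timeT]\times\R^N$, and that it solves \eqref{nonom:parab} in the distributional sense. Boundedness follows from the contractivity of $T(t)$ and the boundedness of $g$; continuity follows from Proposition \ref{prop-conv-compatti}(ii) combined with uniform integrability in $s$; to check the distributional equation I would test against $\varphi\in C^\infty_c(]0,\timeT[\times\R^N)$, integrate by parts and use that $A={\mathscr A}$ on $D(A)\supset C^2_b(\R^N)$ as given by Proposition \ref{prop:char-dom}.

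The core of the argument is the Schauder bound \eqref{stima:parab:deg}, which I would split in two. Since $C^{2+\theta}_b(\R^N)\hookrightarrow{\mathscr C}^{2+\theta}(\R^N)$ continuously, it suffices to control $\|T(t)f\|_{{\mathscr C}^{2+\theta}}$ and $\|I(t,\cdot)\|_{{\mathscr C}^{2+\theta}}$, where $I(t,x):=\int_0^t(T(t-s)g(s,\cdot))(x)\,ds$. For the first term, using the uniform-in-$\ve$ estimates of Theorem \ref{thm:3.2} together with Theorems \ref{thm:constr} and \ref{thm:reg}, I would show that for any $0<\alpha\le\theta'$,
\begin{equation*}
\|T(t)\varphi\|_{{\mathscr C}^{\theta'}(\R^N)}\le C\,t^{-(\theta'-\alpha)/2}\|\varphi\|_{{\mathscr C}^{\alpha}(\R^N)},\qquad t\in\,]0,1],
\end{equation*}
an analogue of \eqref{stima-anis}. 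The key observation is that the exponent $q_h(|\alpha|)$ appearing in \eqref{stimaapriori:a0}--\eqref{stimaapriori:a1} is designed exactly so that differentiating $k$-th block variables, which costs a factor $(2k+1)/2$ in the anisotropic scaling, matches the Hölder exponent $(2+\theta)/(2k+1)$ required by ${\mathscr C}^{2+\theta}$.

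For the convolution term $I(t,\cdot)$, I would differentiate under the integral and separate multi-indices by the block structure. For a second-order derivative $D_{ij}$ with $i,j\le p_0$, the estimate \eqref{stimaapriori:a0} with $h=\theta$ yields
\begin{equation*}
\|D_{ij}T(t-s)g(s,\cdot)\|_\infty\le C\,(t-s)^{-1+\theta/2}\sup_{s\in[0,\timeT]}\|g(s,\cdot)\|_{C^\theta_b(\R^N)},
\end{equation*}
an integrable singularity. For derivatives in higher blocks the bare bound is non-integrable, but one recovers integrability through the standard interpolation argument of \cite{Lu-sem}: subtract and add $g(t,\cdot)$, use $\int_0^t T(t-s){\mathscr A}\,ds=T(t)-\mathrm{id}$, and exploit the ${\mathscr C}^{\theta}$-regularity of $g(s,\cdot)$ to gain the missing $(t-s)^{\theta/(2k+1)}$ factor in block $k$. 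Summing the bounds over all relevant multi-indices and taking the supremum in $s$ produces \eqref{stima:parab:deg}.

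The main obstacle I anticipate is the anisotropic bookkeeping: one has to verify, index by index, that the exponent $q_h(|\alpha|)$ in the semigroup estimate lines up exactly with the Hölder order $(2+\theta-\|\alpha\|)/(2k+1)$ demanded by ${\mathscr C}^{2+\theta}$ when $\alpha$ sits in the $k$-th block, so that for a suitable choice of $h\in\{0,\theta,1,1+\theta,2\}$ one always lands at an integrable singularity and, simultaneously, the Hölder seminorm in the $k$-th block is controlled. Uniqueness is immediate: the difference of two such solutions satisfies \eqref{nonom:parab} with $f\equiv 0$, $g\equiv 0$, is bounded, and has classical $D_i$ and $D_{ij}$ for $i,j\le p_0$, so Proposition \ref{prop:maxprinc:0}(ii) forces it to be zero.
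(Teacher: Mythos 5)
Your proposal follows essentially the same route as the paper: the same variation-of-constants candidate, the Schauder bound via the smoothing estimate \eqref{stima-isotr-anisotr} plus the interpolation machinery of \cite{Lu-sem} (which the paper packages as Proposition \ref{prop:5.5} and does not reprove), and uniqueness from Proposition \ref{prop:maxprinc:0}(ii). The only substantive difference is in verifying that the convolution term actually solves the equation: the paper avoids differentiating under the integral for rough $g$ (where the drift term produces non-integrable singularities in the higher blocks) by approximating $g$ with $g_n\in C^{1,2}_b$, for which the convolution is a \emph{classical} solution thanks to \eqref{stimaapriori:a0} with $h=2$, and then passing to the limit in the weak formulation; your integration-by-parts argument can be made to work, but note that the inclusion $C^2_b(\R^N)\subset D(A)$ you invoke is false as stated, since the unbounded drift makes ${\mathscr A}f$ unbounded for general $f\in C^2_b(\R^N)$ (only $D_0({\mathscr A})=\{f\in C^2_b(\R^N):{\mathscr A}f\in C_b(\R^N)\}$ lies in $D(A)$).
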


To begin with, we prove an interpolation result. We need to
introduce the auxiliary spaces $\tilde {\mathscr C}^{\theta}(\R^N)$
($\theta\in ]0,+\infty[$) that are defined analogously to
 the spaces ${\mathscr C}^{\theta}(\R^N)$, with the
H\"older spaces $C^{\theta/(2j+1)}(\R^{p_j})$ being replaced by the
Zygmund spaces ${\mathcal C}^{\theta/(2j+1)}(\R^{p_j})$
($j=0,\ldots,r$); see Definition \ref{spazi:holder}. It is clear
that $\tilde {\mathscr C}^{\theta}(\R^N)={\mathscr
C}^{\theta}(\R^N)$ if $\theta/(2j+1)\notin\N$ for any
$j=0,\ldots,r$.

\begin{prop}\label{lem:4.1}
Fix $\theta\in ]0,1[$ and $\beta\in [0,\kappa[$ such that
$\beta/(2j+1)\notin\N$ for any $j=0,\ldots,r$. Then,
\begin{equation}
({\mathscr C}^{\beta}(\R^N),{\mathscr
C}^{\kappa}(\R^N))_{\theta,\infty} =\tilde {\mathscr
C}^{(1-\theta)\beta+\kappa\theta}(\R^N), \label{interp:1}
\end{equation}
\pn with equivalence of the corresponding norms. Here, ${\mathscr
C}^0(\R^N)=C_b(\R^N)$.
\end{prop}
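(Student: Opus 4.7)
The plan is to reduce the anisotropic interpolation identity to the classical one-dimensional Hölder--Zygmund interpolation theorem applied coordinate-wise. For each $\alpha\in [0,\kappa]$ and each $j=0,\ldots,r$, let $Y_j^{(\alpha)}(\R^{p_j})$ denote either $C_b^{\alpha/(2j+1)}(\R^{p_j})$ if $\alpha/(2j+1)\notin\N$ or the corresponding Zygmund space otherwise, and introduce the auxiliary Banach space
\begin{equation*}
X_j^{(\alpha)}:=L^{\infty}\bigl(\R^{N-p_j};\,Y_j^{(\alpha)}(\R^{p_j})\bigr),
\end{equation*}
with the natural sup-norm over the frozen variables $\hat x_j$. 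From Definition \ref{spazi:holder}, we have $\mathscr{C}^{\alpha}(\R^N)=\bigcap_{j=0}^rX_j^{(\alpha)}$ (and $\tilde{\mathscr{C}}^\alpha$ analogously) with the norm $\sum_j\|\cdot\|_{j,\alpha}$ equivalent to the intersection norm. Under our hypothesis $\beta/(2j+1)\notin\N$ for every $j$, the space $\mathscr{C}^{\beta}(\R^N)$ coincides with its ``Zygmund" counterpart.

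\emph{First step:} For each fixed $j$, I would invoke the classical real interpolation result
\begin{equation*}
\bigl(C_b^{\beta/(2j+1)}(\R^{p_j}),\,C_b^{\kappa/(2j+1)}(\R^{p_j})\bigr)_{\theta,\infty}=\mathcal{C}^{((1-\theta)\beta+\theta\kappa)/(2j+1)}(\R^{p_j}),
\end{equation*}
available for instance in \cite[Section 1.2.17]{Lu1}, observing that the hypothesis $\beta/(2j+1)\notin\N$ guarantees that the left endpoint is a genuine Hölder space while the right endpoint may legitimately hit an integer, thereby producing a Zygmund space. Combining this with the standard fact that real interpolation commutes with bounded and uniformly continuous vector-valued extensions on the parameter $\hat x_j$ (which follows immediately by considering the $K$-functional pointwise in $\hat x_j$ and taking the supremum), I obtain
\begin{equation*}
\bigl(X_j^{(\beta)},X_j^{(\kappa)}\bigr)_{\theta,\infty}=L^\infty\bigl(\R^{N-p_j};\,\mathcal{C}^{((1-\theta)\beta+\theta\kappa)/(2j+1)}(\R^{p_j})\bigr),
\end{equation*}
uniformly in $j$.

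\emph{Second step:} I have to pass from interpolating each $X_j$ to interpolating the intersection. The inclusion $(\mathscr{C}^{\beta},\mathscr{C}^{\kappa})_{\theta,\infty}\hookrightarrow \tilde{\mathscr{C}}^{(1-\theta)\beta+\theta\kappa}$ is the easy direction: any decomposition $f=a+b$ with $a\in\mathscr{C}^{\kappa}$, $b\in\mathscr{C}^{\beta}$ is simultaneously a decomposition in each pair $(X_j^{(\beta)},X_j^{(\kappa)})$, so the $K$-functional of $f$ for $(\mathscr{C}^{\beta},\mathscr{C}^{\kappa})$ dominates the one for each $(X_j^{(\beta)},X_j^{(\kappa)})$, and the first step yields the corresponding Zygmund norm in each block of variables. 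The reverse inclusion is the serious point: given $f\in\tilde{\mathscr{C}}^{(1-\theta)\beta+\theta\kappa}(\R^N)$, I need, for each $t>0$, a single decomposition $f=a(t)+b(t)$ with $\|a(t)\|_{\mathscr{C}^{\kappa}}\le c\,t^{\theta-1}\|f\|_{\tilde{\mathscr{C}}^{\dots}}$ and $\|b(t)\|_{\mathscr{C}^{\beta}}\le c\,t^{\theta}\|f\|_{\tilde{\mathscr{C}}^{\dots}}$ that is controlled \emph{simultaneously} in all $r+1$ directions.

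To construct such a decomposition I would use an anisotropic mollification: pick $\rho\in C_c^{\infty}(\R)$ with $\int\rho=1$ and $\int s^k\rho(s)\,ds=0$ for $1\le k\le \kappa$, set $\rho_{j,\varepsilon}(y_j)=\varepsilon^{-p_j(2j+1)}\rho_{p_j}(y_j/\varepsilon^{2j+1})$ on $\R^{p_j}$ with $\rho_{p_j}$ a product-type mollifier, and define $a(t)=(\rho_{0,t}\otimes\cdots\otimes\rho_{r,t})*f$, $b(t)=f-a(t)$. The scaling exponent $2j+1$ in the $j$-th block is precisely the one appearing in Definition \ref{spazi:holder}, so standard Taylor-expansion arguments (applied separately in each block and collected via Minkowski's inequality) provide the two-sided estimates with the required exponents. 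The hypothesis $\beta/(2j+1)\notin\N$ is used to justify that the contribution of $b(t)$ in the $j$-th direction is controlled by the Hölder seminorm (and not a Zygmund one) of $f$ in that direction.

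\emph{Main obstacle:} The delicate point is the construction of the common decomposition with uniform estimates across all $j=0,\ldots,r$, i.e.\ verifying that the single, anisotropically scaled mollifier simultaneously yields the correct $K$-estimates in every coordinate block while handling the case where $((1-\theta)\beta+\theta\kappa)/(2j+1)$ is an integer for some $j$. Once the uniform bounds on $a(t),b(t)$ are established via the vanishing-moments property of $\rho$ and the anisotropic Taylor expansion, the identity \eqref{interp:1} and the equivalence of norms follow at once from the $K$-functional definition of real interpolation.
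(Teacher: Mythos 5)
Your strategy is a genuinely different one from the paper's, but as written it has a real gap: the entire analytic content of the proposition is concentrated in the step you yourself label the ``main obstacle,'' namely the upper bound for the $K$-functional of the pair $(\mathscr C^{\beta},\mathscr C^{\kappa})$ via a single anisotropic mollification, and you do not carry it out. Two specific points make this more than a routine verification. First, the assertion in your first step that real interpolation ``commutes with bounded \dots vector-valued extensions on the parameter $\hat x_j$'' and that this ``follows immediately by considering the $K$-functional pointwise and taking the supremum'' is false as a general principle: taking suprema of pointwise $K$-functionals only gives the embedding $(X_j^{(\beta)},X_j^{(\kappa)})_{\theta,\infty}\hookrightarrow L^{\infty}(\R^{N-p_j};\mathcal C^{\cdot})$, i.e.\ the easy inclusion; the converse requires exhibiting a decomposition that is uniform in $\hat x_j$, which is exactly the same difficulty as the intersection problem you defer to the second step. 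Second, for the reverse inclusion you must verify the two-sided estimates $\|f-a(t)\|_{\mathscr C^{\beta}}\lesssim \varepsilon^{\omega-\beta}$, $\|a(t)\|_{\mathscr C^{\kappa}}\lesssim\varepsilon^{\omega-\kappa}$ with $\omega=(1-\theta)\beta+\kappa\theta$ \emph{simultaneously in all blocks}, including the blocks where $\omega/(2j+1)\in\N$ (there the hypothesis $f\in\tilde{\mathscr C}^{\omega}$ only gives a Zygmund bound, so second differences and the vanishing moments of $\rho$ must actually be used) and including the right endpoint $\mathscr C^{\kappa}$, for which $\kappa/(2j+1)$ is an integer for \emph{every} $j$ by the choice of $\kappa$ in Section \ref{sec-optimal}. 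None of these estimates are proved; they are precisely the content of \cite[Theorem 2.2]{L}, which you would in effect be re-deriving from scratch.

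By contrast, the paper avoids all hard $K$-functional computations: it quotes the known identity $(BUC(\R^N),\tilde{\mathscr C}^{\gamma}(\R^N))_{\theta,\infty}=\tilde{\mathscr C}^{\gamma\theta}(\R^N)$ from \cite[Theorem 2.2]{L}, upgrades $BUC$ to $C_b$ by reiteration, picks an irrational $\gamma>\kappa$ so that $\mathscr C^{\gamma}=\tilde{\mathscr C}^{\gamma}$, and then shows that $\mathscr C^{\kappa}(\R^N)$ lies in the classes $J_{\kappa/\gamma}$ and $K_{\kappa/\gamma}$ of the pair $(C_b(\R^N),\mathscr C^{\gamma}(\R^N))$ --- the only computation being the blockwise interpolation inequality \eqref{spino} --- so that two further applications of the Reiteration Theorem give \eqref{interp:1}. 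If you want to keep your direct approach, you must either actually prove the uniform mollifier estimates in every block (this is doable but is several pages of work) or, more economically, follow the paper and reduce to the already established endpoint case $\beta=0$ by reiteration.
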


\begin{proof}
We first prove \eqref{interp:1} in the case when $\beta=0$. For this
purpose, we recall that, in \cite[Theorem 2.2]{L}, the author has
proved that, for any $\gamma>0$ and any $\theta\in ]0,1[$, the
topological equality $(BUC(\R^N),\tilde{\mathscr
C}^{\gamma}(\R^N))_{\theta,\infty} =\tilde {\mathscr
C}^{\gamma\theta}(\R^N)$ holds. Since $BUC(\R^N)$ belongs to both
the classes $J_0(C_b(\R^N),\tilde {\mathscr C}^{\gamma}(\R^N))$ and
$K_0(C_b(\R^N),\tilde {\mathscr C}^{\gamma}(\R^N))$, the Reiteration
Theorem (see, e.g., \cite[Theorem 1.2.15]{Lu1}) implies that
\begin{equation}
(C_b(\R^N),\tilde{\mathscr C}^{\gamma}(\R^N))_{\theta,\infty}
=\tilde {\mathscr C}^{\gamma\theta}(\R^N), \label{interp:2-lun}
\end{equation}
with equivalence of the corresponding norms.

Let us now fix $\gamma\in\R\setminus{\mathbb Q}$ such that
$\gamma>\kappa$. This choice of $\gamma$ implies that ${\mathscr
C}^{\gamma}(\R^N)=\tilde {\mathscr C}^{\gamma}(\R^N)$.
 Therefore, the formula \eqref{interp:2-lun} with $\theta=\kappa/\gamma$
 yields the equality
$(C_b(\R^N),{\mathscr
C}^{\gamma}(\R^N))_{\kappa/\gamma,\infty}=\tilde {\mathscr
C}^{\kappa}(\R^N)$ with equivalence of the corresponding norms.
Since ${\mathscr C}^{\kappa}(\R^N)\subset \tilde {\mathscr
C}^{\kappa}(\R^N)$ with a continuous embedding, we easily see that
${\mathscr C}^{\kappa}(\R^N)$ is continuously embedded in
$(C_b(\R^N),{\mathscr C}^{\gamma}(\R^N))_{\kappa/\gamma,\infty}$,
or, equivalently, ${\mathscr C}^{\kappa}(\R^N)$ belongs to the class
$K_{\kappa/\gamma}(C_b(\R^N),{\mathscr C}^{\gamma}(\R^N))$.

Let us prove that ${\mathscr C}^{\kappa}(\R^N)$ belongs also to the
class $J_{\kappa/\gamma}(C_b(\R^N),{\mathscr C}^{\gamma}(\R^N))$.
For this purpose, we recall that, there exists a positive constant
$C$ such that
\begin{equation}
\|\psi\|_{C^{\kappa/(2j+1)}(\R^{p_j})}\le
C\|\psi\|_{\infty}^{1-\frac{\kappa}{\gamma}}
\|\psi\|_{C^{\gamma/(2j+1)}(\R^{p_j})}^{\frac{\kappa}{\gamma}},
\label{spino}
\end{equation}
\pn for any $\psi\in C^{\gamma/(2j+1)}(\R^{p_j})$ and any
$j=0,\ldots,r$ (see e.g., \cite[Proposition 1.1.3(ii)]{Lu1}).

\noindent Fix $f\in {\mathscr C}^{\gamma}(\R^N)$ and $1\leq j\leq r$. By
applying \eqref{spino} to the function
$$\psi=f(x_0,\ldots,x_{j-1},\cdot,x_{j+1},\ldots,x_r)$$ (where we have
split $x\in\R^N$ as $x=(x_0,\ldots,x_r)$, with $x_i\in\R^{p_i}$
($i=0,\ldots,r$)) and then, by taking the supremum when we let the
variable $(x_0,\ldots,x_{j-1},x_{j+1},\ldots,x_r)$ run over
$\R^{N-p_j}$, we conclude that $\|f\|_{j,\kappa}\le
C\|f\|_{\infty}^{1-\kappa/\gamma} \|f\|_{j,\gamma}^{\kappa/\gamma}$
(see \eqref{fjtheta} for the definition of these seminorms), so
that, summing over $j=0,\ldots,r$, we get
\begin{equation*}
\|f\|_{{\mathscr C}^{\kappa}(\R^N)}\le
C\|f\|_{\infty}^{1-\frac{\kappa}{\gamma}} \|f\|_{{\mathscr
C}^{\gamma}(\R^N)}^{\frac{\kappa}{\gamma}},
\end{equation*}
that is ${\mathscr C}^{\kappa}(\R^N)$ belongs to the class
$J_{\kappa/\gamma}(C_b(\R^N),{\mathscr C}^{\gamma}(\R^N))$. Since
$C_b(\R^N)$ belongs to both  classes $J_0(C_b(\R^N),{\mathscr
C}^{\gamma}(\R^N))$ and $K_0(C_b(\R^N),{\mathscr
C}^{\gamma}(\R^N))$, the Reiteration Theorem yields now the equality
$(C_b(\R^N),{\mathscr C}^{\kappa}(\R^N))_{\theta,\infty}=
(C_b(\R^N),{\mathscr C}^{\gamma}(\R^N))_{\theta\kappa/\gamma,\infty}$
(with equivalence of the corresponding norms) that, combined with
\eqref{interp:2-lun}, yields \eqref{interp:1} with $\beta=0$.

The general case when $\beta\in ]0,\kappa[$ is such that
$\beta/(2j+1)\notin\N$ for any $j=0,\ldots,r$ now follows from the
interpolation theorem. Indeed,
\begin{align*}
({\mathscr C}^{\beta}(\R^N),{\mathscr
C}^{\kappa}(\R^N))_{\theta,\infty}&=
((C_b(\R^N),{\mathscr C}^{\kappa}(\R^N))_{\beta/\kappa,\infty},{\mathscr C}^{\kappa}(\R^N))_{\theta,\infty}\no\\
&=(C_b(\R^N),{\mathscr
C}^{\kappa}(\R^N))_{(1-\theta)\beta/\kappa+\theta,\infty}.
\end{align*}
\end{proof}

%
%

\noindent The following  is a straightforward consequence of the
estimates in Theorem \ref{thm:constr}

\begin{lemma}
\label{lemma-trivial} For any $\omega>0$, there exists a positive
constant $C=C(\omega)$ such that
\begin{equation}
\|T(t)f\|_{{\mathscr C}^m(\R^N)}\le Ct^{-\frac{m}{2}}e^{-\omega
t}\|f\|_{C_b(\R^N)},\qq\;\,\mbox{holds for $t\in ]0,+\infty[$}.
\label{anisotr:2}
\end{equation}
\end{lemma}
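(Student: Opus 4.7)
The strategy is to unpack the anisotropic norm block-by-block and feed each resulting Hölder norm into the gradient estimate \eqref{stimaapriori:a0} of Theorem \ref{thm:constr}(iv). By Definition \ref{spazi:holder},
\begin{equation*}
\|T(t)f\|_{{\mathscr C}^m(\R^N)}=\sum_{j=0}^{r}\sup_{\hat x_j\in\R^{N-p_j}}\|(T(t)f)(x_0,\ldots,x_{j-1},\cdot,x_{j+1},\ldots,x_r)\|_{C^{m/(2j+1)}_b(\R^{p_j})},
\end{equation*}
so it suffices to bound, uniformly in $\hat x_j$, the $C^{m/(2j+1)}_b(\R^{p_j})$-norm of each partial function.

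The key computation is the following: if $\alpha\in\N_0^N$ has support in the $j$-th block (i.e.\ $|\alpha|=k\,e_j^{(r+1)}$ with $\|\alpha\|=k$), then
\begin{equation*}
q_0(|\alpha|)=\sum_{l=0}^{r}\tfrac{2l+1}{2}(k\,e_j^{(r+1)})_l=\tfrac{(2j+1)k}{2}.
\end{equation*}
Applying \eqref{stimaapriori:a0} with $h=0$ therefore yields, for such $\alpha$,
\begin{equation}\label{eq:block-j}
\|D^\alpha T(t)f\|_\infty\le C_{\omega}\,t^{-(2j+1)k/2}e^{\omega t}\|f\|_\infty,\qquad t>0,
\end{equation}
whenever $k\le \kappa-1$. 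Writing $m/(2j+1)=s_j+\theta_j$ with $s_j\in\N_0$ and $\theta_j\in[0,1)$, inequality \eqref{eq:block-j} with $k=s_j$ handles all integer-order pure block-$j$ derivatives, and when $m/(2j+1)$ is an integer this is all that is needed. The exponent in \eqref{eq:block-j} is then exactly $-s_j(2j+1)/2=-m/2$ for the top-order term, and strictly larger (smaller absolute value) for lower orders, which for $t\in]0,1]$ gives the desired bound by monotonicity of $t\mapsto t^{-a}$.

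To deal with the fractional case $\theta_j\in\,]0,1[$, I would invoke the standard interpolation inequality
\begin{equation*}
[D^{s_j}g]_{C^{\theta_j}(\R^{p_j})}\le C\,\|D^{s_j}g\|_\infty^{1-\theta_j}\|D^{s_j+1}g\|_\infty^{\theta_j},
\end{equation*}
applied to $g=T(t)f(\hat x_j,\cdot)$ with pure block-$j$ derivatives. Combining this with \eqref{eq:block-j} for $k=s_j$ and $k=s_j+1$ produces a $t$-exponent of
\begin{equation*}
-(1-\theta_j)\tfrac{s_j(2j+1)}{2}-\theta_j\tfrac{(s_j+1)(2j+1)}{2}=-\tfrac{(s_j+\theta_j)(2j+1)}{2}=-\tfrac{m}{2},
\end{equation*}
which is exactly the claimed rate (note that we need $s_j+1\le\kappa-1$, i.e., $m$ not too large relative to $\kappa$; this is the assumption under which the lemma is used in this section).

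Finally, to produce a single estimate valid for all $t>0$ rather than only on $]0,1]$, I would split into $t\le 1$ and $t\ge 1$. The case $t\le 1$ is covered by the bounds above. For $t\ge 1$ one writes $T(t)f=T(t-1)T(1)f$; by \eqref{stimaapriori:a0} with $h=0$ the function $T(1)f$ lies in $C^m_b(\R^N)$ with norm $\le C_\omega e^\omega\|f\|_\infty$, and then \eqref{stimaapriori:a0} with $h=m$ gives $\|T(t-1)T(1)f\|_{C^m_b(\R^N)}\le C_\omega e^{\omega(t-1)}\|T(1)f\|_{C^m_b(\R^N)}$; since $t^{-m/2}e^{\omega t}$ is bounded below on $[1,+\infty[$ by a positive constant depending only on $\omega$, this is absorbed into the desired right-hand side (up to slightly enlarging $\omega$ and $C$). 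The main technical point is just the correct bookkeeping of exponents in the block-wise interpolation; the rest is a mechanical consequence of Theorem \ref{thm:constr}.
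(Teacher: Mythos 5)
Your argument is correct and is exactly the block-by-block unpacking that the paper has in mind when it declares the lemma ``a straightforward consequence of the estimates in Theorem \ref{thm:constr}'' (no proof is written out there). Two small remarks: the $e^{-\omega t}$ in the statement must be a typo for $e^{\omega t}$ — your sign is the one actually used in Proposition \ref{thm:5.4}, and a contraction semigroup with $T(t)1=1$ cannot satisfy the decaying version. Also, your caveat ``$s_j+1\le\kappa-1$'' does not match how the lemma is invoked: Proposition \ref{thm:5.4} needs $m=\kappa$; there, however, $\kappa/(2j+1)\in\N$ for every $j$ by the choice of $\kappa$ as a least common multiple, so no interpolation is required, and the only derivatives of full order $\kappa$ you need are the pure block-$0$ ones, which are supplied by Theorem \ref{thm:reg} rather than by Theorem \ref{thm:constr}(iv).
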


Combining Theorem \ref{thm:constr} and Lemma \ref{lemma-trivial}, we
can now prove the following.

\begin{prop}
\label{thm:5.4} For any $\timeT>0$, $0<\beta\leq \theta<3$ with
$\beta,\theta\notin\N$, there exists a positive constant
$C=C(\timeT)$ such that, for any $f\in C^{\beta}_b(\R^N)$ the
following inequality holds:
\begin{equation}
\|T(t)f\|_{{\mathscr C}^{\theta}(\R^N)}\le
Ct^{-\frac{\theta-\beta}{2}}\|f\|_{C^{\beta}_b(\R^N)},\qq\;\,t\in
]0,+\infty[. \label{stima-isotr-anisotr}
\end{equation}
\end{prop}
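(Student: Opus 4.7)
The plan is to apply real interpolation in the target space, combining two endpoint estimates for $T(t)$ acting on $C^\beta_b(\R^N)$: a sharp smoothing bound into $\mathscr{C}^\kappa(\R^N)$ with singularity $t^{-(\kappa-\beta)/2}$, and a uniform (in $t$) bound into $\mathscr{C}^\beta(\R^N)$.

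The smoothing endpoint is a refinement of Lemma \ref{lemma-trivial}: for every integer $h\in\{0,1,2,3\}$ one has
\begin{equation*}
\|T(t)f\|_{\mathscr{C}^\kappa(\R^N)}\leq C t^{-(\kappa-h)/2}e^{\omega t}\|f\|_{C^h_b(\R^N)},\q t>0.
\end{equation*}
Indeed, the $\mathscr{C}^\kappa$-norm is the sum over blocks $j=0,\dots,r$ of $C^{\kappa/(2j+1)}_b$-norms in the $x_j$-variables alone, and a direct computation shows that for a multi-index $\alpha$ concentrated in the $j$-th block with $\alpha_j=\kappa/(2j+1)$ the exponent of Theorem \ref{thm:constr}(iv) equals $q_h(|\alpha|)=(\kappa-(2j+1)h)^+/2$, whose maximum over $j$ is $(\kappa-h)/2$, attained at $j=0$. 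Theorem \ref{thm:constr}(iv) supplies these bounds for $j\geq 1$ (where only derivatives of order strictly less than $\kappa$ appear), while Theorem \ref{thm:reg} provides the order-$\kappa$ derivatives in the first block. Standard interpolation $(C^{\lfloor\beta\rfloor}_b,C^{\lceil\beta\rceil}_b)_{\beta-\lfloor\beta\rfloor,\infty}=C^\beta_b$ then extends the estimate to the non-integer $\beta$.

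For the second endpoint, $T(t)$ is uniformly bounded on $C^\beta_b(\R^N)$: Theorem \ref{thm:constr}(iv) yields $\|T(t)f\|_{C^h_b(\R^N)}\leq C\|f\|_{C^h_b(\R^N)}$ for integer $h$ (since $q_h(|\alpha|)=0$ when $\|\alpha\|\leq h$), and this extends to non-integer $\beta$ by interpolation on the H\"older scale. Coupled with the continuous embedding $C^\beta_b\hookrightarrow\mathscr{C}^\beta$, one obtains $\|T(t)f\|_{\mathscr{C}^\beta(\R^N)}\leq C\|f\|_{C^\beta_b(\R^N)}$.

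Setting $\alpha:=(\theta-\beta)/(\kappa-\beta)\in\,]0,1[$, Proposition \ref{lem:4.1} identifies the interpolation space as $(\mathscr{C}^\beta,\mathscr{C}^\kappa)_{\alpha,\infty}=\tilde{\mathscr{C}}^\theta$, and the assumptions $\theta\notin\N$ and $\theta<3$ force $\theta/(2j+1)\notin\N$ for every $j$, so this space coincides with $\mathscr{C}^\theta$. Interpolating the two endpoint estimates produces
\begin{equation*}
\|T(t)f\|_{\mathscr{C}^\theta(\R^N)}\leq C\bigl(t^{-(\kappa-\beta)/2}\bigr)^\alpha\|f\|_{C^\beta_b(\R^N)}=Ct^{-(\theta-\beta)/2}\|f\|_{C^\beta_b(\R^N)},
\end{equation*}
which is \eqref{stima-isotr-anisotr}. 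The main obstacle is the first step: verifying the refined smoothing estimate requires unravelling the block-by-block structure of the $\mathscr{C}^\kappa$-norm, matching it carefully with the exponent $q_h$ of Theorem \ref{thm:constr}(iv), and invoking Theorem \ref{thm:reg} to access the order-$\kappa$ derivatives in the first block.
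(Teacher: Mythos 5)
Your argument is correct and is essentially the paper's: both rest on Proposition \ref{lem:4.1} together with the derivative estimates of Theorems \ref{thm:constr}(iv) and \ref{thm:reg}, combined by real interpolation in source and target. The only difference is organizational — you obtain the endpoint bound $\|T(t)\|_{{\mathscr L}(C^h_b(\R^N),{\mathscr C}^{\kappa}(\R^N))}\le Ct^{-(\kappa-h)/2}e^{\omega t}$ directly from the explicit value of $q_h$ on block-concentrated multi-indices, whereas the paper recovers the same family by interpolating only the two extreme cases $h=0$ (Lemma \ref{lemma-trivial}) and $h=\kappa$ and then performs a third interpolation; both routes are valid.
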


\begin{proof}
The proof follows from an interpolation argument. To simplify the
notation, in the sequel we denote by $\omega$ any positive number
and by $C$ a positive constant, possibly depending on $\omega$ but
being independent of $t$ and $f$, which may vary from line to line.
By applying \cite[Proposition 1.2.6]{Lu1} with
$X_1=X_2=Y_1=C_b(\R^N)$, $Y_2={\mathscr C}^{\kappa}(\R^N)$, and by
taking estimate \eqref{anisotr:2}, Theorem \ref{thm:constr}(ii)
(which implies that $\{T(t)\}$ is a contractive semigroup) and
Proposition \ref{lem:4.1} into account, we obtain
\begin{equation}
\|T(t)\|_{{\mathscr L}(C_b(\R^N),{\mathscr C}^{\theta_1}(\R^N))}\le
Ct^{-\frac{\theta_1}{2}}e^{\omega t},\qq\;\, t\in ]0,+\infty[,
\label{anisotr:3}
\end{equation}
for any $\theta_1\in ]0,\kappa[$ such that $\theta_1$ is not
rational. Of course, \eqref{anisotr:3} holds also with $\theta_1=0$.
Using again \cite[Proposition 1.1.13]{Lu1} now with $X_1=C_b(\R^N)$,
$X_2=C_b^{\kappa}(\R^N)$, $Y_1=Y_2={\mathscr C}^{\kappa}(\R^N)$, we
get
\begin{equation}
\|T(t)\|_{{\mathscr L}(C_b^{\theta_2}(\R^N),{\mathscr
C}^{\kappa}(\R^N))}\le Ct^{-\frac{\kappa-\theta_2}{2}}e^{\omega
t},\qq\;\, t\in ]0,+\infty[, \label{anisotr:4}
\end{equation}
for any $\theta_2\in ]0,\kappa[$ such that $\theta_2$ is not integer
and even for $\theta_2=0,\kappa$. Finally, interpolating the
estimates \eqref{anisotr:3} and \eqref{anisotr:4}, we get
\begin{equation*}
\|T(t)\|_{{\mathscr L}(C_b^{\theta_3\theta_2}(\R^N),\tilde{\mathscr
C}^{(1-\theta_3)\theta_1+\kappa\theta_3}(\R^N))}\le
Ct^{-\frac{(1-\theta_3)\theta_1+(\kappa-\theta_2)\theta_3}{2}}e^{\omega
t},\qq\;\, t\in ]0,+\infty[,
\end{equation*}
and \eqref{stima-isotr-anisotr} follows by taking
$\theta_1,\theta_2,\theta_3$ such that $\theta_2\theta_3=\beta$ and
$(1-\theta_3)\theta_1+\kappa\theta_3=\theta$.
\end{proof}

The estimate \eqref{stima-isotr-anisotr} is the keystone in the proof of
Theorems \ref{main:1} and \ref{main:2}. The candidate to be the
solutions to the equation \eqref{nonom:ellipt} and the
non-homogeneous Cauchy problem \eqref{nonom:parab} are,
respectively, the functions $R(\lambda,A)f$ and $u$ defined by
\begin{equation}
u(t,x)=(T(t)f)(x)+\int_0^t(T(t-s)g(s,\cdot))(x)ds,\qq\;\,t\in
[0,\timeT],\;\,x\in\R^N. \label{variat-const}
\end{equation}

The results in the following proposition are now a straightforward
consequence of the estimate \eqref{stima-isotr-anisotr} and the interpolation arguments in
\cite[Section 3]{Lu-sem}. For this reason we skip the proof,
referring the reader to the quoted paper.
\begin{prop}
\label{prop:5.5} For fix $\theta\in ]0,1[$ and $\timeT>0$ the
following are true.
\begin{enumerate}[\rm (i)]
\item
For any $f\in C^{\theta}_b(\R^N)$, the function $R(\lambda,A)f$
belongs to ${\mathscr C}^{2+\theta}(\R^N)$ and the estimate
\eqref{stima:ellipt:deg} is satisfied by some positive constant $C$
independent of $f$.
\item
For any $f\in C^{2+\theta}_b(\R^N)$ and any function $g\in
C([0,\timeT]\times\R^N)$ such that $g(t,\cdot)\in
C^{\theta}_b(\R^N)$ for any $t\in [0,\timeT]$, with $\sup_{t\in
[0,\timeT]}\|g(t,\cdot)\|_{C^{\theta}_b(\R^N)}<+\infty$, the
function $u$ in \eqref{variat-const} is bounded and continuous in
$[0,\timeT]\times\R^N$. Moreover, $u(t,\cdot)\in {\mathscr
C}^{2+\theta}(\R^N)$ for any $t\in [0,\timeT]$ and estimate
\eqref{stima:parab:deg} is satisfied by some positive constant $C$
independent of $f$ and $g$.
\end{enumerate}
\end{prop}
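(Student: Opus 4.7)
The plan is to invoke the interpolation machinery of \cite[Section~3]{Lu-sem}, whose sole input is the smoothing estimate \eqref{stima-isotr-anisotr} together with the contractivity of $\{T(t)\}$ on $C_b(\R^N)$ established in Theorem \ref{thm:constr}. The strategy is standard in Lunardi's treatment of degenerate Schauder estimates (cf.\ \cite{L,Lo,nic}): one identifies the anisotropic H\"older spaces $\mathscr{C}^{\alpha}(\R^N)$ with real interpolation spaces between $C_b(\R^N)$ and a suitable domain of $\mathscr{A}$, and then exploits the boundedness of the resolvent $R(\lambda,A)$ between the relevant interpolation couples.

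For part (i), I would set $u:=R(\lambda,A)f=\int_0^{+\infty}e^{-\lambda t}T(t)f\,dt$; by Theorem \ref{thm:constr}(iv), Theorem \ref{thm:reg}, and Proposition \ref{prop:char-dom}, $u\in D(A)$, the identity $\lambda u-\mathscr{A}u=f$ holds in the distributional sense, and $u$ admits classical second-order spatial derivatives in the first $p_0$ variables. Applying \eqref{stima-isotr-anisotr} with $\beta=\theta$ and $\theta':=2+\theta-\eta$ for arbitrary small $\eta\in\,]0,\theta[$ yields $\|T(t)f\|_{\mathscr{C}^{2+\theta-\eta}}\le Ct^{-1+\eta/2}\|f\|_{C_b^{\theta}}$, which is integrable against $e^{-\lambda t}$ on $]0,+\infty[$ and gives $u\in\mathscr{C}^{2+\theta-\eta}(\R^N)$ for every small $\eta>0$; to recover the endpoint $\eta=0$, I would use the trace-space characterization of $\mathscr{C}^{2+\theta}(\R^N)$ (based on Proposition \ref{lem:4.1} coupled with the semigroup identification below) applied to the family $\{T(t)u\}$, relying on the commutation $\mathscr{A}T(t)u=T(t)(\lambda u-f)$ from Lemma \ref{lemma:commut} and on a further application of \eqref{stima-isotr-anisotr} with datum $\lambda u-f\in\mathscr{C}^{\theta}$. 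Uniqueness in the stated class is immediate from the elliptic maximum principle, Proposition \ref{prop:maxprinc:0}(i).

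For part (ii), the candidate is the function $u$ given by the variation-of-constants formula \eqref{variat-const}. The first summand $T(t)f$ lies in $\mathscr{C}^{2+\theta}(\R^N)$ uniformly in $t\in [0,\timeT]$, since $f\in C_b^{2+\theta}(\R^N)\subset\mathscr{C}^{2+\theta}(\R^N)$ and \eqref{stima-isotr-anisotr} with $\beta=\theta'=2+\theta$ (admissible because $2+\theta\in\,]2,3[\setminus\N$) gives $\|T(t)f\|_{\mathscr{C}^{2+\theta}}\le C\|f\|_{C_b^{2+\theta}}$ for small $t$, while contractivity handles $t$ bounded away from $0$. For the convolution $v(t,x):=\int_0^t(T(t-s)g(s,\cdot))(x)\,ds$, the same interpolation argument of (i) applied pointwise in the time variable $s$, combined with the integrability of $(t-s)^{-1+\eta/2}$ against $\|g(s,\cdot)\|_{C_b^{\theta}}$, produces $v(t,\cdot)\in\mathscr{C}^{2+\theta}(\R^N)$ uniformly in $t\in [0,\timeT]$ with the norm bound \eqref{stima:parab:deg}. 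Uniqueness follows again from Proposition \ref{prop:maxprinc:0}(ii).

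The principal technical obstacle---and the reason the authors defer the argument to \cite{Lu-sem}---lies in the identification of $\mathscr{C}^{\alpha}(\R^N)$ with the real interpolation space $(C_b(\R^N),D(A))_{\alpha/2,\infty}$ by the trace method. Because $\{T(t)\}$ is only bi-continuous on $C_b(\R^N)$, not strongly continuous, the classical Da~Prato--Grisvard theorem does not apply verbatim; one has to redo the trace-space characterization in the bi-continuous framework, using the locally uniform and pointwise convergence properties of $\{T(t)\}$ collected in Proposition \ref{prop-conv-compatti} together with Proposition \ref{lem:4.1} to deal with the exceptional integer exponents $\alpha\in(2j+1)\N$. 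Once this abstract identification is in place, the Schauder estimates reduce to the elementary integral manipulations sketched above.
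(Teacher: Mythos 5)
Your proposal follows essentially the same route as the paper, which in fact gives no proof at all here but simply declares the proposition a consequence of estimate \eqref{stima-isotr-anisotr} and the interpolation arguments of \cite[Section 3]{Lu-sem}; your sketch is a reasonable unpacking of that citation, correctly identifying that the whole content lies in the endpoint case (where the naive bound $t^{-1+\eta/2}$ degenerates as $\eta\to 0$) and that this is resolved by the interpolation-space characterization of $\tilde{\mathscr C}^{2+\theta}(\R^N)$ from Proposition \ref{lem:4.1} combined with a splitting of the Laplace-transform, respectively convolution, integral as in \cite{Lu-sem}.
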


We can now complete the proofs of Theorems \ref{main:1} and
\ref{main:2}.

\begin{proof}[Proof of Theorem $\ref{main:1}$] By Proposition
\ref{prop:char-dom}, we know that $A\psi={\mathscr A}\psi$ for any
$\psi$ in $D(A)$, where ${\mathscr A}\psi$ is meant in the sense of
distributions. Hence, the resolvent equality immediately implies
that the function $R(\lambda,A)f$ is a distributional solution of
the equation \eqref{nonom:ellipt}. Moreover, by Proposition
\ref{prop:5.5}(i), $R(\lambda,A)f\in {\mathscr C}^{2+\theta}(\R^N)$
and satisfies estimate \eqref{stima:ellipt:deg}. As a byproduct,
Proposition \ref{prop:maxprinc:0}(i) implies that $R(\lambda,A)f$ is
the unique distributional solution to the equation
\eqref{nonom:ellipt} satisfying the properties of Theorem
\ref{main:1}. The proof is now complete.
\end{proof}

\begin{proof}[Proof of Theorem $\ref{main:2}$] The uniqueness part of
the statement follows immediately from the maximum principle in
Proposition \ref{prop:maxprinc:0}(ii). Moreover, by virtue of
Proposition \ref{prop:5.5}, we can limit
ourselves to proving that the convolution term in
\eqref{variat-const}, that we simply denote by $v$, is a
distributional solution to \eqref{nonom:parab}, with $f\equiv 0$.
Actually for smooth  $g$ with compact support this is an easy and
classical argument using variation of constants.  For the general case we pick a
sequence $\{g_n\}\subset C^{1,2}_b([0,\timeT]\times\R^N)$, bounded
in the sup-norm, and converging locally uniformly in
$[0,\timeT]\times\R^N$ to $g$. Moreover, for any $n\in\N$, we denote
by $v_n$ the convolution function defined as $v$, but with $g$ being
replaced by $g_n$. As already indicated above, a straightforward
computation, based on estimate \eqref{stimaapriori:a0} with
$\|\alpha\|=2$ and $h=2$, shows that
$v_n$ is a classical solution to problem \eqref{nonom:parab} (with
$f\equiv 0$ and $g$ being replaced by $g_n$). Moreover, its sup-norm
may be bounded by a positive constant, independent of $n$ and, by
Proposition \ref{prop-conv-compatti}, $v_n$ converges to $v$
pointwise in $[0,\timeT]\times\R^N$.

Now, we observe that, for any smooth function $\va\in
C^{\infty}_c(]0,\timeT[\times\R^N)$, it holds that
\begin{align*}
\int\limits_{]0,\timeT[\times\R^N}\hskip-1em g_n\va\,
dtdx=\int\limits_{]0,\timeT[\times\R^N}\hskip-1em (D_tv_n-{\mathscr
A} v_n)\va\, dtdx =\int\limits_{]0,\timeT[\times\R^N}\hskip-1em
v_n(-D_t\va-{\mathscr A}^{*}\va)\,dtdx,
\end{align*}
\pn where ${\mathscr A}^{*}$ is the formal adjoint to operator
${\mathscr A}$. Letting $n\to +\infty$, we deduce that $v$ is a
distributional solution of \eqref{nonom:parab} with $f\equiv 0$.
\end{proof}

\appendix
\section{Technical results}
\label{sec:tec}
\begin{lemma}
Suppose that ${\rm Ker}(Q(x))$ is independent of $x\in\R^N$. Then,
the following conditions are equivalent:
\begin{enumerate}[\rm(i)]
\label{lemma:linear-algebra}
\item
for any $x\in\R^N$, ${\rm Ker}(Q(x))$ does not contain non-trivial
subspaces which are $B^*$-invariant;
\item
for any $x\in\R^N$, let
$W(x)=\{\xi\in\R^N:Q(x)(B^*)^k\xi=0,\;\,k\in\N_0\}$. Then,
$W(x)=\{0\}$;
\item
for any $x\in\R^N$ and any $r\in\N$, let $W_r(x)=\{\xi\in\R^N:
Q(x)(B^*)^k\xi=0,\;\,k=0,\ldots,r-1\}$. Then, there exists $k_0\le
N$, independent of $x$, such that $W_{k_0}(x)=\{0\}$;
\item
the matrix $Q_t(x)=\int_0^te^{sB}Q(x)e^{sB^*}ds$ is positive
definite for any $t>0$ and any $x\in\R^N$;
\item
the rank of the matrix ${\mathscr
F}^{(r)}(x)=[Q(x),BQ(x),B^2Q(x),\ldots,B^rQ(x)]$ is $N$, for any
$x\in\R^N$ and some $r<N$, independent of $x$.
\end{enumerate}
\end{lemma}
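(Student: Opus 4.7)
The plan is to prove the chain of equivalences $(\text{i})\Leftrightarrow(\text{ii})\Leftrightarrow(\text{iii})\Leftrightarrow(\text{v})$ by elementary linear-algebra manipulations based on the single standing hypothesis $\ker(Q(x))=K$ independent of $x$, and then to close the loop by the standard Kalman argument $(\text{ii})\Leftrightarrow(\text{iv})$. A convenient observation at the outset is that each of the subspaces
\begin{equation*}
W_r(x)=\bigcap_{k=0}^{r-1}\ker\!\bigl(Q(x)(B^*)^k\bigr)=\bigl\{\xi:(B^*)^k\xi\in\ker(Q(x)),\;k=0,\dots,r-1\bigr\}
\end{equation*}
and hence also $W(x)=\bigcap_{r\ge 1}W_r(x)$ depend only on $\ker(Q(x))=K$ and on $B$, so they are independent of $x$. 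This will render the uniformity (in $x$) requirements in (iii) and (v) essentially automatic.

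For $(\text{i})\Leftrightarrow(\text{ii})$ I would simply note that $W(x)$ is by construction $B^*$-invariant and contained in $K$, and conversely any $B^*$-invariant subspace contained in $K$ is contained in $W(x)$; hence $W(x)$ is the largest $B^*$-invariant subspace of $K$, which is trivial precisely under (i). For $(\text{ii})\Leftrightarrow(\text{iii})$, the chain $W_1(x)\supseteq W_2(x)\supseteq\cdots$ of linear subspaces of $\R^N$ must stabilise in at most $N$ steps, so $W(x)=W_N(x)$; then (ii) forces $W_N(x)=\{0\}$ and we can take $k_0=N$, whereas conversely $W_{k_0}(x)=\{0\}$ yields $W(x)=\{0\}$. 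For $(\text{iii})\Leftrightarrow(\text{v})$ I would use that since $Q(x)$ is symmetric, $\xi^*B^kQ(x)=0$ is equivalent to $Q(x)(B^*)^k\xi=0$; hence the left null space of $\mathscr F^{(r)}(x)$ equals $W_{r+1}(x)$, and $\mathscr F^{(r)}(x)$ has rank $N$ exactly when $W_{r+1}(x)=\{0\}$.

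The remaining equivalence $(\text{ii})\Leftrightarrow(\text{iv})$ is the classical Kalman identity: since $Q(x)$ is symmetric and non-negative, one has the representation
\begin{equation*}
\langle Q_t(x)\xi,\xi\rangle=\int_0^t\bigl|Q(x)^{1/2}e^{sB^*}\xi\bigr|^2\,ds,
\end{equation*}
so $Q_t(x)$ fails to be positive definite iff $Q(x)^{1/2}e^{sB^*}\xi\equiv 0$ on $[0,t]$ for some $\xi\neq 0$, iff $Q(x)e^{sB^*}\xi\equiv 0$ there (as $\ker Q^{1/2}=\ker Q$). Expanding the exponential and differentiating at $s=0$ shows that this is equivalent to $Q(x)(B^*)^k\xi=0$ for every $k\in\N_0$, i.e.\ $\xi\in W(x)$. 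Hence $Q_t(x)$ is positive definite for every $t>0$ iff $W(x)=\{0\}$.

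I do not expect a real obstacle here; the only mildly delicate point is to check carefully that the subspaces $W_r(x)$ and $W(x)$ are genuinely $x$-independent (so that the integer $k_0$ in (iii) and the integer $r$ in (v) can be chosen uniformly in $x$), but this is immediate from the constancy of $\ker Q(x)$. Everything else is standard linear algebra together with the integral identity above, exactly in the spirit of \cite[Proposition A.1]{LP}.
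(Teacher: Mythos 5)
Your plan is correct and follows essentially the same route as the paper's proof: the same chain of equivalences $(\mathrm{i})\Leftrightarrow(\mathrm{ii})$, $(\mathrm{ii})\Leftrightarrow(\mathrm{iii})$, $(\mathrm{ii})\Leftrightarrow(\mathrm{iv})$, $(\mathrm{iii})\Leftrightarrow(\mathrm{v})$, with $W(x)$ identified as the largest $B^*$-invariant subspace of $\ker Q(x)$, the stabilizing decreasing chain $W_r(x)$, the positivity/power-series argument for $Q_t(x)$, and the identification of the left null space of ${\mathscr F}^{(r)}(x)$ with one of the $W_j(x)$. Your index $W_{r+1}$ for that null space is in fact the accurate one (the paper writes $W_{r-1}$, an off-by-two slip), and your choice $k_0=N$ in (iii) uses the same stabilization argument the paper phrases via $B^*$-invariance of the stabilized subspace.
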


\begin{proof}
We will show that $(i)\Leftrightarrow (ii)$,
$(ii)\Leftrightarrow (iii)$, $(ii)\Leftrightarrow (iv)$,
$(iii)\Leftrightarrow(v)$. We preliminarily note that both $W(x)$
and $W_r(x)$ are independent of $x$, so that, in the rest of the
proof, we simply write $W$ and $W_r$ instead of $W(x)$ and $W_r(x)$.

\noindent {\em $(i)\Leftrightarrow (ii)$:} To prove this equivalence,
it suffices to observe that, for any $x\in\R^N$, the set $W(x)$ is
contained in ${\rm Ker}(Q(x))$ and is its largest subspace, which is
invariant for $B^*$.

\noindent{\em  $(ii)\Leftrightarrow (iii)$:} Of course, we have only
to prove that $(ii)\Rightarrow (iii)$. So, let us suppose that
$W=\{0\}$. Since, $W_r\supset W_{r+1}$, then ${\rm dim}(W_r)\ge{\rm
dim}(W_{r+1})$ for any $r\in\N$. Further, ${\rm dim}(W_1)={\rm
dim}({\rm Ker}(Q(0)))$ is positive and strictly less then $N$, since
$Q(0)$ is a singular and not trivial matrix. It follows easily that
there exists $k_0\le N$ such that $W_{k_0}=W_{k_0+1}$. We claim that
$W_{k_0}=\{0\}$. Let $\xi\in W_{k_0}$. Then, $Q(0)(B^*)^{j}\xi=0$
for any $j=0,\ldots,k_0+1$. It follows that $B^*\xi\in W_{k_0}$ and,
consequently, $W_{k_0}$ is a $B^*$-invariant subspace of ${\rm
Ker}(Q(0))$. Therefore, $W_{k_0}\subset W=\{0\}$ and we are done.

\noindent{\em $(ii)\Leftrightarrow (iv)$:} Let us fix $t>0$,
$x\in\R^N$ and let $\xi\in\R^N$ be such that $\langle
Q_t(x)\xi,\xi\rangle=0$. This implies that $\langle
e^{sB}Q(x)e^{sB^*}\xi,\xi\rangle=0$ for any $s\in [0,t]$. Hence,
$Q(x)e^{sB^*}\xi=0$ for any $s$ as above. Since
\begin{equation*}
Q(x)e^{sB^*}\xi=\sum_{k=0}^{+\infty}\frac{s^k}{k!}Q(x)(B^*)^k\xi,\qq\;\,s\in
[0,t],
\end{equation*}
\pn $Q(x)e^{sB^*}\xi=0$ if and only if $Q(x)(B^*)^k\xi=0$ for any
$k\in\N_0$, that is if and only if $\xi\in W$. The equivalence
between (ii) and (iv) follows immediately.

{\noindent $(iii)\Leftrightarrow (v)$:} Let us fix $x\in\R^N$ and
denote by ${\mathscr F}^{(r)}_j(x)$ ($j=1,\ldots,N$) the rows of the
matrix ${\mathscr F}^{(r)}(x)$. Further, fix
$\xi_1,\ldots,\xi_N\in\R$ and set $\xi:=(\xi_1,\ldots,\xi_N)$. As it
is immediately checked, $\sum_{j=1}^N\xi_j{\mathscr F}_j^{(r)}(x)=0$
if and only if $\xi\in W_{r-1}$. Hence, the rows of the matrix
${\mathscr F}^{(r)}$ are linearly independent if and only if
$W_{r-1}=\{0\}$. From this, the equivalence between (iii) and (v)
clearly follows.
\end{proof}

The following lemma plays a crucial role in the proofs of Theorems
\ref{thm:3.2}.

\begin{lemma}
\label{lemma-crucial} Fix $l\ge 1$ and $m> c_{l-1,r}$. Then, for any
function $w\in C^{l+1}_b(\R^N)$ it holds that
\begin{equation}
[D^l_{\ell(m)},\langle B\cdot,D\rangle ]w=\sum_{k\in
A_m^{(l)}}{\mathscr J}_{k}^{(l)} D^l_kw, \label{series-B}
\end{equation}
\pn where the set $A_m^{(l)}$ is defined as follows: if
$d_{j_1},d_{j_2},\ldots,d_{j_k}$ $(1\le j_1<\ldots< j_k\le r)$
 are all the non-zero entries of
the vector $i_m^{(l)}=(0,d_1,\ldots,d_r)$, then
\begin{align}
A_m^{(l)}=\,&\Bigl\{s:
i_s^{(l)}=i_m^{(l)}-e_{j_1}^{(r+1)}+e_{j_1-1}^{(r+1)}-e_{j_i}^{(r+1)}+e_{h}^{(r+1)}\no\\
&\quad\quad\quad\quad\quad\quad\quad\quad\quad\quad\mbox{for some } i=2,\ldots, k \mbox{, and }h\le j_i+1\Bigr\}\no\\
&\cup
\left\{s: i_s^{(l)}=i^{(l)}_m-e_{j_1}^{(r+1)}+e_h^{(r+1)}\;\,\mbox{for some }h\le j_1\right\}\no\\
&\cup \Bigl\{s:
i_s^{(l)}=i_m^{(l)}-2e_{j_1}^{(r+1)}+e_{j_1-1}^{(r+1)}+e_h^{(r+1)}\no\\
&\quad\quad\quad\quad\quad\quad\quad\quad\quad\quad\mbox{for some
}h\le\min\{j_1+1,r\}, \mbox{ if }\alpha_{j_1}>1\Bigr\}, \label{Aml}
\end{align}
\pn where $e_h^{(r+1)}$ denotes the $h^\text{th}$ vector of the Euclidean
basis of $\R^{r+1}$. The entries of the matrices ${\mathscr
J}_k^{(l)}$ $(k\in A_m^{(l)})$ linearly depend only on the entries
of the matrix $B$. In particular, the matrix ${\mathscr
J}_{m}^{(l)}$ has full rank.
\end{lemma}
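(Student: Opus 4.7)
The plan is to reduce the computation to the elementary commutator identity $[D^\gamma,x_jD_i]=\gamma_j D^{\gamma-e_j+e_i}$ (an immediate consequence of the Leibniz rule), giving, for any multi-index $\gamma\in\N_0^N$,
\begin{equation*}
[D^\gamma,\langle B\cdot,D\rangle]w=\sum_{i,j=1}^N b_{ij}\,\gamma_j\,D^{\gamma-e_j+e_i}w.
\end{equation*}
I would then apply this to each entry $D^\beta w$ of $D^l_{\ell(m)}w$ (those $\beta$ with $|\beta|=i_{\ell(m)}^{(l)}$) and collect the resulting order-$l$ derivatives according to their block profile.

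By the block structure \eqref{matrix-B}, $b_{ij}=0$ whenever $i\in\II_h$, $j\in\II_k$ with $h>k+1$. Hence the output block profile $|\beta-e_j+e_i|$ is obtained from $|\beta|=i_{\ell(m)}^{(l)}=i_m^{(l)}-e^{(r+1)}_{j_1}+e^{(r+1)}_{j_1-1}$ by moving one unit from some block index $k$ with $\|\beta_k\|>0$ to a block index $h\le\min\{k+1,r\}$. The admissible $k$ are $k=j_1-1$, $k=j_1$ (only if $d_{j_1}\ge 2$), and $k=j_s$ for $s\ge 2$; translating each output back in terms of $i_m^{(l)}$ exactly reproduces the three parts of $A_m^{(l)}$ in \eqref{Aml}, and the linear dependence of the entries of each ${\mathscr J}^{(l)}_k$ on the entries of $B$ is then transparent from the commutator formula.

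The main obstacle is the assertion that ${\mathscr J}_m^{(l)}$ has full column rank. Inspection of the enumeration above shows that this block collects precisely the terms coming from $k=j_1-1$: the entry $({\mathscr J}_m^{(l)})_{\beta,\alpha}$ vanishes unless $\alpha=\beta-e_{j_*}+e_i$, where $j_*\in\II_{j_1-1}$ is the unique coordinate on which $\beta$ has a positive entry and $i\in\II_{j_1}$; in that case it equals $b_{ij_*}$. To show that the kernel is trivial, I would fix the coordinates of $\beta$ outside $\II_{j_1-1}\cup\II_{j_1}$ and let $\gamma\in\N_0^{p_{j_1}}$ denote the restriction of $\beta$ to $\II_{j_1}$ (of total degree $d_{j_1}-1$). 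The kernel equation then reads
\begin{equation*}
\sum_{i\in\II_{j_1}} b_{ij_*}\,a_{\gamma+e_i}=0\qquad\text{for every }j_*\in\II_{j_1-1}\text{ and every admissible }\gamma,
\end{equation*}
which is exactly $B_{j_1}^*v_\gamma=0$ with $v_\gamma:=(a_{\gamma+e_i})_{i\in\II_{j_1}}\in\R^{p_{j_1}}$. Since $B_{j_1}$ has rank $p_{j_1}\le p_{j_1-1}$ by Hypothesis \ref{ipos-1}(ii), $B_{j_1}^*$ is injective, so $v_\gamma=0$ for every admissible $\gamma$. As every $\alpha$ with $|\alpha|=i_m^{(l)}$ can be written as $\gamma+e_i$ for some such $\gamma$ and $i\in\II_{j_1}$ (recall $d_{j_1}\ge 1$), one concludes $a_\alpha=0$ for all such $\alpha$, establishing that ${\mathscr J}_m^{(l)}$ has trivial kernel.
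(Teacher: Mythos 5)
Your proposal is correct. The first part (the Leibniz identity $[D^\gamma,x_jD_i]=\gamma_jD^{\gamma-e_j+e_i}$, the observation that only pairs $j\in\II_{j_1-1}$, $i\in\II_{j_1}$ can map the block profile $i_{\ell(m)}^{(l)}$ to $i_m^{(l)}$, and the resulting enumeration of $A_m^{(l)}$) coincides with the paper's computation. Where you genuinely diverge is the full-rank assertion. The paper first reduces to the case where $B$ has only the sub-diagonal blocks and $i_{\ell(m)}^{(l)}$ has a special form, and then reorders $D^l_mw$ and $D^l_{\ell(m)}w$ so that $\mathscr{J}_m^{(l)}$ becomes block lower triangular with diagonal blocks $[B_{j_1}^*]_{-n(\gamma)}$ (truncations of $B_{j_1}^*$); the delicate point there is that each block-$m$ multi-index arises as $\gamma^{+i}$ for several $\gamma$, which forces the column-dropping bookkeeping. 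You avoid this entirely by arguing on the kernel: for fixed $\gamma$ (and fixed coordinates outside $\II_{j_1-1}\cup\II_{j_1}$), the rows indexed by the $p_{j_1-1}$ choices of $j_*$ yield precisely $B_{j_1}^*v_\gamma=0$, injectivity of $B_{j_1}^*$ (from ${\rm rank}(B_{j_1})=p_{j_1}$, Hypothesis \ref{ipos-1}(ii)) gives $v_\gamma=0$, and surjectivity of $(\gamma,i)\mapsto\gamma+e_i$ onto the block-$m$ multi-indices concludes. Both arguments rest on the same linear-algebraic input, but yours is shorter and dispenses with the explicit triangular normal form; the paper's version, in exchange, exhibits the structure of $\mathscr{J}_m^{(l)}$ explicitly.
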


\begin{proof}
 By using the chain rule and by taking the structure of the matrix $B$ in
\eqref{matrix-B} into account, it is easy to see that for any
multi-index $\alpha\in\N_0^N$ we have
\begin{align}
([D^{\alpha},\langle B\cdot,D\rangle
]w)(x)&=\sum_{i,j=1}^N\sum_{{\beta\le\alpha}\atop{\|\beta\|=1}}
\binom{\alpha}{\beta}
b_{ij}D^{\beta}x_jD^{\alpha-\beta}D_iw(x)\no\\
&= \sum_{i,j=1}^N\sum_{s=0}^r\sum_{\tau\in
\II_s}\langle\alpha,e^{(N)}_{\tau}\rangle
b_{ij}D_{\tau}x_jD^{\alpha-e^{(N)}_{\tau}
+e^{(N)}_i}w(x)\no\\
&= \sum_{s=0}^r\sum_{\tau\in
\II_s}\sum_{h=0}^{\min\{s+1,r\}}\sum_{i\in
\II_h}\langle\alpha,e^{(N)}_{\tau}\rangle
b_{i\tau}D^{\alpha-e^{(N)}_{\tau}+e^{(N)}_i}w(x), \label{lemmaA.8-1}
\end{align}
\pn for any $x\in\R^N$. By definition  we
have
$i_{\ell(m)}=(0,\ldots,0,1,d_{j_1}-1,\ldots,d_{j_2},\ldots,d_{j_k},0,\ldots,0)$.
 In \eqref{lemmaA.8-1} consider all the possible multi-indices
$\alpha\in \N_0^N$ with $|\alpha|=i_{\ell(m)}$. We see immediately
that $[D^l_{\ell(m)},\langle B\cdot,D\rangle ]w$ is given by the
right-hand side of \eqref{series-B} for some matrices ${\mathscr
J}^{(l)}_k$ ($k\in A_m$) and $\mathscr{J}_m^{(l)}$. It remains to
show that the matrix ${\mathscr J}^{(l)}_m$ has full rank which
equals the number of its columns. We split the rest of the proof in
two steps. \med

{\em Step 1.} First, we show that we can make some reduction. More
precisely, we show that, without loss of generality, we can limit
ourselves to prove the assertion for a generic smooth function $w$
when:
\begin{enumerate}[\rm(i)]
\item
the only non-trivial blocks of the matrix $B$ in \eqref{matrix-B}
are $B_1\ldots,B_r$;
\item[(ii)]
$i_{\ell(m)}=(0,\dots,0,1, d_{j_1}-1,0,\dots,0)$.
\end{enumerate}
As a straightforward computation shows, the entries of the matrix
${\mathscr J}^{(l)}_m$ depend only on the matrices $B_1,\ldots,B_r$.
Hence, we can assume (i). This implies that formula
\eqref{lemmaA.8-1} can be rewritten as follows:
\begin{equation}
[D^{\alpha},\langle B\cdot,D\rangle ]w =
\sum_{s=0}^{r-1}\sum_{\tau\in \II_s}\sum_{i\in
\II_{s+1}}\langle\alpha,e^{(N)}_{\tau}\rangle
b_{i\tau}D^{\alpha-e^{(N)}_{\tau}+e^{(N)}_i}w.
\label{lemmaA.8-1-bis}
\end{equation}
But if we have here $|\alpha|=i_{\ell(m)}$, then the only
possibilities to obtain a multi-index
$\alpha-e^{(N)}_{\tau}+e^{(N)}_i$ having the block form $i_m$, are
exactly the choices $\tau\in \II_{j_{1}-1}$ and $i\in \II_{j_1}$. So
if we split
$$
i_{\ell(m)}=(0,\dots,0,1, d_{j_1}-1,0,\dots,0)+(0,\dots,0,0,0,d_{j_1+1},\dots),
$$
and, accordingly $\alpha=\beta+\gamma$ with $|\beta|\hs{1}=\hs{1}(0,\dots,0,1,
d_{j_1}-1,0,\dots,0)$ and
$|\gamma|\hs{1}=\hs{1}(0,\dots,0,0,0,d_{j_1+1},\dots)$, we see
\begin{eqnarray*}
[D^{\alpha},\langle B\cdot,D\rangle ]w =[D^{\beta}D^\gamma,\langle
B\cdot,D\rangle ]w =\sum_{\tau\in \II_{j_1-1}}\sum_{i\in
\II_{j_1}}\langle\alpha,e^{(N)}_{\tau}\rangle
b_{i\tau}D^{\beta-e^{(N)}_{\tau}+e^{(N)}_i}D^\gamma w+\cdots,
\end{eqnarray*}
where we haven't written out the terms, which do not contribute to
${\mathscr J}^{(l)}_m$. This means we that we can argue for the
function $D^\gamma w$ hence assuming (ii), and the general case will
follow, as well.

\med

{\em Step 2.} Let us take a derivative $D^{\alpha}$ in the block
$D^l_{\ell(m)}$. Then, there exists an index $\tau\in \II_{j_1-1}$
such that $\langle\alpha,e^{(N)}_\tau\rangle=1$. Taking formula
\eqref{lemmaA.8-1-bis} into account, it is immediate to see that
\begin{align}
[D^{\alpha},\langle B\cdot,D\rangle ]w&= \sum_{i\in
\II_{j_1}}b_{i\tau}D^{\alpha-e^{(N)}_\tau+e^{(N)}_i}w +\ldots\ldots\no\\
&= [B_{j_1}^*D^1_{j_1}D^{\alpha-e^{(N)}_\tau}w]_\tau+\ldots\ldots =
{\mathscr K}_{\alpha}(w)+\ldots\ldots, \label{formula}
\end{align}
where $[\,\cdot\, ]_\tau$ and  ``\ldots\ldots'' denote,
respectively, the $\tau^{\text{th}}$ component of the vector in
brackets and terms which depend on (some of) the $l^{\text{th}}$
derivatives of $w$ that are in a block different from $D^l_{m}$.
Finally, we recall that $D^1_{j_1}z$ denotes the vector of the first
order derivatives $D_hz$ of the function $z$, with $h\in \II_{j_1}$.
We are interested exclusively in ${\mathscr K}_{\alpha}(w)$, because
only this  term will contribute to $\mathscr{J}^{(l)}_m$. In the
following, we are going to reorder the vectors $D^l_{m}$ and
$D^l_{\ell(m)}$ in such a way that the assertion about the rank of
$\mathscr{J}^{(l)}_m$ will be clear.

\med\pn Order the set $\Gamma\subset\N_0^{p_{j_1}}$ of
multi-indices of length $d_{j_1}-1$ by $\prec$
anti-lexicographically. That is we have
\begin{align*}
(d_{j_1}-1,0,\ldots,0)\prec (d_{j_1}-2,1,\ldots,0)\prec\dots \prec
(0,\ldots,0,d_{j_1}-1).
\end{align*}
Next, we introduce the set
\begin{align*}
\Lambda:=\Bigl\{\lambda_{i,\gamma}:=(0,\ldots,0,e^{(p_{{j_1}-1})}_i,
\underbrace{\,\;\;\;\gamma\,\;\;\;}_{{j_1}^{\rm th}{\rm
~block}},0,\ldots,0):~i=1,\ldots,
p_{{j_1}-1},~\gamma\in\Gamma\Bigr\},
\end{align*}
which we order again anti-lexicographically, still denoting the ordering  by
$\prec$. The set $\Lambda$ describes the possible multi-indices
having block from $i^{(l)}_{\ell(m)}$. Reorder the vector
$D^l_{\ell(m)}w$ according to this ordering. Now pick $\gamma\in \Gamma$. By
considering multi-indices
\begin{equation*}
\gamma^{+i}:=(0,\ldots,0,\underbrace{\gamma+e^{(p_{j_1})}_i}_{{j_1}^{\rm
th}{\rm ~block}},0,\ldots,0),\quad i=1,\ldots, p_{j_1},
\end{equation*}
we recover \emph{all} the multi-indices of block form $i^{(l)}_m$,
but most of them even \emph{several} times. For a $\gamma\in
\N_0^{p_{j_1}}$ let $n(\gamma)$ denote the smallest non-negative integer
$n$ such that for all $n+1<k\leq p_{j_1}$ we have $\gamma_k=0$. We
have, for instance, $n((d_{j_1}-1,0,\ldots,0))=0$ (only in this case
is $n(\gamma)=0$), $n((d_{j_1}-1,1,0,\ldots,0))=1$ and
$n((0,\ldots,0,d_{j_1}-1))=p_{j_1}-1$. Consider now a multi-index
$\gamma\in \Gamma$ and all the multi-indices $\gamma^{+i}$,
$i=1,\dots,p_{j_1}$. Precisely for $i=1,\dots, n(\gamma)$ we obtain
multi-indices $\beta$ which can be written both as $\gamma^{+i}$ and $\gamma'^{+i'}$ for some
$\gamma'\prec \gamma$ and for some
$1\leq i'\leq p_{j_1}$. We set
$D_{m}^{\gamma}w:=(D^{\gamma^{+(n(\gamma)+1)}}w,D^{\gamma^{+(n(\gamma)+2)}}w,\ldots,
D^{\gamma^{+p_{j_1}}}w)^{\top}$. If $\gamma_1\prec\gamma_2 \prec \ldots$ is
an enumeration of $\Gamma$, we have now that
$(D_{m}^{\gamma_1}w,D_m^{\gamma_2}w,\dots)^{\top}$
is a reordering of
$D_m^lw$. Further we set
$\mathscr{H}_{\gamma}(w):=(\mathscr{K}_{\lambda_{1,\gamma}}(w),\mathscr{K}_{\lambda_{2,\gamma}}(w),
\ldots,\mathscr{K}_{\lambda_{p_{j_1}-1},\gamma}(w))^{\top}$.
 From \eqref{formula} we conclude that
\def\matrixstrut{\rule[-1em]{0pt}{2.5em}}
\begin{equation*}
\begin{pmatrix}
\matrixstrut \mathscr{H}_{\gamma_1}(w)\\
\matrixstrut \mathscr{H}_{\gamma_2}(w)\\
\matrixstrut\vdots\\
\matrixstrut \mathscr{H}_{\gamma_k}(w)\\
\matrixstrut\vdots\\
\end{pmatrix}=
\begin{pmatrix}
\matrixstrut[B_{j_1}^*]_{-n(\gamma_1)}&\ldots&\ldots&0&\ldots &0\\
\matrixstrut\star&[B_{j_1}^*]_{-n(\gamma_2)}&0&\ldots &\ldots &0\\
\matrixstrut\vdots&\hskip-2em\ddots&\hskip-2em\ddots&\hskip-2em\ddots&\\
\matrixstrut\star&\ldots&\star&[B_{j_1}^*]_{-n(\gamma_k)}&0\\
\matrixstrut\vdots&\ldots&\ldots&\ddots&\ddots&\ddots\\
\end{pmatrix}
\begin{pmatrix}
\matrixstrut D_{m_{}}^{\gamma_1}w\\
\matrixstrut D_{m_{}}^{\gamma_2}w\\
\matrixstrut\vdots\\
\matrixstrut D_{m_{}}^{\gamma_k}w\\
\matrixstrut\vdots\\
\end{pmatrix},
\end{equation*}
where $[B^*_{j_1}]_{-s}$ denotes the matrix obtained from
$B^*_{j_1}$ by dropping out the first $s$ columns. The block matrix
above is block-lower triangular has full rank, as all its blocks on the diagonal do so, and
its rank is equal to the number of its columns. Thus
$\mathscr{J}^{(l)}_m$ which is similar to the above block matrix,
has the asserted properties.
\end{proof}

The following two lemmas are used in the forthcoming proof of the
maximum principle of Proposition \ref{prop:maxprinc:0}.

\begin{lemma}\label{lemma:approx-B} For the first-order differential operator ${\mathscr B}$, formally defined
by the equality ${\mathscr B}u(x)=\langle Bx,Du(x)\rangle$ for any
$x\in\R^N$ and any $u\in C(\R^N)$, where $Du$ is meant in the sense
of distributions, the following hold:
\begin{enumerate}[\rm(i)]
\item
For any $u\in BUC(\R^N)$ such that ${\mathscr B}u\in C(\R^N)$, there
exists a sequence $\{u_n\}$ of smooth functions, converging to $u$
uniformly in $\R^N$, such that ${\mathscr B}u_n\in C_b(\R^N)$ for
any $n\in\N$ and it converges to ${\mathscr B}u$ locally uniformly
in $\R^N$. In particular, if $u$ is compactly supported in $\R^N$,
then $u_n$ is compactly supported in ${\rm supp}(u)+\ov{B(1)}$, for
any $n\in\N$.
\item
For any $u\in BUC([0,+\infty[\times\R^N)$ such that $D_tu-{\mathscr
B}u\in C(]0,+\infty[\times\R^N)$, where both $D_tu$ and $Du$ are
meant in the sense of distributions, there exists a sequence
$\{u_n\}$ of smooth functions, converging to $u$ uniformly in
$[0,+\infty[\times\R^N$, such that $D_tu_n-{\mathscr B}u_n\in
C_b([0,+\infty[\times\R^N)$ for any $n\in\N$ and it converges to
$D_tu-{\mathscr B}u$ locally uniformly in $]0,+\infty[\times\R^N$.
In particular, if $u$ is compactly supported in
$]0,+\infty[\times\R^N$, then ${\rm supp}(u_n)$ is compact and
contained in a compact set, which is independent of $n$.
\end{enumerate}
\end{lemma}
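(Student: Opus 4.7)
The plan is to prove the lemma by a Friedrichs-type mollification, taking care that the unbounded coefficients of $\mathscr B$ do not destroy the boundedness of $\mathscr B u_n$. Let $\rho\in C_c^\infty(\R^N)$ be non-negative with $\int_{\R^N}\rho=1$ and $\mathrm{supp}\,\rho\subset\overline{B(1)}$, and set $\rho_n(y)=n^N\rho(ny)$. For compactly supported $u$ I would simply take $u_n:=\rho_n\ast u$: it is smooth with $\mathrm{supp}\,u_n\subset\mathrm{supp}\,u+\overline{B(1/n)}\subset\mathrm{supp}\,u+\overline{B(1)}$ (which gives the ``in particular'' part of the statement), it converges to $u$ uniformly on $\R^N$, and because $Du_n$ has compact support the function $\mathscr Bu_n=\langle Bx,Du_n\rangle$ is a continuous compactly supported function, hence in $C_b(\R^N)$.

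For the locally uniform convergence $\mathscr Bu_n\to\mathscr Bu$, the decisive step is the algebraic identity (obtained by splitting $Bx=B(x-y)+By$ in the convolution integral and integrating by parts in $y$ in the second piece)
\begin{equation*}
\mathscr Bu_n(x)=(\rho_n\ast\mathscr Bu)(x)+\Tr(B)\,u_n(x)+R_n(x),\qq R_n(x)=\int_{\R^N}u(x-y)\langle D\rho_n(y),By\rangle\,dy.
\end{equation*}
The rescaling $y=z/n$ gives $R_n(x)=\int_{\R^N}u(x-z/n)\langle D\rho(z),Bz\rangle\,dz$, and the divergence theorem applied to the compactly supported vector field $z\mapsto\rho(z)Bz$ yields $\int_{\R^N}\langle D\rho(z),Bz\rangle\,dz=-\Tr(B)$; hence $R_n\to -\Tr(B)\,u$ locally uniformly while $|R_n|$ stays bounded uniformly in $(n,x)$. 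Combined with $\rho_n\ast\mathscr Bu\to\mathscr Bu$ locally uniformly (continuity of $\mathscr Bu$) and the uniform convergence $\Tr(B)\,u_n\to\Tr(B)\,u$, this gives the claim. For a general $u\in BUC(\R^N)$ I would precompose with a cutoff $\eta_n(x)=\eta(x/n)$, $\eta\in C_c^\infty(\R^N)$, $\eta\equiv 1$ on $\overline{B(1)}$, $\mathrm{supp}\,\eta\subset B(2)$, and set $u_n:=\rho_n\ast(\eta_n u)$: then $u_n$ is smooth with compact support so $\mathscr Bu_n\in C_b(\R^N)$, and on any fixed ball $B(R)$, as soon as $B(R)+\overline{B(1/n)}\subset B(n)$ the cutoff is identically one, $u_n$ agrees with $\rho_n\ast u$ on $B(R)$, and the previous paragraph yields $u_n\to u$ and $\mathscr Bu_n\to\mathscr Bu$ on $B(R)$.

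The main obstacle I anticipate is upgrading $u_n\to u$ from locally uniform to \emph{globally} uniform in $\R^N$ as stated, because for $|x|\gtrsim n$ the truncation $\eta_n u$ differs appreciably from $u$ and a multiplicative cutoff cannot give uniform convergence if $u$ does not vanish at infinity. The standard device for bypassing this is to replace the spatial cutoff by an averaging along the flow of $\mathscr B$, namely $T_hu(x):=h^{-1}\int_0^h u(e^{sB}x)\,ds$, which preserves the $BUC$-norm, satisfies $T_hu\to u$ uniformly as $h\to 0$, and has $\mathscr BT_hu=h^{-1}(u\circ e^{hB}-u)\in C_b$; a further spatial mollification, which by the identity above commutes with $\mathscr B$ up to a bounded term converging locally uniformly to zero, then produces the required smooth approximants without any truncation in $x$. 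Part~(ii) is handled by the same recipe with $D_t-\mathscr B$ in place of $\mathscr B$ on the space-time cylinder $[0,+\infty[\times\R^N$, using a product mollifier in $(t,x)$ and, when $u$ is compactly supported in $]0,+\infty[\times\R^N$, a small inward time-shift of the mollifier to ensure that $u_n$ remains supported in the open cylinder.
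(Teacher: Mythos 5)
Your treatment of the compactly supported case coincides with the paper's own proof: the paper also sets $u_n=u*\varrho_n$ and rests on the identity ${\mathscr B}u_n={\mathscr B}u*\varrho_n+\Tr(B)\,u_n+u*{\mathscr B}\varrho_n$, which is exactly your decomposition with $R_n=u*{\mathscr B}\varrho_n$; your evaluation of the limit of $R_n$ via rescaling and the divergence theorem is correct, and the sign $-\Tr(B)\,u$ is the right one (it is what makes the right-hand side of the identity converge to ${\mathscr B}u$). For part (ii) the paper extends $u$ by even reflection in $t$ before mollifying rather than shifting the mollifier inward in time; this is an immaterial difference. Note also that in this paper the lemma is only ever invoked for compactly supported functions (namely for $v=u\vartheta$, resp.\ $v=u\psi$, in the proof of Lemma \ref{lemma-utile-uniqueness}), so the case you handle first is the one that carries all the weight.

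Where you diverge is the general $u\in BUC(\R^N)$ case, and there your proposed repair has a genuine gap. The paper uses no cutoff at all: it takes $u_n=u*\varrho_n$ for arbitrary $u\in BUC(\R^N)$, so that $u_n\to u$ uniformly follows at once from uniform continuity, and the same identity gives ${\mathscr B}u_n\to{\mathscr B}u$ locally uniformly (the identity being justified for merely continuous $u$ by writing it for smooth uniform approximants $v_m$ and letting $m\to+\infty$, using that ${\mathscr B}v_m\to{\mathscr B}u$ in the sense of distributions). Your flow-averaging device, by contrast, does not deliver what you claim: the assertion that $T_hu\to u$ uniformly as $h\to0^+$ is false for general $u\in BUC(\R^N)$ with ${\mathscr B}u$ merely continuous. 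Indeed $|T_hu(x)-u(x)|\le\sup_{0\le s\le h}\|u\circ e^{sB}-u\|_{\infty}$, and since $|e^{sB}x-x|$ is of order $s|x|$ the uniform continuity of $u$ gives no uniform smallness; concretely, for $N=1$, $B=1$ and $u(x)=\sin(\log^2x)$ for large $x$ (a Lipschitz, hence $BUC$, function with $xu'(x)=2\log x\cos(\log^2x)$ unbounded) one checks that $\sup_x|u(e^sx)-u(x)|$ does not tend to $0$ as $s\to0^+$. The issue you are trying to address --- that plain mollification does not obviously make ${\mathscr B}u_n$ \emph{bounded} when ${\mathscr B}u$ is unbounded --- is real, and the paper in fact leaves it untouched; but the flow average is not a valid way out, and the safe course is to follow the paper's simple mollification and observe that only the compactly supported case is needed downstream.
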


\begin{proof} (i) For any $n\in\N$, let $u_n=u*\varrho_n$, where $\varrho_n=n^N\varrho(n\cdot)$,
$\varrho\in C^{\infty}_c(B(1))$ being a positive function with
$\|\varrho\|_{L^1(\R^N)}$ and ``$*$'' denotes the convolution
operator. As it is immediately checked, the function $u_n$ is smooth
and converges to $u$ uniformly in $\R^N$. Moreover, if $u$ is
compactly supported in $\R^N$, then each function $u_n$ is compactly
supported in ${\rm supp}(u)+\ov{B(1)}$.

To prove that ${\mathscr B}u_n$ converges to ${\mathscr B}u$ locally
uniformly in $\R^N$, we observe that
\begin{equation}
{\mathscr B}u_n={\mathscr B}u*\varrho_n+{\Tr}(B)u_n+u*{\mathscr
B}\varrho_n. \label{calB-n}
\end{equation}
\pn This is enough for our aims. Indeed, as it is immediately seen,
$u*{\mathscr B}\varrho_n$ converges to ${\Tr}(B)u$ uniformly in
$\R^N$. It follows that the right-hand side of \eqref{calB-n} tends
to ${\mathscr B}u$ as $n\to +\infty$, locally uniformly in $\R^N$.

Formula \eqref{calB-n} is immediately checked in the particular case
when $u\in C^1_b(\R^N)$, by means of a straightforward computation,
based on an integration by parts. To prove it for any $u\in
C_b(\R^N)$, it suffices to write it with $u_n$ and $u$ being
replaced, respectively, by $u_n^m=v_m*\varrho_n$ and $v_m$, where
$\{v_m\}$ is a sequence of smooth functions converging to $u$
uniformly, and then take the pointwise limit as $m\to +\infty$.
Indeed, it is immediate to check that $u_n^m$, ${\mathscr B}u_n^m$
and $v_m*{\mathscr B}\varrho_n$ converge, respectively, to $u_n$,
${\mathscr B}u_n$ and $u*{\mathscr B}\varrho_n$, locally uniformly
in $\R^N$, as $m\to +\infty$. Moreover, since ${\mathscr B}u^m$
converges to ${\mathscr B}u$ in the sense of distributions, then
${\mathscr B}u^m*\varrho_n$ converges to ${\mathscr B}u*\varrho_n$
pointwise in $\R^N$ as $m\to+\infty$.

(ii) The proof is similar to the previous one. We extend $u$ to
$]-\infty,0[\times\R^N$, by setting $\tilde u(t,x)=u(-t,x)$ for such
$(t,x)$'s. Next, we approximate $\tilde u$ by the sequence $\{u_n\}$
defined by taking the convolution of $\tilde u$ with a standard
sequence $\{\varrho_n\}$ of mollifiers in $\R^{N+1}$. Using the same
approximation argument as in the proof of part (i), one can show
that $D_t\tilde u_n-{\mathscr B}\tilde u_n=(D_t u-{\mathscr
B}u)*\varrho_n-{\Tr}(B)u_n -u*{\mathscr B}\varrho_n$, in
$[a,+\infty)\times\R^N$ for any positive number $a$ such that
$na>1$. Letting $n\to +\infty$, it is easy to check that $D_t\tilde
u_n-{\mathscr B}\tilde u_n$ converges to $D_tu-{\mathscr B}u$
locally uniformly in $]0,+\infty[\times\R^N$.
\end{proof}

\begin{lemma}
\label{lemma-utile-uniqueness} The following hold true:
\begin{enumerate}[\rm(i)]
\item
Let $u\in C(\R^N)$ be such that ${\mathscr B}u\in C(\R^N)$, where
${\mathscr B}u$ is meant in the sense of distributions. If
$x_0\in\R^N$ is a maximum $($resp. minimum$)$ point of $u$, then
$({\mathscr B}u)(x_0)=0$.
\item
Let $u\in C(]0,\timeT[\times\R^N)$ be such that $D_tu-{\mathscr
B}u\in C(]0,\timeT[\times\R^N)$, where $D_tu$ and ${\mathscr B}u$
are meant in the sense of distributions. If $(t_0,x_0)\in
]0,\timeT[\times\R^N$ is a maximum $($resp. minimum$)$ point of $u$,
then $(D_tu-{\mathscr B}u)(t_0,x_0)=0$.
\end{enumerate}
\end{lemma}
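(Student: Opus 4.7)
The plan is to reduce both parts to the smooth case via the approximation scheme provided by Lemma \ref{lemma:approx-B}, and then to exploit the fact that ${\mathscr B}$ (respectively, $D_t-{\mathscr B}$) is a directional derivative along the characteristic curves $s\mapsto e^{sB}x_0$ (respectively, $s\mapsto (t_0+s,e^{-sB}x_0)$) of the corresponding first-order operator. By symmetry I treat only the case of a maximum point.

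For (i), I first reduce to a compactly supported function. Choose $\chi\in C^\infty_c(\R^N)$ with $\chi\equiv 1$ on a neighborhood of the orbit segment $\{e^{sB}x_0:|s|\le\delta\}$, for some small $\delta>0$. Then $v:=\chi u$ belongs to $C_c(\R^N)$, has a local maximum at $x_0$ with value $u(x_0)$, and satisfies ${\mathscr B}v=\chi\,{\mathscr B}u+u\,{\mathscr B}\chi\in C(\R^N)$ (the product rule holding in the distributional sense because $\chi$ is smooth). Lemma \ref{lemma:approx-B}(i) yields smooth compactly supported $v_n$ with $v_n\to v$ uniformly in $\R^N$ and ${\mathscr B}v_n\to{\mathscr B}v$ locally uniformly. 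For each smooth $v_n$, the ordinary chain rule together with the fundamental theorem of calculus gives
\begin{equation*}
v_n(e^{sB}x_0)-v_n(x_0)=\int_0^s({\mathscr B}v_n)(e^{\sigma B}x_0)\,d\sigma,\qquad s\in\R,
\end{equation*}
and passing to the limit $n\to+\infty$ (using uniform convergence of $v_n$ and, on the compact set $\{e^{\sigma B}x_0:|\sigma|\le|s|\}$, of ${\mathscr B}v_n$) we obtain
\begin{equation*}
v(e^{sB}x_0)-v(x_0)=\int_0^s({\mathscr B}v)(e^{\sigma B}x_0)\,d\sigma,\qquad |s|\le\delta.
\end{equation*}
Since $x_0$ is a maximum point of $v$, the left-hand side is non-positive. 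Dividing by $s$ and letting $s\to 0^+$ yields, by continuity of ${\mathscr B}v$, the inequality $({\mathscr B}v)(x_0)\le 0$; the analogous limit $s\to 0^-$ gives $({\mathscr B}v)(x_0)\ge 0$. Thus $({\mathscr B}v)(x_0)=0$, and since $\chi\equiv 1$ near $x_0$ we have $({\mathscr B}\chi)(x_0)=0$ and $({\mathscr B}v)(x_0)=({\mathscr B}u)(x_0)$, which gives the claim.

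For (ii), the argument runs in parallel. Multiplying by a cutoff $\chi\in C^\infty_c(]0,\timeT[\times\R^N)$ which equals $1$ on a neighborhood of the characteristic segment $\{(t_0+s,e^{-sB}x_0):|s|\le\delta\}\subset\, ]0,\timeT[\times\R^N$, we may extend the resulting function by zero to an element of $BUC([0,+\infty[\times\R^N)$ satisfying the hypotheses of Lemma \ref{lemma:approx-B}(ii); after smoothing, the chain rule gives $\frac{d}{ds}v_n(t_0+s,e^{-sB}x_0)=(D_tv_n-{\mathscr B}v_n)(t_0+s,e^{-sB}x_0)$, so that
\begin{equation*}
v_n(t_0+s,e^{-sB}x_0)-v_n(t_0,x_0)=\int_0^s(D_tv_n-{\mathscr B}v_n)(t_0+\sigma,e^{-\sigma B}x_0)\,d\sigma.
\end{equation*}
Letting $n\to+\infty$ and arguing as in part (i), with $|s|\le\delta$ to stay inside the domain, we deduce $(D_tu-{\mathscr B}u)(t_0,x_0)=0$.

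The only potential friction is the bookkeeping around the cutoff, namely checking that $\chi u$ preserves the distributional regularity of ${\mathscr B}u$ (respectively $D_tu-{\mathscr B}u$) and that, near $x_0$ (respectively $(t_0,x_0)$), the values of these operators applied to $\chi u$ coincide with those applied to $u$; both follow from the Leibniz rule for distributions against smooth functions, and from the fact that $\chi\equiv 1$ on a neighborhood of the point in question.
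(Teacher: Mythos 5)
Your proof is correct, but its mechanism differs from the paper's in an interesting way. The paper (following Lunardi's Proposition 3.1.10) multiplies $u$ by a cut-off $\vartheta$ with a \emph{strict} maximum at $x_0$, mollifies via Lemma \ref{lemma:approx-B}, picks global maximum points $x_n$ of the smooth approximants $v_n$, uses that $Dv_n(x_n)=0$ at an interior maximum so that $({\mathscr B}v_n)(x_n)=\langle Bx_n,Dv_n(x_n)\rangle=0$, and then needs the strictness of the maximum of $\vartheta$ to force $x_n\to x_0$ along a subsequence before passing to the limit. You instead integrate ${\mathscr B}v_n$ along the integral curve $s\mapsto e^{sB}x_0$ of the drift field (resp. $s\mapsto(t_0+s,e^{-sB}x_0)$ in the parabolic case), pass to the limit in the resulting identity using the uniform convergence of $v_n$ and the locally uniform convergence of ${\mathscr B}v_n$ on the compact orbit segment, and then read off the sign of the two one-sided derivatives of $s\mapsto v(e^{sB}x_0)$ at the maximum. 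This buys you something slightly stronger than the lemma: the limiting integral identity shows that $u$ is actually differentiable along the characteristic flow with derivative ${\mathscr B}u$, and it dispenses with both the strict-maximum cut-off and the subsequence extraction (an arbitrary cut-off equal to $1$ near the orbit segment suffices, since then $D\chi$ and $D_t\chi$ vanish at the base point and ${\mathscr B}(\chi u)$ agrees with ${\mathscr B}u$ there). The price is the (correctly handled) bookkeeping of the Leibniz rule for $\chi u$ in the distributional sense and the verification that the orbit segment stays in the region where $\chi\equiv1$ and, in case (ii), inside $]0,\timeT[\times\R^N$. Both arguments rest on the same approximation Lemma \ref{lemma:approx-B}, so the overall structure is parallel; only the exploitation of the maximum differs.
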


\begin{proof}
(i) We adapt the proof of \cite[Proposition 3.1.10]{Lu1} to our
situation. Without loosing in generality we can assume that $x_0$ is
a maximum point of $u$ and $u(x_0)>0$. Indeed, if $x_0$ is a minimum
point, it suffices to replace the function $u$ by $-u$. Similarly,
if $x_0$ is a maximum point and $u(x_0)<0$, then the function
$u-2u(x_0)$ has at $x_0$ a positive maximum.

Let $R>0$ be such that $u(x)\le u(x_0)$ for any $x\in x_0+B(R)$.
Further, let $\vartheta\in C^{\infty}_c(x_0+B(R))$ satisfy
$\vartheta(x)<\vartheta(x_0)$ for any $x\in x_0+B(R)$ such that
$x\neq x_0$. As it is immediately seen, the function $v=u\vartheta$
is compactly supported in $x_0+B(R)$ and
 assumes its maximum value only at $x_0$. A straightforward computation shows that ${\mathscr
B}v\in BUC(\R^N)$. Let now $v_n$ be a sequence of smooth functions
compactly supported in $x_0+B(R+1)$, converging to $v$ uniformly in
$\R^N$ and such that ${\mathscr B}v_n$ converges to ${\mathscr B}v$
locally uniformly in $\R^N$, whose existence is guaranteed by Lemma
\ref{lemma:approx-B}(i). Without loss of generality, we can also
assume that $\sup_{\R^N} v_n>0$ for any $n\in\N$. Let
$\{x_n\}\subset x_0+B(R+1)$ be a sequence such that $\sup_{\R^N}
v_n=v_n(x_n)$ for any $n\in\N$. Up to a subsequence, we can assume
that $x_n$ converges in $\R^N$ to a maximum point of $v$. Hence, it
converges to $x_0$. To complete the proof, it suffices to observe
that $({\mathscr B}v_n)(x_n)=0$, for any $n\in\N$, and $({\mathscr
B}v)(x_0)=({\mathscr B}u)(x_0)=0$.

(ii) The proof can be obtained arguing as above, taking Lemma
\ref{lemma:approx-B}(ii) into account, and replacing the function
$\vartheta$, defined in (i), by a cut-off function $\psi\in
C^{\infty}_c(]0,\timeT[\times\R^N)$, compactly supported in
$[t_0-R^{-1},t_0+R^{-1}]\times x_0+B(R)$, for some $R>0$
sufficiently large, and such that $\psi(t,x)<\psi(t_0,x_0)=1$ for
any $(t,x)\in ]0,+\infty[\times\ x_0+B(R)$, with $(t,x)\neq
(t_0,x_0)$.
\end{proof}

We conclude this section with the proof of the maximum principle in
Proposition \ref{prop:maxprinc:0} and with the proof of Lemma
\ref{lem:propqh}.

\begin{proof}[Proof of Proposition $\ref{prop:maxprinc:0}$.] (i)
Let $\va(x)=1+|x|^2$ for any $x\in\R^N$, with $\lambda_0$
sufficiently large such that ${\mathscr A}\va-\lambda_0\va<0$ in
$\R^N$. The existence of such a $\lambda_0$ is guaranteed by our
assumptions on the growth of the coefficients of the operator
${\mathscr A}$ at infinity (see Remark \ref{rem:2.9}(i)).

Set $u_n=u-n^{-1}\va$. Suppose that $\lambda\ge\lambda_0$ and
$\lambda u-{\mathscr A}u=f$ for some $u$ and $f$ as in the statement
of the proposition. Then, $\lambda u_n-{\mathscr A}u_n< f$. Since
$u_n$ tends to $-\infty$ as $|x|\to +\infty$, then, for any
$n\in\N$, there exists $x_n\in\N$ such that $u_n(x_n)=\sup_{\R^N}
u_n$. Taking Lemma \ref{lemma-utile-uniqueness} into account, we can
easily show that ${\mathscr A}u_n(x_n)\le 0$. Hence, $\lambda
u_n(x_n)\le f(x_n)\le \|f\|_{\infty}$. Since
$\sup_{\R^N}u=\lim_{n\to +\infty}\sup_{\R^N}u_n$, it follows that
$\lambda v(x)\le \|f\|_{\infty}$ for any $x\in\R^N$. Applying the
same argument to $-u$ leads us to the assertion in the case when
$\lambda\ge \lambda_0$.

Finally, if $\lambda\in ]0,\lambda_0[$, we can rewrite the equation
$\lambda u-{\mathscr A}u=f$ as $\lambda_0u-{\mathscr A}u=g$, where
$g=f+(\lambda_0-\lambda)u$. Applying the estimate so far obtained
gives $\lambda_0\|u\|_{\infty}\le \|g\|_{\infty}\le
(\lambda_0-\lambda)\|u\|_{\infty}+\|f\|_{\infty}$, which leads us to
the assertion also in this situation. \med\pn (ii) The proof is
similar to the previous one. Suppose that $g\le 0$ in
$]0,\timeT[\times\R^N$ and introduce the function
$u_n:[0,\timeT]\times\R^N\to\R$ defined by
$u_n(t,x)=e^{-\lambda_0t}(u(t,x)-\sup_{\R^N}f)-n^{-1}\va(x)$, for
any $(t,x)\in [0,\timeT]\times\R^N$, where $\lambda_0$ and $\va$ are
as in the proof of (i). The function $u_n$ satisfies the equation
$D_tu_n-({\mathscr A}-\lambda_0)u_n< 0$ in $]0,\timeT[\times\R^N$
and $u_n(0,\cdot)\le 0$. Moreover, it attains its maximum value at
some point $(t_0,x_0)\in [0,\timeT]\times\R^N$. If $t_0=0$, then
$u_n(t,x)\le 0$ for any $(t,x)\in [0,\timeT]\times\R^N$. On the
other hand, if $t_0>0$ by elementary analysis and Lemma
\ref{lemma-utile-uniqueness}(ii), $(D_tu_n-{\mathscr
A}u_n)(t_0,x_0)\ge 0$. Therefore, $u_n\le 0$ in this case, as well.
Taking the limit as $n\to +\infty$ gives $e^{-\lambda_0
t}(u(t,x)-\sup_{\R^N}f)\le 0$ for any $(t,x)\in
[0,\timeT]\times\R^N$, and we are done.

To prove the assertion when $g\ge 0$, it suffices to apply this part
to $-u$. Finally, estimate \eqref{stimaapriori:1} follows
straightforwardly from these results.
\end{proof}

\begin{proof}[Proof of Lemma $\ref{lem:propqh}$]
\par\noindent (i)-(iii) Trivial from the definition.

\par\noindent (iv) Consider all the possible multi-indices $\beta$
which are of the form $\beta=\alpha-e^{(r+1)}_j+e^{(r+1)}_{j'}$ for
some $0\leq j,j'\leq r$ such that $\alpha_j>0$ and $j'\leq j+1$. We
get the largest value of $q_h(\beta)$, if actually $j'=j+1$ holds.
For this choice we have $q_h(\beta)=q_h(\alpha)+1$.

\par\noindent (v) Let $\alpha$ and $\beta=\alpha-e^{(r+1)}_{j_0}+e^{(r+1)}_{j_0-1}$
be as in the assertions. Since $\alpha_0=0$ and $\|\alpha\|>h$,
after dropping out $h$ ``derivatives'' from $\alpha$, starting from
the right, there will remain at least one positive entry which is
not at the $0^{\text{th}}$ position. This gives $q_h(\alpha)>1$.
Now, the equality $q_h(\beta)=q_h(\alpha)-1$ is clear from the
definition.

\par\noindent (vi) Observe that, by (v), $q_h(\hat\alpha)=q_h(\alpha)-1$
if $\|\alpha\|>h$. Now use (iii) to conclude $q_h(\beta)\leq
q_h(\hat\alpha)+1$ and finish the proof.

\par\noindent (vii) Let $\widetilde{\alpha}$ and $\alpha$ be as in
the assertion. By definition we have
$q_h(\beta)=q_h(\widetilde\alpha)+1$ if $\|\widetilde\alpha\|\geq
h$. If $\|\widetilde\alpha\|=h-1$, we have
$q_h(\widetilde\alpha)=0$, $\|\beta\|=h+1$ and $q_h(\beta)=1/2$. For
$\|\widetilde\alpha\|\leq h-2$ we have
$q_h(\widetilde\alpha)=q_h(\beta)=0$ (we have used (i)). So in all
cases we conclude $q_h(\widetilde\alpha)\geq q_h(\beta)-1$. The
inequality $q_h(\widetilde\alpha)\leq q_h(\alpha)$ is trivial, and
hence the proof is complete.
\end{proof}


\parindent0pt
\end{document}